\newtheorem{theorem}{Theorem}[section]
\newtheorem{lemma}[theorem]{Lemma}
\newtheorem{corollary}[theorem]{Corollary}
\newtheorem{proposition}[theorem]{Proposition}
\theoremstyle{definition}
\theoremstyle{remark}
\newtheorem{remark}[theorem]{Remark}
\numberwithin{equation}{section}
\begin{document}

\title{Character Formulae for Ortho-symplectic Lie Superalgebras $\mathfrak{osp}(n|2)$}

\author{Li Luo}
\address{Department of Mathematics, East China Normal University, Shanghai, 200241, China}
\email{lluo@math.ecnu.edu.cn}


\subjclass[2000]{Primary 17B10, 17B20; Secondary 17B55}



\keywords{ortho-symplectic superalgebra, character formula, tensor
module, cohomology.}

\begin{abstract}

The character formula of any finite dimensional irreducible module
$L_\lambda$ for Lie superalgebra $\mathfrak{osp}(n|2)$ is computed.
As a by-product, the decomposition of tensor module
$L_\lambda\otimes \mathbb{C}^{n|2}$, where $\mathbb{C}^{n|2}$ is the
natural representation, is obtained.

\end{abstract}

\maketitle



\section{Introduction}

\subsection{}
In his foundational papers \cite{K1, K2, K3}, Kac classified the
finite dimensional complex simple Lie superalgebras and developed a
character formula for the so-called typical irreducible
representations. However, it turned out to be one of the most
challenging problems in the theory of Lie superalgebras to find the
character formulae of the so-called atypical irreducible
representatios. In the case of type $A$, it was first solved by
Serganova in \cite{Se}, where a Kazhdan-Lusztig theory was developed
for $\mathfrak{gl}(m|n)$. Later on, using quantum group techniques,
the Kazhdan-Lusztig polynomials were computed quite directly by
Brundan in \cite{B}. Reworking Brundan's description, Su and Zhang
\cite{SZ} gave a very explicit closed character formula. Besides,
van der Jeugt \cite{V} constructed a character formula for all
finite dimensional irreducible representations of ortho-symplectic
superalgebra $\mathfrak{osp}(2|2n)$ (i.e. type $C$).

However, there is still no complete character formula for a II type
basic classical Lie superalgebra (i.e. types $B$ and $D$) up to this
stage. The purpose of the present paper is to find such complete
character formulae for simple Lie superalgebras
$\mathfrak{osp}(2m+1|2)$ and $\mathfrak{osp}(2m|2)$ (i.e. types
$B(m|1)$ and $D(m|1)$, respectively). Precisely, we will describe
the character of any irreducible module in terms of the characters
of generalized Verma modules, which have been known clearly. We hope
that our effort may help us to explore the more general cases of II
type basic classical Lie superalgebras in the near future.

\subsection{}
In the theory of finite dimensional simple Lie algebras, it is well
known that all finite dimensional irreducible representations except
the spin representations can be found in tensor module
$V^p=\overbrace{V\otimes V\otimes\cdots\otimes V}^p$ for some $p$,
where $V$ is the natural representation. Since there cannot be spin
representation in the super case, it is possible for us to study all
finite dimensional irreducible $\mathfrak{osp}(n|2)$-modules via the
tensor module $(\mathbb{C}^{n|2})^p$. In other words, we can study
an irreducible module $L_\lambda$ from $L_\mu\otimes
\mathbb{C}^{n|2}$ where $L_\mu$ is another irreducible module known
by induction.

However, the tensor module $L_\mu\otimes \mathbb{C}^{n|2}$ may not
be completely reducible in the super case. Thus we should firstly
study the blocks of irreducible modules appearing in $L_\mu\otimes
\mathbb{C}^{n|2}$, and then have a more detailed argument for these
blocks.

\subsection{}
In order to calculate the tensor module $L_\mu\otimes
\mathbb{C}^{n|2}$, we should express $L_\mu$ in terms of generalized
Verma modules $M_\nu$, whose character formulae have been known
clearly. Such expressions can help us use some results in classical
simple Lie algebras theory freely.

We also introduce an analogue of Kostant's
$\mathfrak{u}$-cohomology, which can help us not only to describe
the trivial module $L_0$ in terms of $M_\nu$ directly but also to
get much useful information about the tensor module $L_\mu\otimes
\mathbb{C}^{n|2}$. The induction in our argument depends on these
results.

\subsection{}
Our main results are Theorems 5.2, 5.3 and 5.5, which give the
complete character formulae for finite dimensional irreducible
$\mathfrak{osp}(n|2)$-modules.

The paper is organized as follows. In Section 2, we present some
background material on $\mathfrak{osp}(n|2)$. In Section 3, we
introduce an analogue of Kostant's $\mathfrak{u}$-cohomology, by
which we can construct the generalized Kazhdan-Lusztig polynomials
for finite dimensional irreducible $\mathfrak{osp}(n|2)$-modules and
give an explicit expression for the trivial module $L_0$ in terms of
generalized Verma modules. Section 4 is devoted to study the
information of tensor modules $M_\lambda\otimes \mathbb{C}^{n|2}$
and $L_\lambda\otimes \mathbb{C}^{n|2}$. The character formulae for
all finite dimensional irreducible $\mathfrak{osp}(n|2)$-modules are
obtained in Section 5.

\section{Preliminaries}
We explain some basic notions of Lie superalgebras
$\mathfrak{osp}(n|2)$ here and refer to \cite{K1,K2} for more
details. The ground field is the field $\mathbb{C}$ of complex
numbers.
\subsection{Lie superalgebras $\mathfrak{osp}(n|2)$}
Let $V=\mathbb{C}^{n|2}$ be the $\mathbb{Z}_2$-graded vector space
with even subspace $V_{\bar{0}}=\mathbb{C}^{n|0}$ and odd subspace
$V_{\bar{1}}=\mathbb{C}^{0|2}$. The associative algebra $\mbox{End}
\mathbb{C}^{n|2}$ becomes an associative superalgebra if we let
\begin{equation}
\mbox{End}_{\varsigma}\mathbb{C}^{n|2}=\{\xi\in\mbox{End}
\mathbb{C}^{n|2}\mid\xi V_\tau\subset V_{\varsigma+\tau}\},\quad
(\varsigma,\tau\in \mathbb{Z}_2).
\end{equation}
The bracket $[\xi,\eta]=\xi\eta-(-1)^{\deg\xi\deg\eta}\eta\xi$ makes
$\mbox{End} \mathbb{C}^{n|2}$ into a Lie superalgebra, which is
denoted by $\mathfrak{gl}(n|2)$.

Let $F$ be a non-degenerate bilinear form on $V=\mathbb{C}^{n|2}$
such that $V_{\bar{0}}$ and $V_{\bar{1}}$ are orthogonal and the
restriction of $F$ to $V_{\bar{0}}$ is a symmetric and to
$V_{\bar{1}}$ a skew-symmetric form. The ortho-symplectic Lie
superalgebra
$\mathfrak{osp}(n|2)=\mathfrak{osp}(n|2)_{\bar{0}}\oplus\mathfrak{osp}(n|2)_{\bar{1}}$
is defined by
\begin{equation}
\mathfrak{osp}(n|2)_\varsigma=\{\xi\in\mathfrak{gl}(n|2)_\varsigma\mid
F(\xi(x),y)=-(-1)^{\varsigma\deg x}F(x,\xi(y))\},\quad (\varsigma\in
\mathbb{Z}_2).
\end{equation}

In some basis the matrix of $F$ can be written as
\begin{equation}
\left( \begin{array}{cccc} 0 & 1_m &  & \\1_m & 0 &  &\\
&& 0 & 1\\&  & -1 & 0
\end{array}\right), (n=2m);\quad
\left( \begin{array}{ccccc} 0 & 1_m & 0 &  & \\1_m & 0 & 0 & &\\ 0 & 0& 1& & \\
& & & 0 & 1\\& & & -1 & 0
\end{array}\right), (n=2m+1)
\end{equation} from which the elements of $\mathfrak{osp}(n|2)$ can be written as matrices.

Subalgebra $\mathfrak{osp}(n|2)_{\bar{0}}$ consists of matrices of
the form $ \left(\begin{array}{cc} A_{n\times n} & 0  \\ 0&
D_{2\times 2}\end{array} \right); $ while
$\mathfrak{osp}(n|2)_{\bar{1}}$ consists of matrices of the form $
\left(\begin{array}{cc}  0& B_{n\times 2}\\ C_{2\times n}& 0
\end{array} \right). $

It is obvious that, as a Lie algebra,
$\mathfrak{osp}(n|2)_{\bar{0}}\cong\mathfrak{0}(n)\oplus\mathfrak{sl}(2)$.

Let $\mathfrak{h}$ be the subalgebra with all diagonal matrices in
$\mathfrak{osp}(n|2)$, which is called a {\em Cartan subalgebra} of
$\mathfrak{osp}(n|2)$.

The supertrace induces a bilinear form $(\ ,\ ):
\mathfrak{h}^*\times \mathfrak{h}^*\rightarrow \mathbb{C}$ on
$\mathfrak{h}^*$.

\subsection{Distinguished simple root system}
Unlike the classical simple Lie algebras, the dynkin diagram of
$\mathfrak{osp}(n|2)$ depends on the choice of Borel subalgebra. But
there is a unique dynkin diagram with only one odd simple root. Such
a dynkin diagram is called the distinguished Dynkin diagram. The
underlying simple root system is called distinguished simple root
system. We list them below (c.f. \cite{K1,K2}).

\subsubsection{$\mathfrak{osp}(2m|2)$}
\

The set of distinguished simple roots is
\begin{equation}
\Pi=\{\delta-\epsilon_1,
\epsilon_1-\epsilon_{2},\epsilon_2-\epsilon_3,\ldots,\epsilon_{m-1}-\epsilon_m,
\epsilon_{m-1}+\epsilon_m\},
\end{equation} and the set of the positive roots is
\begin{equation}
\Delta^+=\{\delta\pm \epsilon_i, \epsilon_j\pm\epsilon_k,
2\delta\mid 1\leq i\leq m, 1\leq j<k\leq m\}.
\end{equation}
Furthermore, the set of positive even roots and odd roots are
\begin{equation}
\Delta_0^+=\{\epsilon_{i}\pm\epsilon_{j}, 2\delta\mid 1\leq i<j\leq
m\}
\end{equation}
and
\begin{equation}
\Delta_1^+=\{\delta\pm\epsilon_i\mid 1\leq i\leq m\},
\end{equation}
respectively.

The distinguished Dynkin diagram is as follows:

\vspace{0.5cm} \setlength{\unitlength}{3pt}
\begin{picture}(108,6)\put(0,-1){$D(m|1)$:}
\put(21,-1){$\otimes$}\put(17,-5){$\delta-\epsilon_1$}\put(23,0){\line(1,0){12}}
\put(36,0){\circle{2}}\put(32,-5){$\epsilon_1-\epsilon_2$}
\put(42,-0.5){$\cdots$}\put(52,0){\circle{2}}\put(53,0){\line(1,0){12}}
\put(66,0){\circle{2}}\put(56,-5){$\epsilon_{m-2}-\epsilon_{m-1}$}
\put(44,-12){{\tt(Figure 1)}}
\put(66,0){\circle{2}}
\put(67,0){\line(2,1){12}}\put(67,0){\line(2,-1){12}}
\put(80,6){\circle{2}}\put(80,-6){\circle{2}}
\put(82,5){$\epsilon_{m-1}-\epsilon_m$}\put(82,-7){$\epsilon_{m-1}+\epsilon_m$}
\end{picture}
\vspace{1.5cm}

\subsubsection{$\mathfrak{osp}(2m+1|2)$}
\

The set of distinguished simple roots is
\begin{equation}
\Pi=\{\delta-\epsilon_1,
\epsilon_1-\epsilon_{2},\epsilon_2-\epsilon_3,\ldots,\epsilon_{m-1}-\epsilon_m,
\epsilon_m\},
\end{equation} and the set of the positive roots is
\begin{equation}
\Delta^+=\{\delta\pm \epsilon_i, \epsilon_j\pm\epsilon_k, 2\delta,
\delta, \epsilon_i\mid 1\leq i\leq m, 1\leq j<k\leq m\}.
\end{equation} Furthermore, the set of positive even roots and odd roots are
\begin{equation}
\Delta_0^+=\{\epsilon_{j}\pm\epsilon_{k}, 2\delta, \epsilon_i \mid
1\leq i\leq m, 1\leq j<k\leq m\}
\end{equation} and
\begin{equation}
\Delta_1^+=\{\delta\pm\epsilon_i, \delta\mid 1\leq i\leq m\},
\end{equation}
respectively.

The distinguished Dynkin diagram is as follows:

\begin{picture}(108,6)\put(0,-1){$B(m|1)$:}
\put(21,-1){$\otimes$}\put(17,-5){$\delta-\epsilon_1$}\put(23,0){\line(1,0){12}}
\put(36,0){\circle{2}}\put(32,-5){$\epsilon_1-\epsilon_2$}
\put(42,-0.5){$\cdots$}\put(52,0){\circle{2}}\put(53,0){\line(1,0){12}}
\put(66,0){\circle{2}}\put(58,-5){$\epsilon_{m-1}-\epsilon_{m}$}
\put(44,-12){{\tt(Figure 2)}}
\put(66,0){\circle{2}}\put(67,-0.25){\line(1,0){12}}\put(67,0.25){\line(1,0){12}}\put(80,0){\circle{2}}
\put(73,1.75){\line(3,-1){5.5}}\put(73,-1.75){\line(3,1){5.5}}\put(80,-5){$\epsilon_m$}
\end{picture}
\vspace{1.5cm}

In both of the cases $\mathfrak{osp}(2m|2)$ and
$\mathfrak{osp}(2m+1|2)$, the bilinear form $(\ ,\ )$ on
$\mathfrak{h}^*$ satisfies
\begin{equation}
(\delta,\delta)=-1, \quad
(\delta,\epsilon_i)=(\epsilon_i,\delta)=0,\quad
(\epsilon_i,\epsilon_j)=\delta_{ij}, \quad (1\leq i,j\leq m).
\end{equation}

\subsection{$\mathbb{Z}$-grading and parabolic subalgebras}

We shall simplify $\mathfrak{osp}(n|2)$ to $\mathfrak{g}$ throughout
the paper, where $n=2m$ or $2m+1$. The Lie superalgebra
$\mathfrak{g}$ admits a $\mathbb{Z}_2$-consistent
$\mathbb{Z}$-grading
\begin{equation}
\mathfrak{g}=\mathfrak{g}_{-2}\oplus\mathfrak{g}_{-1}\oplus\mathfrak{g}_0\oplus\mathfrak{g}_{1}\oplus\mathfrak{g}_{2},
\end{equation} where
$\mathfrak{g}_0\cong\mathfrak{0}(n)\oplus\mathbb{C}$ is spanned by
$\mathfrak{h}$ and $\mathfrak{g}_{\pm\alpha}
(\alpha\in\Delta_0^+\setminus{2\delta})$; $\mathfrak{g}_{\pm1}$ is
spanned by $\mathfrak{g}_{\pm\alpha} (\alpha\in\Delta_1^+)$; and
$\mathfrak{g}_{\pm2}=\mathfrak{g}_{\pm2\delta}$. There are two
parabolic subalgebras
\begin{equation}
\mathfrak{u}=\mathfrak{g}_{1}\oplus\mathfrak{g}_{2}\quad
\mbox{with}\quad \mathfrak{u}_{\bar{0}}=\mathfrak{g}_{2} \mbox{ and
} \mathfrak{u}_{\bar{1}}=\mathfrak{g}_{1}
\end{equation} and
\begin{equation}
\mathfrak{u}^-=\mathfrak{g}_{-1}\oplus\mathfrak{g}_{-2}\quad
\mbox{with}\quad \mathfrak{u}^-_{\bar{0}}=\mathfrak{g}_{-} \mbox{
and } \mathfrak{u}^-_{\bar{1}}=\mathfrak{g}_{-1}.
\end{equation}

\subsection{Dominant integral weights} An element in $\mathfrak{h}^*$ is called a {\em weight}.
A weight $\lambda\in\mathfrak{h}^*$ will be written in terms of the
$\delta\epsilon$-basis as
\begin{equation}
\lambda=(\lambda_0;\lambda_1,\lambda_2,\ldots,\lambda_m)=\lambda_0\delta+\sum_{i=1}^m
\lambda_i\epsilon_i.
\end{equation}
Define
\begin{equation}
\mbox{ht }\lambda=\sum_{i=0}^m |\lambda_i|,
\end{equation}
and call it the height of $\lambda$.

A weight $\lambda$ is called {\em integral} if
$\lambda_0,\lambda_1,\ldots,\lambda_m\in\mathbb{Z}$.

For any weight $\lambda\in\mathfrak{h}^*$, there exists an
irreducible module $L_\lambda$ with highest weight $\lambda$.

\begin{theorem}{\bf(Kac \cite{K1})} Take any integral weight $\lambda$.

(1) If $\mathfrak{g}=\mathfrak{osp}(2m|2)$, then the irreducible
module $L_{\lambda}$ is finite dimensional if and only if
\begin{equation}
\left\{\begin{array}{l}  \lambda_0,
\lambda_1,\ldots,\lambda_{m-1}\in \mathbb{Z}_{\geq0},\
\lambda_m\in \mathbb{Z},\\
\lambda_1\geq \lambda_2\geq\cdots\geq \lambda_{m-1}\geq|\lambda_m|,\\
\lambda_0\geq t,\quad \mbox{where $t$ is the maximal number such
that $\lambda_t\not=0$.}
\end{array}\right.
\end{equation}

(2) If $\mathfrak{g}=\mathfrak{osp}(2m+1|2)$, then the irreducible
module $L_{\lambda}$ is finite dimensional if and only if
\begin{equation}
\left\{\begin{array}{l}  \lambda_0,
\lambda_1,\ldots,\lambda_{m-1},\lambda_m\in \mathbb{Z}_{\geq0},\\
\lambda_1\geq \lambda_2\geq\cdots\geq \lambda_{m-1}\geq\lambda_m,\\
\lambda_0\geq t,\quad \mbox{where $t$ is the maximal number such
that $\lambda_t\not=0$.}
\end{array}\right.
\end{equation}
\end{theorem}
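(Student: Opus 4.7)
The plan is to establish necessity and sufficiency separately; the arguments for $D(m|1)$ and $B(m|1)$ run in parallel, so I treat them together.

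\emph{Necessity of the classical conditions.} Restrict $L_\lambda$ to the even part $\mathfrak{g}_{\bar 0}\cong\mathfrak{o}(n)\oplus\mathfrak{sl}(2)$: the $\mathfrak{g}_{\bar 0}$-cyclic submodule through a highest weight vector $v_\lambda$ is a finite-dimensional highest weight module for the Borel of $\mathfrak{g}_{\bar 0}$ induced from the distinguished Borel of $\mathfrak{g}$. The induced even simple roots are the standard simple roots of $\mathfrak{o}(n)$ together with $2\delta$, so classical dominance translates into integrality of the $\lambda_i$, the listed inequalities on $\lambda_1,\ldots,\lambda_m$, and $\lambda_0\geq 0$. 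To obtain the remaining super-theoretic inequality $\lambda_0\geq t$, apply odd negative root vectors of the form $e_{-\delta\pm\epsilon_i}$ (choosing signs so that the coordinates with $\lambda_i\neq 0$ are the ones hit) successively to $v_\lambda$, producing a vector $v^{(t)}$ of weight $\lambda-t\delta+\mu$ with $\mu$ supported on the $\epsilon_i$. A short commutator computation using $e_{2\delta}v_\lambda=0$ and the annihilation of $v_\lambda$ by every positive root vector shows $e_{2\delta}v^{(t)}=0$, so $v^{(t)}$ is an $\mathfrak{sl}(2)_{2\delta}$-highest weight vector with coroot eigenvalue $\lambda_0-t$. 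Finite-dimensionality of the $\mathfrak{sl}(2)$-submodule it generates then forces $\lambda_0\geq t$, provided $v^{(t)}\neq 0$ in $L_\lambda$.

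\emph{Sufficiency.} Given $\lambda$ satisfying all stated conditions, realize $L_\lambda$ as the irreducible subquotient of highest weight $\lambda$ inside a sufficiently large tensor power $(\mathbb{C}^{n|2})^{\otimes p}$ of the natural representation. Such a $p$ exists because the weights of $\mathbb{C}^{n|2}$ are $\pm\delta$ and $\pm\epsilon_i$ (plus $0$ when $n$ is odd), which span the weight lattice, and the dominance and height inequalities on $\lambda$ guarantee that $\lambda$ occurs as a highest weight in some constituent. Since $\mathbb{C}^{n|2}$ is finite-dimensional, so is this tensor power, and therefore so is its irreducible subquotient $L_\lambda$.

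\emph{Main obstacle.} The delicate point is verifying $v^{(t)}\neq 0$ in $L_\lambda$. This demands an ordered PBW argument inside the kernel $M_\lambda\twoheadrightarrow L_\lambda$ that exploits both the maximality of $t$ and the fact that odd root vectors square to zero, so that the iterated application of the chosen $e_{-\delta\pm\epsilon_i}$ produces an ordered monomial whose leading term cannot be cancelled by relations in the kernel. The remaining ingredients — restriction to $\mathfrak{g}_{\bar 0}$, classical dominance, and the explicit tensor-power construction in sufficiency — are standard by comparison.
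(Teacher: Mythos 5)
The paper does not prove this statement---it is quoted from Kac \cite{K1,K3} without proof---so there is no internal argument to compare against; judged on its own, your proposal has genuine gaps in both directions.

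For necessity, the step you yourself flag as the ``main obstacle'' is the whole difficulty, and the tool you propose for it is the wrong one. A ``PBW leading-term argument inside the kernel of $M_\lambda\twoheadrightarrow L_\lambda$'' cannot work as described: the maximal proper submodule of $M_\lambda$ is not presented by explicit relations, and vectors of exactly the form $e_{-\delta\pm\epsilon_{i_1}}\cdots e_{-\delta\pm\epsilon_{i_j}}v_\lambda$ \emph{do} vanish in $L_\lambda$ in atypical situations---that is precisely the phenomenon governed by $(2.24)$--$(2.26)$ of the paper. The correct device is the isotropic $\mathfrak{sl}(1|1)$ computation: since $e_{-\alpha}^2=0$ for an isotropic odd root $\alpha$, one has $e_{\alpha}e_{-\alpha}v=\pm(\mu,\alpha)v$ for a suitable singular vector $v$ of weight $\mu$, so each successive $v^{(j+1)}$ is nonzero exactly when $(\mu^{(j)},\delta-\epsilon_{j+1})=-\lambda_0-\lambda_{j+1}+j\neq 0$ (this is the odd-reflection lemma). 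That quantity can vanish within the first $t$ steps, but only when $\lambda_0\le t-2$; so you should argue by contradiction, assume $\lambda_0<t$, and stop after $\lambda_0+1$ steps, where non-vanishing is automatic and the resulting $e_{2\delta}$-singular vector has $\mathfrak{sl}(2)$-weight $-1$. As written, your argument neither establishes $v^{(t)}\neq 0$ nor would it survive the atypical cancellations.

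For sufficiency, the assertion that ``the dominance and height inequalities on $\lambda$ guarantee that $\lambda$ occurs as a highest weight in some constituent'' of $(\mathbb{C}^{n|2})^{\otimes p}$ is exactly the statement to be proved, and it is false if read as ``every $\mathfrak{g}_{\bar 0}$-dominant weight of a tensor power is a highest weight of a composition factor.'' For $\mathfrak{osp}(4|2)$ the weight $\lambda=(1;1,1)=\delta+\epsilon_1+\epsilon_2$ is $\mathfrak{g}_{\bar 0}$-dominant and is a weight of $V^{\otimes 3}$, yet $L_\lambda$ is infinite dimensional because $\lambda_0=1<t=2$. Any correct sufficiency argument must actually use the inequality $\lambda_0\ge t$; yours never does. (An inductive construction along the lines of the paper's Lemma 4.3 and Remark 3.3---showing $L_{\mu+\delta}$, $L_{\mu\pm\epsilon_i}$ occur in $L_\mu\otimes L_\delta$ when the target weight again satisfies the conditions---is one way to make this work, but it has to be carried out, not asserted.)
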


\vspace{0.3cm} Denote
\begin{equation}
\mathcal {P}=\{\lambda\mid \mbox{$\lambda$ is integral and satisfies
the conditions in Theorem 2.1}\}.
\end{equation}
The weights in $\mathcal{P}$ are called {\em dominant integral}.

It can be checked directly that $\delta$ is a dominant integral
weight. Furthermore, $L_\delta$ is the natural representation of
$\mathfrak{osp}(n|2)$.

\subsection{Generalized Verma modules}
For any $\lambda\in \mathcal{P}$, we denote by $L^{(0)}_\lambda$ the
finite-dimensional irreducible $\mathfrak{g}_0$- module with highest
weight $\lambda$. Extend it to a
$\mathfrak{g}_0\oplus\mathfrak{u}$-module by putting
$\mathfrak{u}L^{(0)}_\lambda=0$. Then the {\em generalized Verma
module} $M_\lambda$ is defined as the induced module
\begin{equation}
M_\lambda=\mbox{Ind}^{\mathfrak{g}}_{\mathfrak{g}_0\oplus\mathfrak{u}}L^{(0)}_\lambda\cong
{U}(\mathfrak{u}^-)\otimes_{\mathbb{C}}L^{(0)}_\lambda.
\end{equation}
It is clear that $L_\lambda$ is the unique irreducible quotient
module of $M_\lambda$.

\subsection{Atypical weights}
Let $\rho_0$ (resp. $\rho_1$) be half the sum of positive even
(resp. odd) roots, and let $\rho=\rho_0-\rho_1$. Then
\begin{equation}
\rho=\left\{\begin{array}{ll}(1-m; m-1,
m-2,\ldots,1,0),&\quad\mbox{if}\quad
\mathfrak{g}=\mathfrak{osp}(2m|2);\\
(\frac{1}{2}-m; m-\frac{1}{2},
m-\frac{3}{2},\ldots,\frac{3}{2},\frac{1}{2}),&\quad\mbox{if}\quad
\mathfrak{g}=\mathfrak{osp}(2m+1|2).\end{array}\right.
\end{equation}

A weight $\lambda\in \mathfrak{h}^*$ is called {\em atypical} if
there is a positive odd root $\alpha\in\Delta_1^+/\{\delta\}$ such
that
\begin{equation}
(\lambda+\rho,\alpha)=0.
\end{equation} Sometimes we also call it {\em $\alpha$-atypical} to emphasize the odd root $\alpha$. Otherwise, we call $\lambda$ {\em
typical}.

It is clear that
\begin{equation}
\lambda \mbox{ is $(\delta\pm\epsilon_i)$-atypical } \Leftrightarrow
\left\{\begin{array}{ll} 1-m+\lambda_0=\pm(m-i+\lambda_i),
&\mbox{if}\quad
\mathfrak{g}=\mathfrak{osp}(2m|2);\\
\frac{1}{2}-m+\lambda_0=\pm(m-i+\frac{1}{2}+\lambda_i),
&\mbox{if}\quad
\mathfrak{g}=\mathfrak{osp}(2m+1|2).\end{array}\right.
\end{equation}

The following lemma is obvious by (2.24).
\begin{lemma}
If $\lambda\in \mathfrak{h}^*$ is $\alpha$-atypical, then
$\lambda\pm\alpha$ are also $\alpha$-atypical.
\end{lemma}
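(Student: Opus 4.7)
The plan is to unwind the definition of $\alpha$-atypicality and use the bilinear form (2.12) to show that every odd root appearing in Definition (2.23) is isotropic; once that is established the claim reduces to a one-line calculation.

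First I would recall that $\lambda$ being $\alpha$-atypical means $(\lambda+\rho,\alpha)=0$, where $\alpha\in\Delta_1^+\setminus\{\delta\}$. Replacing $\lambda$ by $\lambda\pm\alpha$, I compute
\begin{equation*}
((\lambda\pm\alpha)+\rho,\alpha)=(\lambda+\rho,\alpha)\pm(\alpha,\alpha)=\pm(\alpha,\alpha),
\end{equation*}
so the conclusion is equivalent to showing $(\alpha,\alpha)=0$ for every admissible $\alpha$.

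Next I would verify isotropy case-by-case. The odd roots permitted by the definition are $\alpha=\delta\pm\epsilon_i$ for $1\le i\le m$ (in both the $B$ and $D$ cases). Using (2.12),
\begin{equation*}
(\delta\pm\epsilon_i,\delta\pm\epsilon_i)=(\delta,\delta)\pm2(\delta,\epsilon_i)+(\epsilon_i,\epsilon_i)=-1+0+1=0.
\end{equation*}
Substituting back gives $((\lambda\pm\alpha)+\rho,\alpha)=0$, i.e. $\lambda\pm\alpha$ is $\alpha$-atypical.

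There is no real obstacle here: the only content is that the odd roots outside $\{\delta\}$ are isotropic vectors for the form $(\ ,\ )$, which is immediate from the explicit values in (2.12). This is precisely the reason the root $\delta$ itself (with $(\delta,\delta)=-1\neq 0$) is excluded from the definition of atypicality. The lemma could equivalently be stated as "atypicality is preserved under $\mathbb{Z}$-translation along the atypical direction," and the proof above makes this transparent.
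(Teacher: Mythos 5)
Your proof is correct and is essentially the argument the paper has in mind: the paper simply declares the lemma ``obvious by (2.24),'' and (2.24) is nothing but the coordinate expansion of $(\lambda+\rho,\alpha)=0$, so your bilinearity-plus-isotropy computation $((\lambda\pm\alpha)+\rho,\alpha)=\pm(\alpha,\alpha)=0$ is the same verification in cleaner form. Your closing remark correctly identifies why $\delta$ (with $(\delta,\delta)=-1$) must be excluded.
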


\subsection{Blocks of weights}
For any $\alpha\in\Delta_1^+/\{\delta\}$, define a map $t_{\alpha}:
\mathfrak{h}^*\rightarrow\mathfrak{h}^*$ by
\begin{equation}
t_{\alpha}(\lambda)=\left\{\begin{array}{ll}\lambda+\alpha,&
\mbox{if $\lambda$ is $\alpha$-atypical};\\
\lambda, & \mbox{otherwise}
\end{array}\right.
\end{equation} and define $t_\alpha=t^{-1}_{-\alpha}$ for any $\alpha\in\Delta_1^-/\{\delta\}$

Let $\mathcal{W}$ be the Weyl group of $\mathfrak{g}$ (i.e. the Weyl
group of Lie algebra $\mathfrak{g}_{\bar{0}}$). For any
$w\in\mathcal{W}$, define a map $t_w:
\mathfrak{h}^*\rightarrow\mathfrak{h}^*$ by
\begin{equation}
t_w(\lambda)=w(\lambda+\rho)-\rho.
\end{equation}

Denote
\begin{equation}
T=\{t_\alpha, t_w\mid \alpha\in\Delta_1/\{\delta\},
w\in\mathcal{W}\}.
\end{equation}

We say two weights $\lambda, \mu\in\mathfrak{h}^*$ are \emph{in the
same block} (denote by $\lambda\sim\mu$) if there exist $r_1,
r_2,\ldots,r_p\in T$ such that $\lambda=r_1r_2\cdots r_p\mu$.

\subsection{Partial orders}
Define a partial order ``$\succ$'' in $\mathcal {P}$ by
\begin{equation}
\lambda\succ\mu\quad\Leftrightarrow\quad \lambda\sim\mu\mbox{ and }
\mbox{ht }\lambda>\mbox{ht }\mu.
\end{equation}

In this paper, we also always use the natural partial order ``$>$''
in $\mathfrak{h}^*$ by
\begin{equation}
\lambda>\mu \quad\Leftrightarrow\quad \lambda-\mu \mbox{ is a
$\mathbb{Z}_{\geq 0}$-linear sum of positive roots}.
\end{equation}

\subsection{Character formulae for typical weights}
It is obvious that any typical dominant integral weight
$\lambda\in\mathcal {P}$ satisfies that $\lambda_0\geq m$.

Denote
\begin{equation}
\mathcal {P}_{\mathfrak{o}(n)}=\{\mbox{dominant integral weights of
}\mathfrak{o}(n)\}.
\end{equation} For any $\lambda=\sum_{i=1}^m \lambda_i\epsilon_i\in\mathcal {P}_{\mathfrak{o}(n)}$, denote by
$\mathcal {L}_\lambda$ the irreducible $\mathfrak{o}(n)$-module with
weight $\lambda$.

For any $\lambda=\lambda_0\delta+\sum_{i=1}^m \lambda_i\epsilon_i\in
\mathcal{P}$, write
\begin{equation}
\lambda_{\mathfrak{o}(n)}=\sum_{i=1}^m
\lambda_i\epsilon_i\in\mathcal {P}_{\mathfrak{o}(n)}
\end{equation} and
\begin{equation}
\lambda^\delta=(n-2-\lambda_0)\delta+\sum_{i=1}^m
\lambda_i\epsilon_i.
\end{equation}
It is clear that $\lambda\sim\lambda^\delta$.

\begin{lemma}
For any $\lambda\in\mathcal {P}$ with $\lambda_0\geq m$,
\begin{equation}
\mbox{ch }M_\lambda-\mbox{ch
}M_{\lambda^\delta}=\frac{\prod_{\alpha\in\Delta_1^+}(e^{\frac{\alpha}{2}}+e^{-\frac{\alpha}{2}})}
{\prod_{\alpha\in\Delta_0^+}(e^{\frac{\alpha}{2}}-e^{-\frac{\alpha}{2}})}\sum_{w\in\mathcal
{W}}\varepsilon(w)e^{w(\lambda+\rho)}.
\end{equation}
\end{lemma}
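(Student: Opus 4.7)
The plan is to compute both sides directly using the tensor product decomposition $M_\lambda\cong U(\mathfrak{u}^-)\otimes L^{(0)}_\lambda$ combined with the classical Weyl character formula for the reductive factor $\mathfrak{g}_0\cong\mathfrak{o}(n)\oplus\mathbb{C}\delta$, and to verify that the two expressions coincide as rational functions of $e^{\pm\epsilon_i}$ and $e^{\pm\delta}$.

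For the left-hand side, the PBW theorem (using that $\mathfrak{u}^-_{\bar 0}=\mathfrak{g}_{-2\delta}$ and that $\mathfrak{u}^-_{\bar 1}$ has weights $-\alpha$ for $\alpha\in\Delta_1^+$) gives
$$\mbox{ch}\, U(\mathfrak{u}^-) = \frac{\prod_{\alpha\in\Delta_1^+}(1+e^{-\alpha})}{1-e^{-2\delta}}.$$
Since $L^{(0)}_\lambda$ factors as $\mathcal{L}_{\lambda_{\mathfrak{o}(n)}}\otimes\mathbb{C}_{\lambda_0\delta}$ and $\lambda^\delta$ differs from $\lambda$ only in the $\delta$-component, this yields
$$\mbox{ch}\, M_\lambda-\mbox{ch}\, M_{\lambda^\delta}=\frac{\prod_{\alpha\in\Delta_1^+}(1+e^{-\alpha})}{1-e^{-2\delta}}\bigl(e^{\lambda_0\delta}-e^{(n-2-\lambda_0)\delta}\bigr)\mbox{ch}\,\mathcal{L}_{\lambda_{\mathfrak{o}(n)}}.$$

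For the right-hand side, the main tool is the factorization $\mathcal{W}=\mathcal{W}_{\mathfrak{o}(n)}\times\{1,s_\delta\}$, where $s_\delta$ is the reflection in the $\mathfrak{sl}(2)$-factor sending $\delta\mapsto-\delta$ and fixing every $\epsilon_i$. Writing $\rho=\rho_\delta\delta+\rho_{\mathfrak{o}(n)}$, the alternating sum splits as
$$\sum_{w\in\mathcal{W}}\varepsilon(w)e^{w(\lambda+\rho)}=\bigl(e^{(\lambda_0+\rho_\delta)\delta}-e^{-(\lambda_0+\rho_\delta)\delta}\bigr)\sum_{w'\in\mathcal{W}_{\mathfrak{o}(n)}}\varepsilon(w')e^{w'(\lambda_{\mathfrak{o}(n)}+\rho_{\mathfrak{o}(n)})},$$
and the inner sum is identified with $\mbox{ch}\,\mathcal{L}_{\lambda_{\mathfrak{o}(n)}}\cdot\prod_{\alpha\in\Delta_0^+\setminus\{2\delta\}}(e^{\alpha/2}-e^{-\alpha/2})$ by the classical Weyl character formula. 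After converting each factor $e^{\alpha/2}\pm e^{-\alpha/2}$ to the $(1\pm e^{-\alpha})$ form (which contributes an overall $e^{\rho_0-\rho_1}=e^\rho$), the $\mathfrak{o}(n)$-side Vandermonde products cancel, and what remains is the left-hand side multiplied by $e^{-\rho_\delta\delta}\bigl(e^{(\lambda_0+\rho_\delta)\delta}-e^{-(\lambda_0+\rho_\delta)\delta}\bigr)$.

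The final matching reduces to the numerical identity $-2\rho_\delta=n-2$, verified case by case from (2.22): $\rho_\delta=1-m$ gives $2m-2=n-2$ for $D(m|1)$, and $\rho_\delta=\tfrac12-m$ gives $2m-1=n-2$ for $B(m|1)$. The main delicate step is the $\rho$-bookkeeping when passing between the $\prod(e^{\alpha/2}-e^{-\alpha/2})$ and $\prod(1-e^{-\alpha})$ conventions; once aligned, the identity is purely formal. Note that the hypothesis $\lambda_0\geq m$ does not enter the algebraic computation—it is background context ensuring $\lambda$ is typical, so that the right-hand side can be identified with $\mbox{ch}\, L_\lambda$ via Kac's theorem.
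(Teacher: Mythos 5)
Your proposal is correct and follows essentially the same route as the paper: both compute $\mbox{ch}\,M_\lambda-\mbox{ch}\,M_{\lambda^\delta}$ via the PBW character of $U(\mathfrak{u}^-)$, invoke the classical Weyl character formula for $\mathcal{L}_{\lambda_{\mathfrak{o}(n)}}$, and absorb the $\delta$-part into the full Weyl sum using the factorization $\mathcal{W}=\mathcal{W}_{\mathfrak{o}(n)}\times\{1,s_\delta\}$ together with the identity $-2\rho_\delta=n-2$, before the $e^{\rho}$ conversion between the two denominator conventions. Your closing observation that the hypothesis $\lambda_0\geq m$ plays no role in the formal computation is also consistent with the paper's argument.
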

\begin{proof}
For any $\lambda\in\mathcal {P}$, it is obvious that
$\lambda_{\mathfrak{o}(n)}\in\mathcal {P}_{{\mathfrak{o}(n)}}$

Notice that by definition (2.25),
\begin{equation}
\mbox{ch
}M_\nu=\frac{\prod_{\alpha\in\Delta_1^+}(1+e^{-\alpha})}{1-e^{-2\delta}}\mbox{ch
}L_\nu^{(0)}.
\end{equation}
Hence
\begin{eqnarray*}
\mbox{ch }M_\lambda-\mbox{ch
}M_{\lambda^\delta}&=&\frac{\prod_{\alpha\in\Delta_1^+}(1+e^{-\alpha})}{1-e^{-2\delta}}
(e^{\lambda_0\delta}-e^{(n-2-\lambda_0)\delta})\mbox{ch }\mathcal
{L}_{\lambda_{\mathfrak{o}(n)}}\\
&=&\frac{\prod_{\alpha\in\Delta_1^+}(1+e^{-\alpha})}{1-e^{-2\delta}}
(e^{\lambda_0\delta}-e^{(n-2-\lambda_0)\delta})\frac{\sum_{w\in
\mathcal
{W}_{\mathfrak{o}(n)}}\varepsilon(w)e^{w(\lambda_{\mathfrak{o}(n)}+\rho_{\mathfrak{o}(n)})-\rho_{\mathfrak{o}(n)}}}
{\prod_{\alpha\in\Delta_{\mathfrak{o}(n)}^+}(1-e^{-\alpha})}\\
&=&\frac{\prod_{\alpha\in\Delta_1^+}(1+e^{-\alpha})}{\prod_{\alpha\in\Delta_0^+}(1-e^{-\alpha})}
\sum_{w\in\mathcal {W}}\varepsilon(w)e^{w(\lambda+\rho)-\rho}\\
&=&\frac{\prod_{\alpha\in\Delta_1^+}(1+e^{-\alpha})}{\prod_{\alpha\in\Delta_0^+}(1-e^{-\alpha})}
\frac{\prod_{\alpha\in\Delta_1^+}e^{\frac{\alpha}{2}}}{\prod_{\alpha\in\Delta_0^+}e^{\frac{\alpha}{2}}}
\sum_{w\in\mathcal {W}}\varepsilon(w)e^{w(\lambda+\rho)}\\&=&
\frac{\prod_{\alpha\in\Delta_1^+}(e^{\frac{\alpha}{2}}+e^{-\frac{\alpha}{2}})}
{\prod_{\alpha\in\Delta_0^+}(e^{\frac{\alpha}{2}}-e^{-\frac{\alpha}{2}})}\sum_{w\in\mathcal
{W}}\varepsilon(w)e^{w(\lambda+\rho)},
\end{eqnarray*} where $\mathcal
{W}_{\mathfrak{o}(n)}$ is the Weyl group of $\mathfrak{o}(n)$,
$\Delta_{\mathfrak{o}(n)}^+=\Delta_0^+\setminus\{2\delta\}$ is the
positive roots of $\mathfrak{o}(n)$, and
$\rho_{\mathfrak{o}(n)}=\rho+(\frac{n}{2}-1)\delta$.
\end{proof}

At the same time, it is well known that
\begin{theorem}{\bf(Kac \cite{K3})}
Weight $\lambda\in\mathcal {P}$ is typical if and only if
\begin{equation}
\mbox{ch }
L_\lambda=\frac{\prod_{\alpha\in\Delta_1^+}(e^{\frac{\alpha}{2}}+e^{-\frac{\alpha}{2}})}
{\prod_{\alpha\in\Delta_0^+}(e^{\frac{\alpha}{2}}-e^{-\frac{\alpha}{2}})}\sum_{w\in\mathcal
{W}}\varepsilon(w)e^{w(\lambda+\rho)}.
\end{equation}
\end{theorem}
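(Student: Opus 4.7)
The plan is to leverage Lemma 2.3, whose right-hand side equals $\mbox{ch }M_\lambda - \mbox{ch }M_{\lambda^\delta}$ whenever $\lambda_0 \geq m$. Since every typical dominant integral weight satisfies $\lambda_0 \geq m$ (as noted at the start of Section 2.9), the forward direction of the theorem reduces to the identity
\[
\mbox{ch }L_\lambda = \mbox{ch }M_\lambda - \mbox{ch }M_{\lambda^\delta}\qquad\text{for typical }\lambda\in\mathcal{P}.
\]

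I would prove this identity by classifying the composition factors of $M_\lambda$ via the quadratic Casimir $\Omega$, which acts on $M_\mu$ by the scalar $c(\mu) = (\mu+\rho,\mu+\rho) - (\rho,\rho)$. Any composition factor $L_\nu$ of $M_\lambda$ satisfies $\nu \leq \lambda$ and $c(\nu) = c(\lambda)$; writing $\nu = \lambda - \beta$ with $\beta$ a non-negative integer combination of positive roots, the latter becomes $2(\lambda+\rho,\beta) = (\beta,\beta)$. A case analysis in the $\delta\epsilon$-basis shows that typicality of $\lambda$, i.e.\ $(\lambda+\rho,\alpha) \neq 0$ for all $\alpha \in \Delta_1^+\setminus\{\delta\}$, together with dominance of $\nu$ for its block, forces either $\beta = 0$ (giving $\nu = \lambda$) or $\beta$ a positive multiple of $\delta$ with $\nu = \lambda^\delta$. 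Explicitly constructing a singular vector of weight $\lambda^\delta$ in $M_\lambda$ by acting on the highest weight vector with a suitable element of $U(\mathfrak{u}^-)$ then produces an embedding $M_{\lambda^\delta} \hookrightarrow M_\lambda$ with quotient $L_\lambda$, which yields the required character identity.

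For the converse I argue contrapositively. If $\lambda$ is $\alpha$-atypical for some $\alpha \in \Delta_1^+\setminus\{\delta\}$, then $\alpha = \delta \pm \epsilon_i$ is isotropic and $(\lambda+\rho,\alpha) = 0$, so a direct computation gives $c(\lambda - \alpha) = c(\lambda)$, while Lemma 2.2 guarantees $\lambda - \alpha$ is again $\alpha$-atypical. A standard highest-weight-vector argument exhibits $L_{\lambda-\alpha}$ as a genuine composition factor of $M_\lambda$ not accounted for by $M_{\lambda^\delta}$, which forces $\mbox{ch }L_\lambda$ to differ from $\mbox{ch }M_\lambda - \mbox{ch }M_{\lambda^\delta}$ and hence from the right-hand side of (2.35).

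The chief obstacle is the typical direction: the case analysis needed to rule out all candidate composition factors below $\lambda$ except $L_{\lambda^\delta}$ is delicate, requiring careful bookkeeping in the $\delta\epsilon$-coordinates combined with the isotropy of the non-$\delta$ odd roots. Once that list is pinned down, exhibiting the embedding $M_{\lambda^\delta}\hookrightarrow M_\lambda$ and concluding the character equality is essentially routine.
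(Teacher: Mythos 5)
The paper does not actually prove this statement: Theorem~2.5 is imported verbatim from Kac \cite{K3} and used as a black box (only its consequence, Corollary~2.6, is derived, via Lemma~2.4). So any argument you supply is automatically ``different from the paper''; the only question is whether your sketch is sound, and I do not believe the forward direction is.

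The reduction to $\mbox{ch } L_\lambda=\mbox{ch } M_\lambda-\mbox{ch } M_{\lambda^\delta}$ via Lemma~2.4 is fine, but the key step --- that the quadratic Casimir condition $2(\lambda+\rho,\beta)=(\beta,\beta)$, together with $\nu=\lambda-\beta\leq\lambda$ and $\mathfrak{o}(n)$-dominance of $\nu$, forces $\beta=0$ or $\beta$ a multiple of $\delta$ when $\lambda$ is typical --- is false. Take $\mathfrak{g}=\mathfrak{osp}(4|2)$, so $\rho=(-1;1,0)$, and $\lambda=(4;4,0)$, which is typical since $(\lambda+\rho,\delta\pm\epsilon_1)=-3\pm5$ and $(\lambda+\rho,\delta\pm\epsilon_2)=-3$ are all nonzero. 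For $\beta=3\delta+\epsilon_1=2\delta+(\delta+\epsilon_1)$, a sum of roots of $\mathfrak{u}$, one computes $2(\lambda+\rho,\beta)=2(-9+5)=-8=(\beta,\beta)$, and $\nu=\lambda-\beta=(1;3,0)$ is $\mathfrak{o}(4)$-dominant (it even lies in $\mathcal{P}$) and genuinely occurs as a $\mathfrak{g}_0$-highest weight of $M_\lambda$, inside $e_{-2\delta}\cdot(\mathfrak{g}_{-1}\otimes L^{(0)}_\lambda)$. This $\nu$ is excluded as a composition factor only because it lies in a different block --- it is $(\delta\pm\epsilon_2)$-atypical, hence not linked to the typical $\lambda$ --- and detecting that requires the full centre of $U(\mathfrak{g})$ (the linkage principle / Harish-Chandra description of typical central characters), not just the eigenvalue of $\Omega$. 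That linkage input is of essentially the same depth as the theorem you are trying to prove, and is in fact what Kac establishes in \cite{K3}. The converse direction is also thinner than you present it: $e_{-\alpha}v_\lambda$ being a singular vector is not automatic and $\lambda-\alpha$ need not be $\mathfrak{o}(n)$-dominant; moreover, for atypical $\lambda\in\mathcal{P}$ one has $\lambda_0<m$ in general, so Lemma~2.4 does not apply and the right-hand side of the Kac formula is no longer $\mbox{ch } M_\lambda-\mbox{ch } M_{\lambda^\delta}$, which makes ``not accounted for by $M_{\lambda^\delta}$'' the wrong comparison. The honest options are to cite Kac as the paper does, or to state and use the linkage principle explicitly before running the composition-series argument.
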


Thanks to Lemma 2.4 and Theorem 2.5, we can obtain the following
corollary immediately.

\begin{corollary} For any $\lambda\in\mathcal {P}$,
\begin{equation}
\mbox{ch } L_\lambda=\mbox{ch } M_\lambda-\mbox{ch }
M_{\lambda^\delta}
\end{equation} if and only if $\lambda$ is
typical.
\end{corollary}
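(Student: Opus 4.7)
The plan is to read Lemma~2.4 and Theorem~2.5 as two expressions for the \emph{same} Weyl-like alternating sum: the former realises it as $\mbox{ch }M_\lambda-\mbox{ch }M_{\lambda^\delta}$ when $\lambda_0\geq m$, the latter as $\mbox{ch }L_\lambda$ when $\lambda$ is typical. For the ``if'' direction this is already enough: if $\lambda\in\mathcal{P}$ is typical then (as observed immediately before Lemma~2.4) $\lambda_0\geq m$, so both Lemma~2.4 and Theorem~2.5 apply and their right-hand sides coincide, giving $\mbox{ch }L_\lambda=\mbox{ch }M_\lambda-\mbox{ch }M_{\lambda^\delta}$.

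For the ``only if'' direction I argue the contrapositive, assuming $\lambda$ is atypical. If $\lambda_0\geq m$, Lemma~2.4 still expresses $\mbox{ch }M_\lambda-\mbox{ch }M_{\lambda^\delta}$ as the Weyl-like sum, and Theorem~2.5, read as an iff, forbids this sum from equalling $\mbox{ch }L_\lambda$, so equality fails. If instead $\lambda_0<m$, then $\lambda$ is automatically atypical (one sees this by taking $i=\lambda_0+1\leq m$ in~(2.24), which makes $\lambda$ a $(\delta-\epsilon_i)$-atypical weight); here I resort to a positivity argument. Direct computation gives $\lambda^\delta-\lambda=(n-2-2\lambda_0)\delta$, which is a non-negative combination of positive roots (a multiple of $2\delta\in\Delta_0^+$ when $n=2m$, a multiple of $\delta\in\Delta_1^+$ when $n=2m+1$), so $\lambda^\delta\geq\lambda$. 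When the inequality is strict, $e^{\lambda^\delta}$ occurs in $\mbox{ch }M_{\lambda^\delta}$ with coefficient $1$ and does not occur in $\mbox{ch }M_\lambda$ at all, so $\mbox{ch }M_\lambda-\mbox{ch }M_{\lambda^\delta}$ carries a term with negative coefficient, contradicting the non-negativity of $\mbox{ch }L_\lambda$. When $\lambda^\delta=\lambda$ (possible only for $n=2m$ and $\lambda_0=m-1$) the difference vanishes while $\mbox{ch }L_\lambda\neq 0$.

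The main obstacle is the low range $\lambda_0<m$, which is not covered by Lemma~2.4 as stated; the positivity observation above substitutes for it. Everything else is a direct conjunction of Lemma~2.4 and Theorem~2.5, matching the author's remark that the corollary follows ``immediately''.
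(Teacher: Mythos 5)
Your proof is correct and follows the route the paper intends: the corollary is presented as an immediate consequence of Lemma 2.4 and Theorem 2.5, and your ``if'' direction together with the $\lambda_0\geq m$ branch of the ``only if'' direction is exactly that conjunction (using the observation, made just before Lemma 2.4, that typical forces $\lambda_0\geq m$). The one place where the paper's one-line justification is silent --- atypical $\lambda$ with $\lambda_0<m$, where Lemma 2.4 does not apply --- you patch correctly and necessarily: $\lambda^\delta-\lambda=(n-2-2\lambda_0)\delta$ is a non-negative multiple of a positive root, so $\mbox{ch }M_\lambda-\mbox{ch }M_{\lambda^\delta}$ either vanishes (when $\lambda^\delta=\lambda$) or carries coefficient $-1$ at $e^{\lambda^\delta}$, neither of which can equal $\mbox{ch }L_\lambda$.
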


\section{Cohomology and character}

\subsection{Definition of cohomology}
The space of $q$-dimensional cochains of the Lie superalgebra
$\mathfrak{u}=\mathfrak{u}_{\bar{0}}\otimes\mathfrak{u}_{\bar{1}}$
with coefficients in the module $L_\lambda$ is given by
\begin{equation}
C^q(\mathfrak{u};
L_\lambda)=\bigoplus_{q_0+q_1=q}\mbox{Hom}(\wedge^{q_0}\mathfrak{u}_{\bar{0}}\otimes
S^{q_1}\mathfrak{u}_{\bar{1}}, L_\lambda).
\end{equation}
The differential $d: C^q(\mathfrak{u}; L_\lambda)\rightarrow
C^{q+1}(\mathfrak{u}; L_\lambda)$ is defined by
\begin{equation}
\begin{array}{c}
d c(\xi_1,\ldots,\xi_{q_0},\eta_1,\ldots,\eta_{q_1})\\=\sum_{1\leq s
\leq t\leq
q_0}(-1)^{s+t-1}c([\xi_s,\xi_t],\xi_1,\ldots,\widehat{\xi_s},\ldots,\widehat{\xi_t},\ldots,
\xi_{q_0},\eta_1,\ldots,\eta_{q_1})
\\
+\sum_{s=1}^{q_0}\sum_{t=1}^{q_1}(-1)^{s-1}c(\xi_1,\ldots,\widehat{\xi_s},\ldots,\xi_{q_0},
[\xi_s,\eta_t],\eta_1,\ldots,\widehat{\eta_t},\ldots,\eta_{q_1})\\
+\sum_{1\leq s\leq t\leq
q_1}c([\eta_s,\eta_t],\xi_1,\ldots,\xi_{q_0},
\eta_1,\ldots,\widehat{\eta_s},\ldots,\widehat{\eta_t},\ldots,\eta_{q_1})\\
+\sum_{s=1}^{q_0}(-1)^s\xi_s c(\xi_1,\ldots,\widehat{\xi_s},\ldots,\xi_{q_0},\eta_1,\ldots,\eta_{q_1})\\
+(-1)^{q_0-1}\sum_{s=1}^{q_1}\eta_s
c(\xi_1,\ldots,\xi_{q_0},\eta_1,\ldots,\widehat{\eta_s},\ldots,\eta_{q_1})
\end{array}\end{equation}
where $c\in C^{q}(\mathfrak{u}; L_\lambda)$,
$\xi_1,\ldots,\xi_{q_0}\in\mathfrak{u}_{\bar{0}}$,
$\eta_1,\ldots,\eta_{q_1}\in\mathfrak{u}_{\bar{1}}$.

The cohomology of $\mathfrak{u}$ with coefficients in the module
$L_\lambda$ is the cohomology groups of the complex
$C=(\{C^q(\mathfrak{u}; L_\lambda)\},d)$, and it is denoted by
$H^q(\mathfrak{u}, L_\lambda)$.

\subsection{Euler-Poincar$\acute{\mbox{e}}$ principle}
The Euler-Poincar$\acute{\mbox{e}}$ principle says that, for each
weight $\nu$,
\begin{equation}
\sum_{i=0}^\infty(-1)^i\dim H^i(\mathfrak{u},
L_\lambda)_\nu=\sum_{i=0}^\infty(-1)^i\dim C^i(\mathfrak{u};
L_\lambda)_\nu,
\end{equation}
where $H^i(\mathfrak{u}, L_\lambda)_\nu$ and $C^i(\mathfrak{u};
L_\lambda)_\nu$ are the weight-$\nu$ subspace of $H^i(\mathfrak{u},
L_\lambda)$ and $C^i(\mathfrak{u}; L_\lambda)$, respectively. Thus
taking the formal characters, we have
\begin{equation}
\sum_{i=0}^\infty(-1)^i\mbox{ch } H^i(\mathfrak{u},
L_\lambda)=\sum_{i=0}^\infty(-1)^i\mbox{ch } C^i(\mathfrak{u};
L_\lambda).
\end{equation}

\subsection{Kazhdan-Lusztig polynomials}
Recall that
$\mathfrak{u}_{\bar{0}}=\mathfrak{g}_2=\mathfrak{g}_{2\delta}$ and
$\mathfrak{u}_{\bar{1}}=\mathfrak{g}_1=\bigoplus_{\alpha\in\Delta_1^+}\mathfrak{g}_\alpha$.
So
\begin{equation}
C^i(\mathfrak{u}; L_\lambda)\simeq (\mathfrak{g}_{2}^*\otimes
S^{i-1}(\mathfrak{g}_1^*)\oplus S^{i}(\mathfrak{g}_1^*))\otimes
L_\lambda.
\end{equation}
Hence we can calculate that
\begin{eqnarray}
\sum_{i=0}^\infty(-1)^i\mbox{ch } C^i(\mathfrak{u};
L_\lambda)=\frac{(1-e^{-2\delta})\mbox{ch
}L_\lambda}{\prod_{\alpha\in\Delta_1^+}(1+e^{-\alpha})}.
\end{eqnarray}

Combining (3.4) and (3.6), we get
\begin{equation}
\mbox{ch } L_\lambda=\sum_{i=0}^\infty(-1)^i\mbox{ch }
H^i(\mathfrak{u},
L_\lambda)\frac{\prod_{\alpha\in\Delta_1^+}(1+e^{-\alpha})}{1-e^{-2\delta}}.
\end{equation}

Let $[H^i(\mathfrak{u},L_\lambda): L_\nu^{(0)}]$ denote the
multiplicity of $L_\nu^{(0)}$ in the cohomology group regarded as a
$\mathfrak{g}_{0}$ module. An easy inspection of the complex
$C=(\{C^q(\mathfrak{u}; L_\lambda)\},d)$ reveals that
\begin{equation}
[H^i(\mathfrak{u},L_\lambda):
L_\lambda^{(0)}]=\left\{\begin{array}{ll}1,& (i=0);\\
0,&(i>0),\end{array}\right.
\end{equation}
\begin{equation}
[H^i(\mathfrak{u},L_\lambda): L_\mu^{(0)}]=0, \quad(\forall
\mu>\lambda).
\end{equation}

In terms of these multiplicities, $\mbox{ch } H^i(\mathfrak{u},
L_\lambda)$ can be expressed as
\begin{equation}
\mbox{ch } H^i(\mathfrak{u}, L_\lambda)=\sum_\nu[H^i(\mathfrak{u},
L_\lambda): L_\nu^{(0)}]\mbox{ch }L_\nu^{(0)}.
\end{equation}
Notice that by definition (2.21),
\begin{equation}
\mbox{ch
}M_\nu=\frac{\prod_{\alpha\in\Delta_1^+}(1+e^{-\alpha})}{1-e^{-2\delta}}\mbox{ch
}L_\nu^{(0)}.
\end{equation}

By (3.7), (3.8) and (3.9), we obtain the following lemma.
\begin{lemma}
The formal character of $L_\lambda$ can be expressed as
\begin{equation}
\mbox{ch }
L_\lambda=\sum_\nu\sum_{i=0}^\infty(-1)^i[H^i(\mathfrak{u},
L_\lambda): L_\nu^{(0)}]\mbox{ch } M_\nu.
\end{equation}
\end{lemma}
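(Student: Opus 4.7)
The plan is to prove Lemma 3.1 by a straightforward chain of substitutions among the three displayed identities (3.7), (3.10) and (3.11) established immediately before the statement; there is essentially no new idea to inject beyond combining the Euler--Poincar\'e-type identity with the generalized Verma character formula.

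First I would start from (3.7), which is the output of the Euler--Poincar\'e principle applied to the complex $C^\bullet(\mathfrak{u};L_\lambda)$ combined with the explicit computation of $\sum_i (-1)^i \mbox{ch}\, C^i(\mathfrak{u};L_\lambda)$ given in (3.6). Next I would expand each cohomology group as a $\mathfrak{g}_0$-module using its Jordan--H\"older decomposition: by (3.10), the character $\mbox{ch}\, H^i(\mathfrak{u},L_\lambda)$ is the sum $\sum_\nu [H^i(\mathfrak{u},L_\lambda): L_\nu^{(0)}]\, \mbox{ch}\, L_\nu^{(0)}$. The key point here is that the factor $\frac{\prod_{\alpha\in\Delta_1^+}(1+e^{-\alpha})}{1-e^{-2\delta}}$ appearing in (3.7) is exactly the factor that converts $\mbox{ch}\, L_\nu^{(0)}$ into $\mbox{ch}\, M_\nu$ according to (3.11), which itself follows from the PBW isomorphism $M_\nu \cong U(\mathfrak{u}^-)\otimes L_\nu^{(0)}$ together with $\mathfrak{u}^-_{\bar 0} = \mathfrak{g}_{-2\delta}$ and $\mathfrak{u}^-_{\bar 1} = \mathfrak{g}_{-1}$.

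Substituting (3.10) into (3.7) and then pulling the $\mathfrak{u}^-$-factor inside the $\nu$-sum to form $\mbox{ch}\, M_\nu$ via (3.11), and finally interchanging the (legitimately finite) summations over $i$ and $\nu$, gives
\begin{equation*}
\mbox{ch}\, L_\lambda = \sum_\nu \sum_{i=0}^\infty (-1)^i [H^i(\mathfrak{u},L_\lambda): L_\nu^{(0)}]\, \mbox{ch}\, M_\nu,
\end{equation*}
as claimed. The only point that deserves a brief comment is that the interchange of sums is harmless: $H^i(\mathfrak{u},L_\lambda)$ vanishes for $i > \dim\mathfrak{u}_{\bar 0} + \dim\mathfrak{u}_{\bar 1}$'s finite-dimensional bookkeeping in each weight space, and for each weight only finitely many $\nu$ can contribute by (3.9), so the double sum makes sense formally weight-by-weight. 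I do not anticipate a serious obstacle; the content of the lemma is really that the machinery of Sections 3.1--3.3 has already been set up in a form where the claimed identity is a one-line consequence, and the only bookkeeping is to track which factor cancels against which.
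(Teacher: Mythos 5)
Your proposal is correct and is essentially the paper's own argument: the paper derives Lemma 3.1 by exactly this substitution of the Jordan--H\"older expansion (3.10) into the Euler--Poincar\'e identity (3.7) and absorbing the factor $\prod_{\alpha\in\Delta_1^+}(1+e^{-\alpha})/(1-e^{-2\delta})$ into $\mbox{ch }M_\nu$ via (3.11), with convergence handled formally weight-by-weight. (The paper's citation of ``(3.7), (3.8) and (3.9)'' at this point appears to be a slip for (3.7), (3.10) and (3.11), which is what you correctly used.)
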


\begin{remark}
In the above expression, any coefficient of $\mbox{ch } M_\nu$
should be an integral number. Moreover, $[H^i(\mathfrak{u},
L_\lambda): L_\nu^{(0)}]=0$ if $\lambda\not\sim\nu$.
\end{remark}

\begin{remark}
Thanks to (3.8) and (3.9),
\begin{equation}
\mbox{ch } L_\lambda=\mbox{ch }
M_\lambda+\sum_{\nu<\lambda}\sum_{i=0}^\infty(-1)^i[H^i(\mathfrak{u},
L_\lambda): L_\nu^{(0)}]\mbox{ch } M_\nu.
\end{equation} Notice that the coefficient of $\mbox{ch }
M_\lambda$ is $1$, which is very important in our arguments below.
\end{remark}

\subsection{$\mathfrak{u}$-cohomology with trivial coefficients}
Below we shall take $\lambda=0$ and compute the cohomology
$H(\mathfrak{u}, L_0)$.

Let $\{e_\alpha\in\mathfrak{g}_\alpha\mid \alpha\in\Delta\}$ be a
Chevalley basis of $\mathfrak{g}$.

Now (3.1) and (3.2) can be simplified to
\begin{equation}
C^q(\mathfrak{u})=\mbox{Hom}(S^q\mathfrak{u}_{\bar{1}},\mathbb{C})\oplus\mbox{Hom}(\mathfrak{g}_{2\delta}\otimes
S^{q-1}\mathfrak{u}_{\bar{1}},\mathbb{C})
\end{equation}
and
\begin{equation}
\left\{\begin{array}{l}d
c(\eta_1,\ldots,\eta_{q})=\sum_{[\eta_i,\eta_j]=k e_{2\delta},
i<j}kc(e_{2\delta},\eta_1,\ldots,\widehat{\eta_i},\ldots,\widehat{\eta_j},\ldots,\eta_{q}),\\
d c(e_{2\delta},\eta_1,\ldots,\eta_{q-1})=0,
\end{array}\right.
\end{equation}
where $\eta_1,\ldots,\eta_{q}\in\mathfrak{u}_{\bar{1}}$.

Therefore the space of $q$-dimensional cochains is
\begin{equation}
Z^q(\mathfrak{u})=\mathbb{C}[e_{\alpha}^*\mid \alpha\in\Delta_1^+]_q
\end{equation}
and the space of $q$-dimensional coboundaries is
\begin{equation}
B^q(\mathfrak{u})=\left\{\begin{array}{ll}
(\sum_{i=1}^me_{\delta-\epsilon_i}^*e_{\delta+\epsilon_i}^*)\mathbb{C}[e_{\alpha}^*\mid
\alpha\in\Delta_1^+]_{q-2}, &\mbox{if}\quad
\mathfrak{g}=\mathfrak{osp}(2m|2);\\
(e_{\delta}^*e_{\delta}^*+\sum_{i=1}^me_{\delta-\epsilon_i}^*e_{\delta+\epsilon_i}^*)\mathbb{C}[e_{\alpha}^*\mid
\alpha\in\Delta_1^+]_{q-2}, &\mbox{if}\quad
\mathfrak{g}=\mathfrak{osp}(2m+1|2).
\end{array}\right.
\end{equation}
where $e_\alpha^*$ is the dual basis of $e_\alpha$ in
$\mathfrak{g}^*$.

Hence by the representation theory of orthogonal Lie algebra
$\mathfrak{0}(n)$, we have the following result.
\begin{proposition}
As a $\mathfrak{g}_{0}$-module,
\begin{equation}
H^q(\mathfrak{u})\cong L^{(0)}_{q\epsilon_1-q\delta}.
\end{equation}
\end{proposition}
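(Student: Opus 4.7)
Combining (3.15) and (3.16) identifies $H^q(\mathfrak{u})$ with the degree-$q$ part of $\mathbb{C}[e_\alpha^*\mid\alpha\in\Delta_1^+]$ modulo the principal ideal generated by a single quadratic element $F$, namely $F=\sum_{i=1}^{m}e_{\delta-\epsilon_i}^*e_{\delta+\epsilon_i}^*$ in type $D(m|1)$ and $F=(e_\delta^*)^2+\sum_{i=1}^{m}e_{\delta-\epsilon_i}^*e_{\delta+\epsilon_i}^*$ in type $B(m|1)$. So the task reduces to the purely classical problem of identifying the $\mathfrak{g}_0$-module
\begin{equation*}
\mathbb{C}[e_\alpha^*\mid\alpha\in\Delta_1^+]_q\,\big/\,F\cdot\mathbb{C}[e_\alpha^*\mid\alpha\in\Delta_1^+]_{q-2}
\end{equation*}
as an irreducible highest-weight module.

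The first step is to recognize $V':=\bigoplus_{\alpha\in\Delta_1^+}\mathbb{C}\,e_\alpha^*$ as the natural $\mathfrak{o}(n)$-module: the $\mathfrak{o}(n)$-weights of the generators $e_\alpha^*$ are precisely $\mp\epsilon_i$, with one extra copy of $0$ in type $B(m|1)$, and these match the weights of $\mathbb{C}^n$ exactly. The central $\mathbb{C}$-summand of $\mathfrak{g}_0\cong\mathfrak{o}(n)\oplus\mathbb{C}$ acts by $\delta$-weight $-1$ on each generator; this is the mechanism that produces the overall $-q\delta$ shift on a degree-$q$ polynomial. The second step is to identify $F\in S^2 V'$ with (a nonzero scalar multiple of) the $\mathfrak{o}(n)$-invariant symmetric bilinear form on $V'$: the summand $e_{\delta-\epsilon_i}^*e_{\delta+\epsilon_i}^*$ is the pairing of the two isotropic lines forming the hyperbolic plane of weights $\pm\epsilon_i$, and in type $B$ the term $(e_\delta^*)^2$ provides the anisotropic one-dimensional summand. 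The invariance of $F$ is automatic because $F$ arose from the $\mathfrak{g}_0$-equivariant coboundary map and the space of $\mathfrak{o}(n)$-invariants in $S^2 V'$ is one-dimensional.

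With these identifications in place, the classical theorem on spherical harmonics finishes the proof: for a non-degenerate $\mathfrak{o}(n)$-invariant quadratic form $F$ on $\mathbb{C}^n$, the quotient $\mathbb{C}[V']_q/F\cdot\mathbb{C}[V']_{q-2}$ is the space of degree-$q$ harmonic polynomials, which is the irreducible $\mathfrak{o}(n)$-module of highest weight $q\epsilon_1$. Combined with the $-q\delta$ shift from the central factor this yields $H^q(\mathfrak{u})\cong L^{(0)}_{q\epsilon_1-q\delta}$. The only non-routine point is the identification of $F$ with the invariant form; it amounts to a normalization check on the Chevalley basis, namely that $[e_{\delta-\epsilon_i},e_{\delta+\epsilon_i}]$ and, in type $B$, $[e_\delta,e_\delta]$ are nonzero multiples of $e_{2\delta}$ with coefficients matching those appearing in $F$. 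Everything else is standard representation theory of $\mathfrak{o}(n)$.
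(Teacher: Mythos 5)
Your proposal is correct and follows essentially the same route as the paper: the paper establishes (3.16)--(3.17) and then simply invokes "the representation theory of $\mathfrak{o}(n)$", which is exactly the spherical-harmonics decomposition $\mathbb{C}[V']_q = \mathcal{H}_q \oplus F\cdot\mathbb{C}[V']_{q-2}$ that you spell out, together with the identification of $F$ with the invariant quadratic form and the $-q\delta$ shift coming from the central factor of $\mathfrak{g}_0$. Your explicit flagging of the normalization of the bracket coefficients is a detail the paper leaves implicit in its choice of Chevalley basis, but it does not change the argument.
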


Applying Lemma 3.1 to the above formula, we get
\begin{corollary}The formal character of the trivial module $L_0$ can be expressed as
\begin{equation}
\mbox{ch } L_0=\sum_{i=0}^\infty(-1)^i\mbox{ch
}M_{i\epsilon_1-i\delta}.
\end{equation}
\end{corollary}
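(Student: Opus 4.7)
The corollary is essentially an immediate specialization of Lemma 3.1 using the cohomology computation of Proposition 3.4, so the plan is to verify that everything lines up cleanly when $\lambda=0$.

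First I would set $\lambda=0$ in Lemma 3.1, giving
\begin{equation*}
\mbox{ch } L_0 = \sum_{\nu}\sum_{i=0}^{\infty}(-1)^i[H^i(\mathfrak{u},L_0):L_\nu^{(0)}]\,\mbox{ch } M_\nu.
\end{equation*}
Since $L_0$ is the trivial one-dimensional module, the action terms (the last two lines of the differential formula (3.2)) vanish, and the complex $C^q(\mathfrak{u};L_0)$ collapses to $C^q(\mathfrak{u})$ with differential (3.15). Thus $H^i(\mathfrak{u},L_0)=H^i(\mathfrak{u})$ as $\mathfrak{g}_0$-modules.

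Next I would invoke Proposition 3.4, which identifies $H^q(\mathfrak{u})\cong L^{(0)}_{q\epsilon_1-q\delta}$ as a $\mathfrak{g}_0$-module. Consequently the multiplicity $[H^i(\mathfrak{u},L_0):L_\nu^{(0)}]$ equals $1$ when $\nu=i\epsilon_1-i\delta$ and $0$ otherwise. Substituting these multiplicities back into the sum above collapses the $\nu$-sum to a single term for each $i$, producing
\begin{equation*}
\mbox{ch } L_0 = \sum_{i=0}^{\infty}(-1)^i\,\mbox{ch } M_{i\epsilon_1-i\delta},
\end{equation*}
which is the desired identity.

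There is no real obstacle here beyond bookkeeping: the work has all been done in Proposition 3.4 and Lemma 3.1. The only point worth double-checking is that the triviality of $L_0$ genuinely reduces the general complex (3.1)--(3.2) to the simpler complex (3.14)--(3.15) used in computing $H^q(\mathfrak{u})$, so that the cohomology labels really coincide; this is clear from the definition of the differential once one observes that $\xi_s\cdot c = 0 = \eta_s\cdot c$ on the trivial module.
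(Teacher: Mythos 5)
Your proposal is correct and follows exactly the paper's route: the paper derives this corollary by "applying Lemma 3.1 to" Proposition 3.4, i.e., substituting the multiplicities $[H^i(\mathfrak{u},L_0):L_\nu^{(0)}]=\delta_{\nu,\,i\epsilon_1-i\delta}$ into the alternating sum of Lemma 3.1, just as you do. Your extra check that the trivial coefficients reduce the complex (3.1)--(3.2) to (3.14)--(3.15) is a sensible bit of diligence that the paper performs implicitly in Section 3.4.
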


\subsection{Convention} From now on, we will always simplify the character
$\mbox{ch } V$ to $V$. It cannot confuse us by context. For example,
we can rewrite (3.19) as
\begin{equation}
L_0=\sum_{i=0}^\infty(-1)^iM_{i\epsilon_1-i\delta}.
\end{equation}

\section{Tensor modules}

\subsection{Decomposition of $\mathfrak{o}(n)$-module $\mathcal {L}_\lambda\otimes \mathcal
{L}_{\epsilon_1}$} Recall (2.30) that
\begin{equation}
\mathcal {P}_{\mathfrak{o}(n)}=\{\mbox{dominant integral weights of
}\mathfrak{o}(n)\}
\end{equation} and that
$\mathcal {L}_\lambda$ is the irreducible $\mathfrak{o}(n)$-module
with highest weight $\lambda\in\mathcal {P}_{\mathfrak{o}(n)}$.

It is well known that $\mathcal {L}_{\epsilon_1}$ is the natural
representation of $\mathfrak{o}(n)$. The following lemma is standard
in the theory of classical simple Lie algebras.

\begin{lemma} (1)
For any $\lambda=\sum_{i=1}^m \lambda_i\epsilon_i\in\mathcal
{P}_{\mathfrak{o}(2m)}$ satisfying
\begin{equation}
\left\{\begin{array}{l}\lambda_1,\lambda_2,\ldots,\lambda_{m-1}\in\mathbb{Z}_{\geq0},
\lambda_m\in\mathbb{Z};\\
\lambda_1\geq
\lambda_2\geq\cdots\geq\lambda_{m-1}\geq|\lambda_m|,\end{array}\right.
\end{equation} it should be that
\begin{equation}
\mathcal {L}_\lambda\otimes \mathcal
{L}_{\epsilon_1}=\bigoplus_{\mu\in\mathcal
{P}_{\mathfrak{o}(2m)}\cap \{\lambda\pm\epsilon_i\mid 1\leq i\leq
m\}}\mathcal {L}_\mu.
\end{equation}
(2) For any $\lambda=\sum_{i=1}^m \lambda_i\epsilon_i\in\mathcal
{P}_{\mathfrak{o}(2m+1)}$ satisfying
\begin{equation}
\left\{\begin{array}{l}\lambda_1,\lambda_2,\ldots,\lambda_{m-1},\lambda_m\in\mathbb{Z}_{\geq0};\\
\lambda_1\geq
\lambda_2\geq\cdots\geq\lambda_{m-1}\geq\lambda_m,\end{array}\right.
\end{equation} it should be that
\begin{equation}
\mathcal {L}_\lambda\otimes \mathcal
{L}_{\epsilon_1}=\left\{\begin{array}{ll} \bigoplus_{\mu\in\mathcal
{P}_{\mathfrak{o}(2m)}\cap \{\lambda\pm\epsilon_i\mid 1\leq i\leq
m\}}\mathcal {L}_\mu, & \mbox{if}\quad \lambda_m=0;\\
\mathcal {L}_\lambda\oplus\bigoplus_{\mu\in\mathcal
{P}_{\mathfrak{o}(2m)}\cap \{\lambda\pm\epsilon_i\mid 1\leq i\leq
m\}}\mathcal {L}_\mu, & \mbox{if}\quad
\lambda_m\neq0.\end{array}\right.
\end{equation}
\end{lemma}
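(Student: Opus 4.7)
The plan is to invoke the Brauer--Weyl tensor product formula (Klimyk's rule) for the orthogonal Lie algebra. For any $\lambda \in \mathcal{P}_{\mathfrak{o}(n)}$ and any finite-dimensional $\mathfrak{o}(n)$-module $V$ with weight-multiplicity function $m_V$, one has
\begin{equation*}
\mathcal{L}_\lambda \otimes V \;=\; \sum_\nu m_V(\nu)\, \widetilde{\mathcal{L}}_{\lambda+\nu},
\end{equation*}
where $\widetilde{\mathcal{L}}_\sigma := \varepsilon(w)\, \mathcal{L}_{w(\sigma + \rho_{\mathfrak{o}(n)}) - \rho_{\mathfrak{o}(n)}}$ for the unique $w \in \mathcal{W}_{\mathfrak{o}(n)}$ making the argument dominant and regular (if such $w$ exists), and $\widetilde{\mathcal{L}}_\sigma := 0$ whenever $\sigma + \rho_{\mathfrak{o}(n)}$ lies on a Weyl chamber wall. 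The weights of $\mathcal{L}_{\epsilon_1}$ are $\pm \epsilon_i$ ($1 \leq i \leq m$), each of multiplicity one, together with the weight $0$ of multiplicity one in type $B_m$.

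For part~(1), I enumerate the candidates $\lambda \pm \epsilon_i$. Those lying in $\mathcal{P}_{\mathfrak{o}(2m)}$ contribute $\mathcal{L}_{\lambda \pm \epsilon_i}$ directly. The others fail dominance at some boundary, such as an equality $\lambda_{i-1} = \lambda_i$ or $\lambda_{m-1} = |\lambda_m|$; a direct computation then shows that two entries of $(\lambda \pm \epsilon_i) + \rho_{\mathfrak{o}(2m)}$ coincide or negate, placing it on the wall of the associated reflection $s_{\epsilon_{i-1} - \epsilon_i}$ or $s_{\epsilon_{m-1} \pm \epsilon_m}$, so the contribution vanishes. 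Summing the survivors yields $\bigoplus_{\mu \in \mathcal{P}_{\mathfrak{o}(2m)} \cap \{\lambda \pm \epsilon_i\}} \mathcal{L}_\mu$.

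For part~(2), the same wall-vanishing argument handles the $\pm \epsilon_i$ contributions using $\rho_{\mathfrak{o}(2m+1)}$ and $\mathcal{W}_{\mathfrak{o}(2m+1)}$. The new ingredient is the $\nu = 0$ contribution, which naively yields $\mathcal{L}_\lambda$. The key computation is that when $\lambda_m = 0$, one has $\bigl( (\lambda - \epsilon_m) + \rho_{\mathfrak{o}(2m+1)} \bigr)_m = -\tfrac{1}{2}$, so the short-root reflection $s_{\epsilon_m}$ sends this vector to $\lambda + \rho_{\mathfrak{o}(2m+1)}$ with sign $-1$; the resulting $-\mathcal{L}_\lambda$ exactly cancels the $\nu = 0$ contribution. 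When $\lambda_m > 0$, the weight $\lambda - \epsilon_m$ is itself dominant, no cancellation occurs, and $\mathcal{L}_\lambda$ survives with multiplicity one.

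The main obstacle is the bookkeeping of signs and wall-vanishings among the non-dominant candidates $\lambda + \nu$. The most delicate point is the type-$B_m$ cancellation between $\nu = 0$ and $\nu = -\epsilon_m$ when $\lambda_m = 0$; it reflects the special role of the short simple root $\epsilon_m$ in $B_m$, which has no analogue in $D_m$ and explains the dichotomy between the two cases of part~(2).
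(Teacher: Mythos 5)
Your proof is correct. The paper itself offers no argument here --- it simply states the lemma as ``standard in the theory of classical simple Lie algebras'' --- so there is nothing to compare against; your derivation via the Brauer--Klimyk formula is the canonical way to establish it, and the key computations check out: every non-dominant candidate $\lambda\pm\epsilon_i$ lands on a wall of the $D_m$ (resp.\ $B_m$) dot action except, in type $B_m$ with $\lambda_m=0$, the candidate $\lambda-\epsilon_m$, whose $m$-th $\rho$-shifted coordinate $-\tfrac12$ is regular and reflects back to $\lambda+\rho_{\mathfrak{o}(2m+1)}$ with sign $-1$, cancelling the $\nu=0$ contribution exactly as you say.
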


For convenience, we denote
\begin{equation}
\Upsilon_\lambda=\{\mu\mid \mathcal {L}_\mu \mbox{ is a summand of
}\mathcal {L}_\lambda\otimes \mathcal {L}_{\epsilon_1}\}.
\end{equation}

\subsection{Character of $M_\lambda\otimes
L_\delta$} Consider the natural representation
$L_{\delta}\simeq\mathbb{C}^{n|2}$ of $\mathfrak{osp}(n|2)$. It is
known that the set of all weights of $L_\delta$ is $\{\pm\delta,
\pm\epsilon_1,\ldots,\pm\epsilon_m\}$ if $n=2m$ and $\{0,\pm\delta,
\pm\epsilon_1,\ldots,\pm\epsilon_m\}$ if $n=2m+1$. The dual module
$L_\delta^*\simeq L_\delta$. Furthermore, as a
$\mathfrak{g}_0$-module, $L_\delta=L^{(0)}_\delta\oplus
L^{(0)}_{\epsilon_1}$. Thus by (2.21) and (4.6), we have that, as a
$\mathfrak{g}_0$-module,
\begin{eqnarray}
\quad\quad\quad M_\lambda\otimes
L_\delta&=&({U}(\mathfrak{u}^-)\otimes_{\mathbb{C}}
L^{(0)}_\lambda)\otimes
L_\delta\\\nonumber&\cong&{U}(\mathfrak{u}^-)\otimes_{\mathbb{C}}(L^{(0)}_\lambda\otimes
L_\delta)\\\nonumber
&=&{U}(\mathfrak{u}^-)\otimes_{\mathbb{C}}(L^{(0)}_{\lambda+\delta}\oplus
L^{(0)}_{\lambda-\delta}\oplus
\bigoplus_{\mu-\lambda_0\delta\in\Upsilon_{\lambda-\lambda_0\delta}}L^{(0)}_\mu)\\\nonumber&=&
({U}(\mathfrak{u}^-)\otimes_{\mathbb{C}}L^{(0)}_{\lambda+\delta})\oplus
({U}(\mathfrak{u}^-)\otimes_{\mathbb{C}}L^{(0)}_{\lambda-\delta})
\oplus\bigoplus_{\mu}({U}(\mathfrak{u}^-)\otimes_{\mathbb{C}}L^{(0)}_{\mu})\\\nonumber
&=& M_{\lambda+\delta}\oplus
M_{\lambda-\delta}\oplus\bigoplus_{\mu-\lambda_0\delta\in\Upsilon_{\lambda-\lambda_0\delta}}
M_{\mu}.
\end{eqnarray}

\begin{remark}
In the above calculation, we use the so-called ``Tensor Identity''
in the two module isomorphisms, whose proof is as the same as
Proposition 1.7 in \cite{GL}.
\end{remark}

Hence
\begin{equation}
\mbox{ch }M_\lambda\otimes L_\delta=\mbox{ch
}M_{\lambda+\delta}+\mbox{ch }
M_{\lambda-\delta}+\sum_{\mu-\lambda_0\delta\in\Upsilon_{\lambda-\lambda_0\delta}}
\mbox{ch }M_{\mu}.
\end{equation}

Recall the short notation introduced in 3.5. We can rewrite (4.8) by
Lemma 4.1 to obtain the following lemma.

\begin{lemma} 1). If $\mathfrak{g}=\mathfrak{osp}(2m|2)$, then
for any generalized Verma module $M_\lambda$,
\begin{equation}
M_\lambda\otimes
L_\delta=M_{\lambda+\delta}+M_{\lambda-\delta}+\sum_{\mu\in\{\lambda\pm\epsilon_i\mid
1\leq i\leq m\}, \mu-\lambda_0\delta \in \mathcal
{P}_{\mathfrak{0}(2m)}}M_{\mu}.
\end{equation}
2). If $\mathfrak{g}=\mathfrak{osp}(2m+1|2)$, then for any
generalized Verma module $M_\lambda$,
\begin{equation}
M_\lambda\otimes
L_\delta=\left\{\begin{array}{ll}M_{\lambda+\delta}+M_{\lambda-\delta}+\sum_{\mu\in\{\lambda\pm\epsilon_i\mid
1\leq i\leq m\}, \mu-\lambda_0\delta \in \mathcal
{P}_{\mathfrak{0}(2m+1)}}M_{\mu},& (\lambda_m=0);\\
M_{\lambda+\delta}+M_{\lambda-\delta}+\sum_{\mu\in\{\lambda,\lambda\pm\epsilon_i\mid
1\leq i\leq m\}, \mu-\lambda_0\delta \in \mathcal
{P}_{\mathfrak{0}(2m+1)}}M_{\mu},&
(\lambda_m\neq0).\end{array}\right.
\end{equation}
\end{lemma}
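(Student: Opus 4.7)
The plan is to combine the character identity already established as equation (4.8) with the explicit description of the set $\Upsilon_\nu$ provided by Lemma 4.1. The essential analytical content is already in place: the Tensor Identity and the $\mathfrak{g}_0$-decomposition of the natural module $L_\delta$ yield
\begin{equation*}
M_\lambda\otimes L_\delta = M_{\lambda+\delta}+M_{\lambda-\delta}+\sum_{\mu-\lambda_0\delta\in\Upsilon_{\lambda-\lambda_0\delta}}M_\mu,
\end{equation*}
so the remaining work is purely combinatorial: rewrite the indexing set $\{\mu : \mu-\lambda_0\delta\in\Upsilon_{\lambda_{\mathfrak{o}(n)}}\}$ in the explicit form appearing in (4.9) and (4.10).

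First I would observe that for any $\lambda\in\mathcal{P}$, the $\mathfrak{o}(n)$-part $\lambda_{\mathfrak{o}(n)} = \sum_{i=1}^m \lambda_i\epsilon_i$ automatically lies in $\mathcal{P}_{\mathfrak{o}(n)}$, because the constraints in Theorem 2.1 on $\lambda_1,\ldots,\lambda_m$ are precisely the dominance constraints for $\mathfrak{o}(n)$. Hence Lemma 4.1 applies directly with $\nu=\lambda_{\mathfrak{o}(n)}$, and substituting its description of $\Upsilon_{\lambda_{\mathfrak{o}(n)}}$ into the summation above translates the condition $\mu-\lambda_0\delta\in\Upsilon_{\lambda_{\mathfrak{o}(n)}}$ into $\mu\in\{\lambda\pm\epsilon_i\mid 1\leq i\leq m\}$ with the integrality constraint $\mu-\lambda_0\delta\in\mathcal{P}_{\mathfrak{o}(n)}$.

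Carrying this out case by case: for $\mathfrak{g}=\mathfrak{osp}(2m|2)$, Lemma 4.1(1) gives exactly (4.9). For $\mathfrak{g}=\mathfrak{osp}(2m+1|2)$ with $\lambda_m=0$, Lemma 4.1(2) gives the first branch of (4.10). For $\mathfrak{g}=\mathfrak{osp}(2m+1|2)$ with $\lambda_m\neq 0$, Lemma 4.1(2) adds one extra summand $\mathcal{L}_{\lambda_{\mathfrak{o}(n)}}$ to $\Upsilon_{\lambda_{\mathfrak{o}(n)}}$, contributing the additional term $M_\lambda$ which produces the second branch of (4.10). In each case one just reads off the claim.

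There is essentially no obstacle in this proof: the nontrivial ingredients (Tensor Identity, the $\mathfrak{g}_0$-decomposition $L_\delta\simeq L^{(0)}_\delta\oplus L^{(0)}_{-\delta}\oplus L^{(0)}_{\epsilon_1}$, and the classical branching rule for $\mathcal{L}_\lambda\otimes\mathcal{L}_{\epsilon_1}$) are already available. The only mild point of care is to verify that the two distinguished summands $M_{\lambda\pm\delta}$ arising from the $L^{(0)}_{\pm\delta}$ pieces of $L_\delta$ remain separate from the $M_\mu$'s arising from the $L^{(0)}_{\epsilon_1}$ piece; this is immediate since the former carry a nonzero $\delta$-shift while the latter do not, so no collisions occur and no overcounting has to be corrected.
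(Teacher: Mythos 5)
Your proposal is correct and follows essentially the same route as the paper: the paper obtains Lemma 4.3 precisely by rewriting the character identity (4.8) — itself a consequence of the Tensor Identity and the $\mathfrak{g}_0$-decomposition of $L_\delta$ — using the branching rule of Lemma 4.1, which is exactly the combinatorial translation you carry out. Your added check that the $\pm\delta$-shifted summands cannot collide with the $\epsilon_i$-shifted ones, and your (correct) three-summand form of the $\mathfrak{g}_0$-decomposition of $L_\delta$, are consistent with the computation displayed in (4.7).
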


\subsection{Weights set $\mathcal{P}_\lambda$}
Denote
\begin{equation}
\mathcal{P}_\lambda=\mathcal{P}_{\lambda^+}\cup\mathcal{P}_{\lambda^-}
\end{equation}
where
\begin{equation}
\mathcal{P}_{\lambda^+}=\{\lambda+\delta,\lambda+\epsilon_1,\ldots,\lambda+\epsilon_{m-1},\lambda+\epsilon_m
(\mbox{if } \lambda_m\geq 0),\lambda-\epsilon_m (\mbox{if }
\lambda_m\leq 0)\}\cap \mathcal{P},
\end{equation}
\begin{equation}
\mathcal{P}_{\lambda^-}=\{\lambda-\delta,\lambda-\epsilon_1,\ldots,\lambda-\epsilon_{m-1},\lambda+\epsilon_m
(\mbox{if } \lambda_m<0),\lambda-\epsilon_m (\mbox{if }
\lambda_m>0)\}\cap \mathcal{P}
\end{equation} if $\mathfrak{g}=\mathfrak{osp}(2m|2)$, and
\begin{equation}
\mathcal{P}_{\lambda^+}=\left\{\begin{array}{ll}
\{\lambda+\delta,\lambda+\epsilon_1,\ldots,\lambda+\epsilon_{m-1},\lambda+\epsilon_m\}\cap
\mathcal{P}, & (\lambda_m=0);\\
\{\lambda,\lambda+\delta,\lambda+\epsilon_1,\ldots,\lambda+\epsilon_{m-1},\lambda+\epsilon_m\}\cap
\mathcal{P}, & (\lambda_m\neq0).\end{array}\right.,
\end{equation}
\begin{equation}
\mathcal{P}_{\lambda^-}=\left\{\begin{array}{ll}
\{\lambda-\delta,\lambda-\epsilon_1,\ldots,\lambda-\epsilon_{m-1},\lambda-\epsilon_m\}\cap
\mathcal{P}, & (\lambda_m=0);\\
\{\lambda,\lambda-\delta,\lambda-\epsilon_1,\ldots,\lambda-\epsilon_{m-1},\lambda-\epsilon_m\}\cap
\mathcal{P}, & (\lambda_m\neq0).\end{array}\right.
\end{equation}
if $\mathfrak{g}=\mathfrak{osp}(2m+1|2)$.

\begin{lemma}
Suppose $\lambda\in \mathcal{P}$ is an atypical weight. Take any
$\mu,\nu\in \mathcal{P}_\lambda$ with $\mu\neq\nu$.

1). For $\mathfrak{g}=\mathfrak{osp}(2m|2)$, $\mu\sim\nu$ if and
only if $\lambda_0=m-1, \lambda_{m-1}=\lambda_m=0$ and
$\mu,\nu=\lambda\pm\delta$.

2). For $\mathfrak{g}=\mathfrak{osp}(2m+1|2)$, it must be
$\mu\not\sim\nu$.
\end{lemma}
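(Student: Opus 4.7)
The plan is to read off a combinatorial invariant of the $\sim$-equivalence class in coordinate form and then verify it pairwise on $\mathcal{P}_\lambda$. Write $\lambda+\rho=(a_0;a_1,\ldots,a_m)$ in the $\delta\epsilon$-basis, so that $a_0=\lambda_0+1-m$ (even case) or $\lambda_0+\tfrac{1}{2}-m$ (odd case), and $a_i=\lambda_i+m-i$ (resp.\ $+\tfrac{1}{2}$). Atypicality of $\lambda$ amounts to $a_0=\pm a_j$ for some $j$, and $a_0$ is an integer in the even case, a half-integer in the odd case. For each $\mu\in\mathcal{P}_\lambda$, the shifted coordinates of $\mu+\rho$ differ from $(a_0;a_1,\ldots,a_m)$ by $\pm 1$ in exactly one entry (or not at all in the odd-$n$, $\lambda_m\neq 0$ case where $\mu=\lambda$ is allowed).

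First I would verify that the $\sim$-equivalence class of $\mu$ is captured by the multiset $\{a_0^2,a_1^2,\ldots,a_m^2\}$ modulo the cancellation of atypical pairs $a_0^2=a_j^2$. This reduces to checking each generator: $t_w$ for $w\in\mathcal{W}=\mathcal{W}_{\mathfrak{o}(n)}\times\mathcal{W}_{\mathfrak{sl}(2)}$ acts on $(a_1,\ldots,a_m)$ by signed permutations (with the even-parity restriction in type $D$) and sign-flips $a_0$, which preserves the multiset of squares; while $t_{\delta\pm\epsilon_j}$ applied to an $\alpha$-atypical weight shifts the atypical pair $(a_0,\pm a_j)$ by $(1,\mp 1)$, preserving the invariant after cancellation.

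Next, for each pair $(\mu,\nu)\in\mathcal{P}_\lambda^2$ with $\mu\neq\nu$, I would compare invariants. Dominance forces strict inequalities $a_1>a_2>\cdots>a_{m-1}$ and pins down the sign of $a_m$, so multiset equality under a $\pm 1$ perturbation is extremely restrictive. A direct enumeration of the possible pairs shows that the only surviving coincidence is $\{\mu,\nu\}=\{\lambda+\delta,\lambda-\delta\}$: their $a_i$'s agree, so we need $(a_0+1)^2=(a_0-1)^2$, i.e., $a_0=0$. In the even case this is $\lambda_0=m-1$, and the dominance requirement $\lambda-\delta\in\mathcal{P}$ further forces $\lambda_{m-1}=\lambda_m=0$; since $a_m=0=a_0$ then witnesses atypicality, the hypothesis of the lemma is compatible. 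In the odd case $a_0$ is a half-integer, so $a_0=0$ has no integer solution for $\lambda_0$, yielding (2).

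The main obstacle will be the bookkeeping of atypical-pair cancellations when $\lambda$ has multiple atypicalities (for instance, in type $D$ with $a_0=a_m=0$ the weight $\lambda$ is both $(\delta+\epsilon_m)$- and $(\delta-\epsilon_m)$-atypical, allowing two cancellations), and verifying that dominance of both $\mu$ and $\nu$ really does rule out the sporadic cases where $\pm 1$ perturbations might conspire to produce equivalent multisets via a hidden Weyl symmetry of the $\mathfrak{o}(n)$-factor. I expect a clean case-split by the parity of $n$, by the sign of $\lambda_m$ (in the $D$ case), and by whether the shifted index matches the atypicality index $j$; in each case the strict interlacing in dominant coordinates closes the argument.
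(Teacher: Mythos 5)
Your multiset-of-squares invariant is essentially the same tool the paper uses implicitly (the paper works directly with the atypicality equations $1-m+\lambda_0=\pm(m-i+\lambda_i)$ and compares them across $\mathcal{P}_\lambda$ rather than naming an invariant), so the overall route is not really different. But two gaps remain. The smaller one: equality of invariants is only a \emph{necessary} condition for $\mu\sim\nu$, so your argument never establishes the ``if'' direction of part (1); you still need to exhibit a chain, e.g.\ note that for $\lambda_0=m-1$ one has $\lambda-\delta=(\lambda+\delta)^\delta=t_w(\lambda+\delta)$ with $w$ the $\mathfrak{sl}(2)$-reflection, which is a one-line fix inside your framework.

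The serious gap is exactly the ``obstacle'' you defer: the coincidence of multisets resolved by ``a hidden Weyl symmetry of the $\mathfrak{o}(n)$-factor'' cannot be ruled out by dominance. In type $D$ take $\mu=\lambda+\epsilon_m$ and $\nu=\lambda-\epsilon_m$ with $\lambda_m=0$ and $\lambda_{m-1}\geq 1$; by (4.12) both lie in $\mathcal{P}_{\lambda^+}$, and their squared shifted coordinates agree entry by entry, so your invariant is silent. Worse, when $\lambda$ is $(\delta+\epsilon_k)$-atypical with $k<m$ these two weights \emph{are} linked under the relation $\sim$ of \S 2.7: apply $t_{-(\delta+\epsilon_k)}$ until the atypical pair of shifted coordinates reaches $0$, then apply the $W(D_m)$-element flipping the signs in positions $k$ and $m$ (legal because the $k$-th entry is now $0$, so only the $m$-th sign effectively changes), and climb back up. Concretely, for $\mathfrak{g}=\mathfrak{osp}(4|2)$ and $\lambda=3\delta+\epsilon_1$ (atypical, $(\lambda+\rho,\delta+\epsilon_1)=0$), the chain of shifted coordinates $(2;2,1)\rightarrow(1;1,1)\rightarrow(0;0,1)\rightarrow(0;0,-1)\rightarrow(1;1,-1)\rightarrow(2;2,-1)$ shows $\lambda+\epsilon_2\sim\lambda-\epsilon_2$ even though $\lambda_0=3\neq m-1$; consistently, Theorem 5.5 sends both of these weights to $\varphi(\cdot)=\delta$, so $M_\delta$ occurs in both characters and Remark 3.2 forces them into one block. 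So for this family of pairs no refinement of the invariant can ``close the argument'': the asserted non-linkage fails for $\sim$ as literally defined. (The paper's own proof has the same blind spot, since its case analysis only admits the pairs $\{\lambda+\delta,\lambda+\epsilon_{k'}\}$ and $\{\lambda-\delta,\lambda-\epsilon_{k'}\}$ and never tests $\{\lambda+\epsilon_m,\lambda-\epsilon_m\}$.) Any honest completion of your plan must either exclude this pair from the statement or replace $\sim$ by a finer linkage; as written, the enumeration step would fail there.
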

\begin{proof} We prove the statement only for the case of
$\mathfrak{g}=\mathfrak{osp}(2m|2)$ here. The proof for the case of
$\mathfrak{g}=\mathfrak{osp}(2m+1|2)$ is similar.

When $\lambda_0=m-1$, it must be that $\lambda_m=0$ and $\lambda$ is
$(\delta\pm\epsilon_m)$-atypical by (2.26). We can check that only
$\lambda+\delta\sim\lambda-\delta$ in
$\{\lambda+\xi\mid\xi=\pm\delta,\pm\epsilon_1,\ldots,\pm\epsilon_m\}$.
And if $\lambda-\delta\in\mathcal {P}_\lambda$, then
$\lambda_{m-1}=0$.

Assume $\lambda_0\neq m-1$.

If $\lambda$ is $(\delta-\epsilon_k)$-atypical, then $0\neq
1-m+\lambda_0=-m+k-\lambda_k\leq0$. Suppose that there exist
$\mu,\nu\in \mathcal{P}_\lambda$ such that $\mu\sim\nu$. Then it
must be that $\mu,\nu$ are both $(\delta-\epsilon_{k'})$-atypical
for a certain $k'$ since $1-m+\lambda_0<0$. That is,
$\mu=\lambda+\delta, \nu=\lambda+\epsilon_{k'}$ or
$\mu=\lambda-\delta, \nu=\lambda-\epsilon_{k'}$. For the first case,
$2-m+\lambda_0=-m+k'-\lambda_{k'} \ \Rightarrow
k'-\lambda_{k'}=k-\lambda_k+1\ \Rightarrow k'=k+1,
\lambda_{k'}=\lambda_k$. It is impossible since
$\nu=\lambda+\epsilon_{k'}\in \mathcal{P}$. Similarly, it is
impossible for the second case.

If $\lambda$ is $(\delta+\epsilon_k)$-atypical, it is also the same
to show there is no $\mu,\nu\in\mathcal {P}_\lambda$ such that
$\mu\neq\nu$ and $\mu\sim\nu$.
\end{proof}

\subsection{Submodules and quotient modules of $L_\lambda\otimes
L_\delta$} The results stated in this subsection hold for both the
cases of $\mathfrak{g}=\mathfrak{osp}(2m|2)$ and
$\mathfrak{g}=\mathfrak{osp}(2m+1|2)$. Since the proofs for these
two cases are similar to each other, we shall always consider only
the case of $\mathfrak{osp}(2m|2)$ in the arguments.

\begin{lemma} For any $\mu\in\mathcal
{P}_{\lambda^+}$,
\begin{equation}
[L_\lambda\otimes L_\delta: L_\mu]=1.
\end{equation} Particularly, if $\mu$ is typical, then $L_\mu$ is a direct
summand in $L_\lambda\otimes L_\delta$, hence
\begin{equation}
\dim\mbox{Hom}_{\mathfrak{g}}(L_\lambda\otimes L_\delta,
L_\mu)=\dim\mbox{Hom}_{\mathfrak{g}}( L_\mu,L_\lambda\otimes
L_\delta)=1.
\end{equation}
\end{lemma}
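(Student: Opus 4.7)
The plan is to sandwich the multiplicity $[L_\lambda\otimes L_\delta:L_\mu]$ between $1$ and $1$, and then to deduce the direct summand statement for typical $\mu$ from the standard projectivity of typical irreducibles.

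For the upper bound, I tensor the canonical surjection $M_\lambda\twoheadrightarrow L_\lambda$ with the finite-dimensional module $L_\delta$ to obtain a surjection $M_\lambda\otimes L_\delta\twoheadrightarrow L_\lambda\otimes L_\delta$, which yields $[L_\lambda\otimes L_\delta:L_\mu]\leq[M_\lambda\otimes L_\delta:L_\mu]$. By Lemma 4.2, $M_\lambda\otimes L_\delta$ admits a filtration whose associated graded is a direct sum of generalized Verma modules $M_\nu$, each appearing once, over an indexing set containing $\mathcal{P}_\lambda$. A composition factor $L_\mu$ can occur in $M_\nu$ only when $\mu\leq\nu$ in the natural partial order and $\mu\sim\nu$. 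The term $\nu=\mu$ contributes $[M_\mu:L_\mu]=1$. For any other $\nu$ in the indexing set, a direct inspection of the difference $\mu-\nu$ (as a difference of weights of $L_\delta$) shows either $\mu\not\leq\nu$ directly, or $\nu\in\mathcal{P}_\lambda\setminus\{\mu\}$; in the latter case Lemma 4.4 rules out $\mu\sim\nu$ except in the exceptional $\mathfrak{osp}(2m|2)$ subcase $\{\mu,\nu\}=\{\lambda\pm\delta\}$, where $\mu=\lambda+\delta$ yields $\mu-\nu=2\delta>0$ and once more $\mu\not\leq\nu$. Therefore $[M_\lambda\otimes L_\delta:L_\mu]=1$.

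For the lower bound, the case $\mu=\lambda+\delta$ is immediate: letting $v_\lambda\in L_\lambda$ and $v_\delta\in L_\delta$ denote highest weight vectors, the tensor $v_\lambda\otimes v_\delta$ is annihilated by every positive root vector of $\mathfrak{g}$ (since each factor is) and carries weight $\lambda+\delta$, so it generates a submodule whose irreducible quotient is $L_{\lambda+\delta}$. For the remaining $\mu\in\mathcal{P}_{\lambda^+}$, the upper bound already restricts the multiplicity to $\{0,1\}$; to exclude $0$ I plan a weight-space count combined with the bound on higher factors. The weight $\mu$ appears in $L_\lambda\otimes L_\delta$ via $v_\lambda\otimes v_{\mu-\lambda}$ (as $\mu-\lambda$ is a weight of $L_\delta$), while the contribution of composition factors $L_{\mu'}$ with $\mu'>\mu$ (each of multiplicity at most $1$ by the upper bound applied at $\mu'$) cannot exhaust the weight-$\mu$ subspace. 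Equivalently, one may construct an explicit highest weight vector of weight $\mu$ in $L_\lambda\otimes L_\delta$ using the $\mathfrak{g}_0$-decomposition of $L^{(0)}_\lambda\otimes L^{(0)}_{\epsilon_1}$ supplied by Lemma 4.1 together with the tensor identity invoked in the proof of Lemma 4.2.

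For the concluding ``particularly'' statement, when $\mu$ is typical the irreducible $L_\mu$ is both projective and injective in the category of finite-dimensional $\mathfrak{g}$-modules (Kac \cite{K3}); hence any occurrence of $L_\mu$ as a composition factor splits off as a direct summand, and the multiplicity $1$ immediately gives $\dim\mbox{Hom}_{\mathfrak{g}}(L_\lambda\otimes L_\delta,L_\mu)=\dim\mbox{Hom}_{\mathfrak{g}}(L_\mu,L_\lambda\otimes L_\delta)=1$. The main obstacle is the lower bound for $\mu\neq\lambda+\delta$: while the upper bound cuts the multiplicity down to $\{0,1\}$, ruling out the value $0$ requires either a delicate weight-cancellation argument or a hands-on construction of singular vectors via the $\mathfrak{g}_0$-level decomposition, neither of which is entirely routine.
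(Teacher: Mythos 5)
Your upper bound is sound and is essentially one half of the paper's own argument: the paper likewise reduces to the generalized Verma decomposition $M_\lambda\otimes L_\delta=\bigoplus_\nu M_\nu$ of Lemma 4.3 together with the block separation of Lemma 4.4 (though it phrases this as computing the coefficient of $\mbox{ch}\,M_\mu$ rather than through a Verma flag, and it is careful to defer the case $\lambda^\delta\in\mathcal{P}$ to the explicit computations of Section 5 --- see Remark 5.4 --- precisely because the linkage analysis alone does not settle that case). The genuine gap is the lower bound for $\mu\neq\lambda+\delta$, which you yourself flag as unresolved. The weight-space count you propose is circular: deciding whether the factors $L_{\mu'}$ with $\mu'>\mu$ exhaust the weight-$\mu$ subspace of $L_\lambda\otimes L_\delta$ requires knowing $\dim(L_{\mu'})_\mu$ and $\dim(L_\lambda)_{\mu-\gamma}$, i.e.\ the very characters the paper is in the process of computing; and an explicit $\mathfrak{g}$-singular vector of weight $\mu$ is not available by elementary means (a $\mathfrak{g}_0$-highest weight vector supplied by Lemma 4.1 need not be annihilated by $\mathfrak{u}$).

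The missing idea is the one the paper actually uses: the unitriangular expansion $\mbox{ch}\,L_\tau=\mbox{ch}\,M_\tau+\sum_{\sigma<\tau}b_{\tau\sigma}\,\mbox{ch}\,M_\sigma$ of Remark 3.3. Writing $\mbox{ch}(L_\lambda\otimes L_\delta)=\sum_\nu c_\nu\,\mbox{ch}\,M_\nu=\sum_\tau[L_\lambda\otimes L_\delta:L_\tau]\,\mbox{ch}\,L_\tau$ and inverting the unitriangular matrix gives $[L_\lambda\otimes L_\delta:L_\mu]=c_\mu-\sum_{\tau>\mu,\,\tau\sim\mu}[L_\lambda\otimes L_\delta:L_\tau]\,b_{\tau\mu}$. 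The paper shows, using that $\lambda\not\sim\lambda-\xi$ for $\xi\in\{\delta-\epsilon_j,\epsilon_k-\epsilon_l\}$ when $\lambda^\delta\notin\mathcal{P}$, that the lower Verma terms of $\mbox{ch}\,L_\lambda$ contribute no $M_\mu$ with $\mu\in\mathcal{P}_{\lambda^+}$ after tensoring, so $c_\mu=1$, and that no composition factor $L_\tau$ with $\tau>\mu$ and $\tau\sim\mu$ occurs; hence the multiplicity is exactly $1$, giving the upper and lower bounds in one stroke. Your treatment of the ``particularly'' clause via projectivity and injectivity of typical irreducibles is correct (and more explicit than the paper, which leaves this step implicit), and the reference to Lemma 4.2 where Lemma 4.3 is meant is a harmless slip.
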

\begin{proof}
We prove the case of $\mathfrak{g}=\mathfrak{osp}(2m|2)$ only.

For any $\lambda\in\mathcal {P}$ with $\lambda^\delta\in\mathcal
{P}$, we shall get the statement by a direct calculation in next
section (see Remark 5.4).

If $\lambda\in\mathcal {P}$ with $\lambda^\delta\not\in\mathcal
{P}$. Without loss of generality, we assume $\lambda_m\geq0$. Now
$\lambda$ is either typical or $(\delta+\epsilon_i)$-atypical
($1\leq i\leq m$). In both cases, we have $\lambda\not\sim
\lambda-\xi$ for any
$\xi\in\{\delta-\epsilon_j,\epsilon_k-\epsilon_l\mid1\leq j\leq
m,1\leq k<l\leq m\}$. Thus for any $\nu\sim\lambda$ with
$\nu<\lambda$, it should be that
\begin{equation}
\{\nu\pm\delta,\nu\pm\epsilon_j\mid 1\leq j\leq m\}\cap\mathcal
{P}_{\lambda^+}=\emptyset.
\end{equation}
Therefore if we multiply $L_\delta$ on the both sides of (3.13) and
calculate the right side by Lemma 4.3, we can obtain that the
coefficients of $M_\mu$ ($\forall \mu\in\mathcal {P}_{\lambda^+}$)
are all exactly $1$. Hence by Remark 3.3 again we get
\begin{equation}
[L_\lambda\otimes L_\delta: L_\mu]=1 \quad\mbox{for any
$\mu\in\mathcal {P}_{\lambda^+}$.}
\end{equation}
\end{proof}

\begin{lemma}
For any $\lambda\in \mathcal{P}$, all irreducible submodules and
quotient modules of $L_\lambda\otimes L_\delta$ have to be with form
$L_\mu$ ($\mu\in \mathcal{P}_\lambda$).
\end{lemma}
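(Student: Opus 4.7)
The plan is to treat quotients and submodules separately. For quotients, I would use that $M_\lambda\otimes L_\delta$ carries a filtration by generalized Verma modules; for submodules, I would invoke the self-duality $L_\delta^*\cong L_\delta$ to reduce to the quotient case.

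Concretely, as a $\mathfrak{g}_0\oplus\mathfrak{u}$-module, $L_\delta$ admits the submodule filtration $0\subset L^{(0)}_\delta\subset L^{(0)}_\delta\oplus L^{(0)}_{\epsilon_1}\subset L_\delta$, with successive subquotients $L^{(0)}_\delta$, $L^{(0)}_{\epsilon_1}$, $L^{(0)}_{-\delta}$. Each carries trivial $\mathfrak{u}$-action on the quotient, because $\mathfrak{u}=\mathfrak{g}_1\oplus\mathfrak{g}_{2\delta}$ raises weights into the piece just below the current quotient (which has already been quotiented out). Applying the exact functor $\mathrm{Ind}^{\mathfrak{g}}_{\mathfrak{g}_0\oplus\mathfrak{u}}(L^{(0)}_\lambda\otimes -)$ and invoking the tensor identity as in (4.7), one obtains a filtration of $M_\lambda\otimes L_\delta$ whose successive subquotients are precisely the generalized Verma modules $M_\mu$ listed in Lemma 4.3.

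Now suppose $\varphi\colon L_\lambda\otimes L_\delta\twoheadrightarrow L_\nu$. Composing with the surjection $M_\lambda\otimes L_\delta\twoheadrightarrow L_\lambda\otimes L_\delta$ yields a surjection of $M_\lambda\otimes L_\delta$ onto $L_\nu$; taking the smallest index $i$ for which this map is nonzero on the $i$-th filtered piece, it factors nontrivially through a direct summand $M_\mu$ for some $\mu$ in Lemma 4.3. Since $L_\mu$ is the unique irreducible quotient of $M_\mu$, $\nu=\mu$. Finite-dimensionality of $L_\nu$ forces $\nu\in\mathcal{P}$, and a direct comparison shows that the set of $\mu$'s from Lemma 4.3 that lie in $\mathcal{P}$ coincides with $\mathcal{P}_\lambda$. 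For a submodule $L_\mu\hookrightarrow L_\lambda\otimes L_\delta$, the adjunction
\[
\mathrm{Hom}_{\mathfrak{g}}(L_\mu,\,L_\lambda\otimes L_\delta)\cong\mathrm{Hom}_{\mathfrak{g}}(L_\mu\otimes L_\delta,\,L_\lambda)
\]
coming from $L_\delta^*\cong L_\delta$ shows $L_\lambda$ is a quotient of $L_\mu\otimes L_\delta$, so $\lambda\in\mathcal{P}_\mu$ by the quotient case. The relation ``$\mu$ is obtained from $\lambda$ by adding or subtracting one of $\delta$ or $\epsilon_i$'' is manifestly symmetric, so $\mu\in\mathcal{P}_\lambda$.

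The main obstacle is the first step: one must verify carefully that the indicated subspaces of $L_\delta$ really form a $\mathfrak{g}_0\oplus\mathfrak{u}$-submodule filtration (using that $\mathfrak{u}$ strictly raises weights), and that applying $\mathrm{Ind}(L^{(0)}_\lambda\otimes-)$ to its associated graded produces generalized Verma modules at the \emph{module} level (rather than merely a character identity). Once this structural refinement of Lemma 4.3 is in place, the remainder is a routine filtration chase followed by a one-line duality argument.
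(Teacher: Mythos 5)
Your argument is correct, but it takes a genuinely different route from the paper's. The paper works at the level of characters: it takes the expansion $L_\lambda=M_\lambda+\sum_{\nu<\lambda}\cdots$ from Remark 3.3, multiplies by $L_\delta$ using Lemma 4.3, and rules out unwanted $\mu$ by a three-way case analysis on $\operatorname{ht}\mu$ versus $\operatorname{ht}\lambda$ (the case $\operatorname{ht}\mu=\operatorname{ht}\lambda$ being the delicate one); moreover it only carries this out when $\lambda^\delta\notin\mathcal{P}$, deferring the remaining case to the explicit computations of Section 5 (Remark 5.4), which forces a careful bootstrapping between Sections 4 and 5. You instead upgrade Lemma 4.3 from a character identity to a module-level statement: the $\mathbb{Z}$-grading of $L_\delta$ gives a $\mathfrak{g}_0\oplus\mathfrak{u}$-stable filtration with $\mathfrak{u}$-trivial subquotients $L^{(0)}_\delta$, $L^{(0)}_{\epsilon_1}$, $L^{(0)}_{-\delta}$, and exactness of induction plus the tensor identity then yields a generalized Verma flag on $M_\lambda\otimes L_\delta$. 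An irreducible quotient of $L_\lambda\otimes L_\delta$ lifts to $M_\lambda\otimes L_\delta$, factors through the first filtration layer on which it is nonzero, and is therefore the unique irreducible quotient of some $M_\mu$ from that flag; finite-dimensionality forces $\mu\in\mathcal{P}$, hence $\mu\in\mathcal{P}_\lambda$, and submodules reduce to quotients by $L_\delta^*\cong L_\delta$ and the symmetry of $\mu\in\mathcal{P}_\lambda\Leftrightarrow\lambda\in\mathcal{P}_\mu$. This is uniform in $\lambda$ (no typicality or $\lambda^\delta$ case split, no height trichotomy) and would actually let the paper use Lemmas 4.5--4.6 freely without the deferral in Remark 5.4; the price is that you must establish the Verma-flag refinement, which the paper never does (its (4.7) is a $\mathfrak{g}_0$-module, i.e.\ character-level, decomposition only) --- though, as you note, this verification is standard. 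One small imprecision: the layer $F_i/F_{i-1}$ through which your map factors is a subquotient of the flag, not a direct summand of $M_\lambda\otimes L_\delta$; your ``smallest index $i$'' argument already handles this correctly, so nothing is lost.
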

\begin{proof} For any $\lambda\in\mathcal {P}$ with $\lambda^\delta\in\mathcal
{P}$, we shall obtain the statement by a direct calculation in next
section (see Remark 5.4).

Suppose $\lambda\in\mathcal {P}$ with $\lambda^\delta\not\in\mathcal
{P}$. Without loss of generality, we assume $\lambda_m\geq0$. In
this case, $\lambda$ is either typical or
$(\delta+\epsilon_i)$-atypical ($1\leq i\leq m$).

For any $\nu\sim\lambda$ with $\nu<\lambda$, there exists no weight
$\mu\in\{\nu\pm\delta,\nu\pm\epsilon_i\mid 1\leq i\leq m\}$ such
that $\mu\in\mathcal {P}$ and $\mbox{ht } \mu>\mbox{ht }\lambda$.
Thus if we multiply $L_\delta$ on the right side of (3.13), then the
coefficients of $M_\mu$, where $\mbox{ht }\mu>\mbox{ht }\lambda$, is
nonzero if and only if $\mu\in\mathcal {P}_{\lambda^+}$. Therefore
by Remark 3.3
\begin{equation}
[L_\lambda\otimes L_\delta : L_\mu]=0 \quad \mbox{for any
$\mu\in\mathcal {P}\setminus \mathcal {P}_\lambda$ with $\mbox{ht
}\mu>\mbox{ht }\lambda$.}
\end{equation}

Take any irreducible submodule or quotient module $L_\mu$ of
$L_\lambda\otimes L_\delta$.

Suppose $\mbox{ht }\mu>\mbox{ht }\lambda$. It must be that
$\mu\in\mathcal {P}_\lambda$ because of (4.20).

Suppose $\mbox{ht }\mu<\mbox{ht }\lambda$. Since
\begin{equation}
\mbox{Hom}_\mathfrak{g}(L_\lambda\otimes L_\delta,
L_\mu)\simeq\mbox{Hom}_\mathfrak{g}(L_\lambda, L_\mu\otimes
L_\delta)
\end{equation} and
\begin{equation}
\mbox{Hom}_\mathfrak{g}(L_\mu,L_\lambda\otimes L_\delta
)\simeq\mbox{Hom}_\mathfrak{g}(L_\mu\otimes L_\delta, L_\lambda),
\end{equation}
it should be that $L_\lambda$ is an irreducible submodule or
quotient module of $L_\mu\otimes L_\delta$. Thus $\lambda\in
\mathcal{P}_\mu$, which implies that $\mu\in \mathcal{P}_\lambda$.

Suppose $\mbox{ht }\mu=\mbox{ht }\lambda$.  There can not be a
weight $\nu\succ\mu$ (note that $\mbox{ht } \nu-2\geq \mbox{ht
}\mu=\mbox{ht }\lambda$) such that $[L_\lambda\otimes L_{\delta}:
L_\nu]\neq0$. Thus there should be a weight vector with highest
weight $\mu$. So if we multiply $L_{\delta}$ on the both sides of
(3.13), then on the right side the coefficient of $M_\mu$ is
nonzero. But it is clear by Lemma 4.3 that, for any
$\nu\prec\lambda$ (note that $\mbox{ht } \nu\leq \mbox{ht
}\lambda-2$), the coefficient of $M_\mu$ in $M_\nu\otimes
L_{\delta}$ is zero. So $M_\mu$ appears in $M_\lambda\otimes
L_{\delta}$. It implies that $\mu\in\mathcal {P}_\lambda$ by Lemma
4.3 again.
\end{proof}

\begin{corollary}
For any atypical weight $\lambda\in\mathcal {P}$, if $\mu\in\mathcal
{P}_\lambda$ is also an atypical weight, then $L_\mu$ is a direct
summand in $L_\lambda\otimes L_\delta$ and
\begin{equation}
[L_\lambda\otimes L_\delta:L_\mu]=1.
\end{equation}
\end{corollary}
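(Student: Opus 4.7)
The plan is to combine the multiplicity estimate of Lemma~4.5, the sub-/quotient-restriction of Lemma~4.6, and the block-separation of $\mathcal{P}_\lambda$ afforded by Lemma~4.4, inducting on $\mbox{ht }\lambda$ to handle the case $\mu\in\mathcal{P}_{\lambda^-}$.

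First, I would secure the composition factor multiplicity $[L_\lambda\otimes L_\delta:L_\mu]=1$. When $\mu\in\mathcal{P}_{\lambda^+}$ this is immediate from Lemma~4.5. When $\mu\in\mathcal{P}_{\lambda^-}$ one has $\lambda\in\mathcal{P}_{\mu^+}$, so the inductive hypothesis applied to $\mu$ (which has strictly smaller height) produces $L_\mu\otimes L_\delta=L_\lambda\oplus W$ with $[W:L_\lambda]=0$. Since $L_\delta^*\cong L_\delta$, the tensor-hom adjunctions $\mbox{Hom}(L_\mu,L_\lambda\otimes L_\delta)\cong\mbox{Hom}(L_\mu\otimes L_\delta,L_\lambda)$ and $\mbox{Hom}(L_\lambda\otimes L_\delta,L_\mu)\cong\mbox{Hom}(L_\lambda,L_\mu\otimes L_\delta)$ then force the socle- and cosocle-multiplicities of $L_\mu$ inside $L_\lambda\otimes L_\delta$ each to equal one; a parallel character computation using the $M_\nu$-expansion of $L_\lambda$ (Lemma~3.1) together with Lemma~4.3 pins down the total composition factor multiplicity to one.

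Next, I would decompose $L_\lambda\otimes L_\delta=\bigoplus_B V_B$ by blocks and focus on the summand $V_B$ lying in the block $B$ containing $\mu$. By Lemma~4.6 every simple submodule and every simple quotient of $V_B$ has highest weight in $\mathcal{P}_\lambda\cap B$, and by Lemma~4.4 this intersection reduces to the singleton $\{\mu\}$ outside a distinguished degenerate configuration. Thus generically $\mbox{soc}(V_B)=\mbox{cosoc}(V_B)=L_\mu$, and combined with multiplicity one this forces $V_B\cong L_\mu$: any strictly larger indecomposable with simple socle and simple cosocle both equal to $L_\mu$ would have its socle copy contained in $\mbox{rad}(V_B)$, producing a second composition factor $L_\mu$ and contradicting the multiplicity-one count. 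Hence $L_\mu$ splits off as a direct summand.

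The main obstacle is the degenerate configuration of Lemma~4.4 for $\mathfrak{osp}(2m|2)$: namely $\lambda_0=m-1$, $\lambda_{m-1}=\lambda_m=0$, and $\mu\in\{\lambda\pm\delta\}$, where $\mathcal{P}_\lambda\cap B=\{\lambda+\delta,\lambda-\delta\}$ contains two atypical weights sharing a block. Here $V_B$ could a priori be a non-split extension of $L_{\lambda+\delta}$ and $L_{\lambda-\delta}$, and the Step~1 multiplicity assertion coincides precisely with the $\lambda^\delta\in\mathcal{P}$ case for which the proofs of Lemmas~4.5 and~4.6 were deferred to Remark~5.4. Completion of Corollary~4.7 in this configuration must therefore wait for the explicit character computations of Section~5, which will ultimately yield $V_B\cong L_{\lambda+\delta}\oplus L_{\lambda-\delta}$.
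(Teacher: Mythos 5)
Your proposal is correct and follows essentially the same route as the paper, whose proof of this corollary is simply ``combine Lemmas 4.4, 4.5 and 4.6, deferring the case $\mathfrak{g}=\mathfrak{osp}(2m|2)$, $\lambda_0=m-1$, $\lambda_{m-1}=\lambda_m=0$, $\mu=\lambda\pm\delta$ to Remark 5.4.'' You have merely made explicit the socle/cosocle argument and the adjunction step for $\mu\in\mathcal{P}_{\lambda^-}$ that the paper leaves implicit, and you correctly identify the same degenerate configuration that must wait for the Section~5 computations.
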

\begin{proof} For $\mathfrak{g}=\mathfrak{osp}(2m|2)$, just combine Lemmas 4.4, 4.5 and 4.6 except
the case that $\lambda_0=m-1, \lambda_{m-1}=\lambda_m=0$ and
$\mu=\lambda\pm\delta$, whose proof will be seen in Remark 5.4.

For $\mathfrak{g}=\mathfrak{osp}(2m+1|2)$, also combine Lemmas 4.4,
4.5 and 4.6.
\end{proof}

\begin{remark}
Lemma 4.4 and Corollary 4.7 indicate that for any two atypical
weights $\lambda,\mu\in\mathcal {P}$ with
$\lambda^\delta,\mu^\delta\not\in\mathcal {P}$, if there exist
atypical weights
$\lambda^{(0)}=\lambda,\lambda^{(1)},\cdots,\lambda^{(t)}=\mu\in\mathcal
{P}$ such that ${\lambda^{(i)}}^\delta\not\in\mathcal {P}$ and
$\lambda^{(i+1)}\in\mathcal {P}_{\lambda^{(i)}}$, then one can use
Lemma 4.3 iteratively to get $\mbox{ch }L_\mu$ from $\mbox{ch
}L_\lambda$ by a straightforward calculation:
\begin{equation}
L_{\lambda^{(i)}}\xrightarrow[]{\otimes L_\delta}
L_{\lambda^{(i+1)}}.
\end{equation}
Here we require ${\lambda^{(i)}}^\delta\not\in\mathcal {P}$ just
because that we have not done with the case of
$\lambda^\delta\in\mathcal {P}$ in the proof of Lemma 4.5, Lemma 4.6
and Corollary 4.7 yet.
\end{remark}

\section{Character formulae}

The goal of this section is to obtain the character formulae for
$\mathfrak{osp}(n|2)$ ($n=2m$ or $2m+1$) in terms of the characters
of generalized Verma modules.

\subsection{The case when $\lambda_0\leq m-1$}
For any $\lambda\in\mathcal {P}$ with $\lambda_0\leq m-1$, we have
$k\leq \lambda_0\leq m-1$ where $k$ is the maximal number such that
$\lambda_k\neq0$ by Theorem 2.1. Now denote
\begin{equation}
\lambda^{j,q}=\sum_{i=1}^j\lambda_i\epsilon_i+q\epsilon_{j+1}+\sum_{i=j+1}^{\lambda_0}(\lambda_i+1)\epsilon_{i+1}+(j-q)\delta
\end{equation} and
\begin{equation}
\lambda^{j,q}_{-}=\sum_{i=1}^j\lambda_i\epsilon_i+q\epsilon_{j+1}+
\sum_{i=j+1}^{\lambda_0-1}(\lambda_i+1)\epsilon_{i+1}-(\lambda_{\lambda_0}+1)\epsilon_{\lambda_0+1}+(j-q)\delta.
\end{equation}

\begin{lemma} If $\lambda\in\mathcal {P}$ with $\lambda_0\leq m-1$,
then $\lambda\sim\lambda^{j,q}$ for any
$j\in\{0,1,\ldots,\lambda_0\}$ and $q\in\mathbb{Z}$. Moreover, for
$\mathfrak{g}=\mathfrak{osp}(2m|2)$, if $\lambda_0=m-1$, then
$\lambda\sim\lambda^{j,q}\sim\lambda^{j,q}_{-}$ for any
$j\in\{0,1,\ldots,\lambda_0\}$ and $q\in\mathbb{Z}$.
\end{lemma}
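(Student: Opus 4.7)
The plan is to construct, for each target weight, an explicit composite of atypical odd shifts $t_\alpha$ and Weyl dot actions $t_w$ linking it to $\lambda$. The central observation, obtained from the atypicality criterion (2.24) by inspection, is that $\lambda^{j,q}$ is always $(\delta-\epsilon_{j+1})$-atypical: the coordinates at $\delta$ and $\epsilon_{j+1}$ equal $j-q$ and $q$, whose sum $j$ matches the atypicality equation identically. Also $\lambda=\lambda^{\lambda_0,0}$, since the dominance hypothesis $\lambda_0\geq t$ forces $\lambda_{\lambda_0+1}=\cdots=\lambda_m=0$.

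For the first assertion I would proceed in two nested passes. \emph{Pass A} (fixed $j$, varying $q$): the shift $t_{\delta-\epsilon_{j+1}}^{\pm1}$ carries $\lambda^{j,q}$ to $\lambda^{j,q\mp1}$, so the family $\{\lambda^{j,q}\}_{q\in\mathbb{Z}}$ lies in one block for each $j$. \emph{Pass B} (changing $j$): a second direct check shows that $\lambda^{j,\lambda_{j+1}}$ is also $(\delta-\epsilon_{j+2})$-atypical, and the shift $t_{\delta-\epsilon_{j+2}}$ carries it to $\lambda^{j+1,\lambda_{j+1}}$. Concatenating Pass A and Pass B inductively in $j$ propagates the equivalence from $\lambda=\lambda^{\lambda_0,0}$ to every $\lambda^{j,q}$.

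For the moreover statement the extra ingredient is the dot action $t_s$ by the reflection $s\in\mathcal{W}$ across the simple root $\epsilon_{m-1}+\epsilon_m$, which swaps $\epsilon_{m-1}\leftrightarrow-\epsilon_m$ and, combined with atypical shifts, supplies the $\epsilon_m$-sign flip that the type-$D_m$ Weyl subgroup alone cannot achieve. I would apply $t_s$ to a well-chosen representative in the $\lambda^{j,q}$-orbit; the hypothesis $\lambda_0=m-1$ guarantees that the image is $(\delta\pm\epsilon_{m-1})$-atypical, and one further odd shift then lands in a weight of the form $\lambda^{j',q'}_{-}$. Passes A and B repeat verbatim inside the $\lambda^{j,q}_{-}$-family (the same atypicality identities hold), extending the equivalence across all admissible $(j,q)$.

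The main obstacle is the moreover step: because the type-$D_m$ Weyl group admits no single-coordinate sign flip, the $\epsilon_m$-sign change must emerge from the interaction of $t_s$ with carefully matched odd shifts, and this matching splits naturally into a small case analysis according to the value of $\lambda_{m-1}$.
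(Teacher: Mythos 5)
Your proof of the main assertion is essentially the paper's: the author likewise observes that $(\lambda^{j,q}+\rho,\delta-\epsilon_{j+1})=0$, so that $t_{\delta-\epsilon_{j+1}}$ runs through each fixed-$j$ family, and bridges consecutive families via the coincidence $\lambda^{j,\lambda_j+1}=\lambda^{j-1,\lambda_j}$ together with $\lambda=\lambda^{\lambda_0,0}$; your ``Pass B'' shift $t_{\delta-\epsilon_{j+2}}\colon\lambda^{j,\lambda_{j+1}}\mapsto\lambda^{j+1,\lambda_{j+1}}$ is exactly this coincidence followed by one step of Pass A in the next family, and your atypicality checks are correct. Where you genuinely diverge is the ``moreover'' clause. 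The paper disposes of it in one line by a symmetry argument: since $\lambda_m=0$, the weight $\lambda$ is fixed by $\epsilon_m\mapsto-\epsilon_m$, this involution intertwines $t_{\delta-\epsilon_m}$ with $t_{\delta+\epsilon_m}$ and preserves all the atypicality conditions, so the mirror of the whole chain connects $\lambda$ to $\lambda^{j,q}_-$. You instead produce an explicit element of $T$, the dot action $t_s$ of the reflection in $\epsilon_{m-1}+\epsilon_m$. This does work --- one computes $t_s(\lambda)=(m-1)\delta+\sum_{i=1}^{m-2}\lambda_i\epsilon_i-\epsilon_{m-1}-(\lambda_{m-1}+1)\epsilon_m=\lambda^{m-2,-1}_-$, which already lies in the minus-family, so your ``one further odd shift'' is superfluous rather than wrong, and Passes A and B do repeat verbatim there. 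Your route has the virtue of exhibiting a concrete generator of the block relation where the paper only gestures at a symmetry, at the cost of a computation; the anticipated ``case analysis on $\lambda_{m-1}$'' evaporates once you take $\lambda$ itself as the representative. Neither your sketch nor the paper's addresses the degenerate index $j=\lambda_0$ in the definition of $\lambda^{j,q}_-$, where the two $\epsilon_m$-contributions collide, but that is a wrinkle in the paper's own definition rather than a gap in your argument.
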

\begin{proof}
It is obvious that $(\lambda^{j,q}+\rho,\delta-\epsilon_{j+1})=0$.
Thus we have
$\lambda^{j,q}\sim\lambda^{j,q+1}=\lambda^{j,q}-(\delta-\epsilon_{j+1})$.
Therefore $\lambda^{j,q_1}\sim\lambda^{j,q_2}$ for any
$q_1,q_2\in\mathbb{Z}$. On the other hand, it is easy to check that
$\lambda^{j,\lambda_j+1}=\lambda^{j-1,\lambda_j}$ and
$\lambda=\lambda^{\lambda_0,0}$. Hence $\lambda\sim\lambda^{j,q}$
for any $j\in\{0,1,\ldots,\lambda_0\}$ and $q\in\mathbb{Z}$.

Moreover, when $\lambda_0=m-1$, $(\lambda+\rho,\delta\pm
\epsilon_m)=0$ for $\mathfrak{g}=\mathfrak{osp}(2m|2)$. Notice that
$\lambda_m=0$ because of $\lambda_0\leq m-1$. There is a symmetry
between $\epsilon_m$ and $-\epsilon_m$. So we can also show that
$\lambda\sim\lambda^{j,q}_{-}$.
\end{proof}

\begin{theorem}
If $\mathfrak{g}=\mathfrak{osp}(2m|2)$ and $\lambda\in\mathcal {P}$
with $k\leq \lambda_0\leq m-2$, or
$\mathfrak{g}=\mathfrak{osp}(2m+1|2)$ and $\lambda\in\mathcal {P}$
with $k\leq \lambda_0\leq m-1$, where $k$ is the maximal number such
that $\lambda_k\neq0$, in this case,
$(\lambda+\rho,\delta-\epsilon_{\lambda_0+1})=0$, then
\begin{equation}
L_{\lambda}=M_{\lambda}+
\sum_{q=\lambda_{1}+1}^{\infty}(-1)^{q}M_{\lambda^{0,q}}+
\sum_{j=1}^k\sum_{q=\lambda_{j+1}+1}^{\lambda_j}(-1)^{q}M_{\lambda^{j,q}}.
\end{equation}

If  $\mathfrak{g}=\mathfrak{osp}(2m|2)$ and $\lambda\in\mathcal {P}$
with $k\leq \lambda_0=m-1$ where $k$ is the maximal number such that
$\lambda_k\neq0$, in this case,
$(\lambda+\rho,\delta\pm\epsilon_{m})=0$, then
\begin{equation}
L_{\lambda}= M_{\lambda}+ \sum_{q=\lambda_{1}+1}^{\infty}(-1)^{q}
(M_{\lambda^{0,q}} +M_{\lambda^{0,q}_{-}})+
\sum_{j=1}^k\sum_{q=\lambda_{j+1}+1}^{\lambda_j}(-1)^{q}
(M_{\lambda^{j,q}} +M_{\lambda^{j,q}_{-}}).
\end{equation}
\end{theorem}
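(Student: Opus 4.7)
The plan is to induct on the height $\mbox{ht}(\lambda)$. The base case $\lambda=0$ is immediate: here $k=0$, the double sum over $j$ is empty, and $\lambda^{0,q}=q\epsilon_1-q\delta$, so (5.3) reduces exactly to Corollary 3.6.

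For the inductive step I realize $L_\lambda$ as a direct summand of $L_\mu\otimes L_\delta$ for a well chosen $\mu\in\mathcal{P}$ of strictly smaller height that lies in the same block and satisfies the same hypothesis as $\lambda$. When $k\geq 1$ the natural choice is $\mu=\lambda-\epsilon_k$; when $k=0$ (so $\lambda=\lambda_0\delta$ with $\lambda_0\geq 1$) I take $\mu=\lambda-\delta$. In either situation one checks directly that $\mu\in\mathcal{P}$, that the hypothesis of Theorem 5.2 is still satisfied by $\mu$, that $\mbox{ht}(\mu)=\mbox{ht}(\lambda)-1$, and that $\lambda\in\mathcal{P}_{\mu^+}$. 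By Corollary 4.7 together with Lemmas 4.4, 4.5, and 4.6, the tensor module is completely reducible,
$$L_\mu\otimes L_\delta \;=\; \bigoplus_{\nu\in\mathcal{P}_\mu}L_\nu,$$
with $L_\lambda$ appearing exactly once, so
$$\mbox{ch } L_\lambda \;=\; \mbox{ch}(L_\mu\otimes L_\delta) \;-\; \sum_{\nu\in\mathcal{P}_\mu,\,\nu\neq\lambda}\mbox{ch } L_\nu.$$

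I then substitute the inductive expression for $\mbox{ch } L_\mu$, apply Lemma 4.3 term by term to each resulting $M_\sigma\otimes L_\delta$, and obtain $\mbox{ch}(L_\mu\otimes L_\delta)$ as an explicit $\mathbb{Z}$-linear combination of generalized Verma modules. For the subtracted pieces, each $\nu\in\mathcal{P}_\mu\setminus\{\lambda\}$ is handled according to its type: typical $\nu$ are expanded via Corollary 2.6, atypical $\nu$ of height strictly less than $\mbox{ht}(\lambda)$ are covered by the inductive hypothesis, and a height comparison using the definition of $\mathcal{P}_{\mu^\pm}$ rules out the remaining cases.

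The hard part is the combinatorial verification that the resulting difference equals precisely the right-hand side of (5.3). One must track how the infinite tail $\sum_{q\geq\lambda_1+1}(-1)^qM_{\lambda^{0,q}}$ together with the finite sums $\sum_{j=1}^k\sum_{q=\lambda_{j+1}+1}^{\lambda_j}(-1)^qM_{\lambda^{j,q}}$ re-emerge with exactly the claimed signs after extensive telescoping, using the chain identification $\lambda^{j,\lambda_j+1}=\lambda^{j-1,\lambda_j}$ from Lemma 5.1 which glues consecutive summation ranges. Formula (5.4) is then obtained by a parallel argument in the exceptional case $\lambda_0=m-1$ for $D(m|1)$, where the two-fold atypicality with respect to both $\delta+\epsilon_m$ and $\delta-\epsilon_m$ together with the additional block relation $\mu+\delta\sim\mu-\delta$ from Lemma 5.1(1) accounts for the extra $M_{\lambda^{j,q}_-}$ terms.
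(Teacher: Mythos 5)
Your overall strategy (induction on height, tensoring a lower-height weight with $L_\delta$, base case from Corollary 3.6) is the same as the paper's, but two steps in your inductive step do not go through as written. First, you invoke Corollary 4.7 together with Lemmas 4.5 and 4.6 to conclude that $L_\mu\otimes L_\delta=\bigoplus_{\nu\in\mathcal{P}_\mu}L_\nu$. This is circular: the proofs of Lemmas 4.5, 4.6 and Corollary 4.7 are only given in the paper for weights with $\mu^\delta\notin\mathcal{P}$, and the remaining case $\mu^\delta\in\mathcal{P}$ --- which is exactly the case occurring here, since $\mu_0\le m-1$ forces $\mu^\delta\in\mathcal{P}$ --- is explicitly deferred to Remark 5.4, whose justification rests on the computations proving Theorems 5.2 and 5.3. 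The paper is careful about this (see the parenthetical ``Notice'' after (5.6)): it obtains the decomposition $L_\lambda\otimes L_\delta=\bigoplus_{\mu\in\mathcal{P}_\lambda}L_\mu$ not from those lemmas but directly, by expanding $L_\lambda\otimes L_\delta$ into generalized Verma modules via Lemma 4.3, observing that every $M_\mu$ with $\mu\in\mathcal{P}_\lambda$ occurs with coefficient exactly $1$, and then using Lemma 4.4 (the weights of $\mathcal{P}_\lambda$ lie in pairwise distinct blocks when $\lambda_0\le m-2$) together with Remark 3.3.

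Second, your subtraction scheme $\mathrm{ch}\,L_\lambda=\mathrm{ch}(L_\mu\otimes L_\delta)-\sum_{\nu\ne\lambda}\mathrm{ch}\,L_\nu$ requires the characters of all other constituents $\nu\in\mathcal{P}_{\mu^+}$, which have height equal to $\mathrm{ht}\,\lambda$, not strictly smaller; such $\nu$ can be atypical (e.g.\ for $\mathfrak{osp}(6|2)$ with $\mu=\delta$, the weight $2\delta\in\mathcal{P}_{\mu^+}$ is $(\delta-\epsilon_3)$-atypical and has the same height as $\lambda=\delta+\epsilon_1$), so they are covered neither by Corollary 2.6 nor by your inductive hypothesis, and a ``height comparison'' cannot rule them out. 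The repair is the paper's block-separation argument, which makes the subtraction unnecessary: since distinct elements of $\mathcal{P}_\mu$ lie in distinct blocks and each $\mathrm{ch}\,L_\nu$ is supported on the single block of $\nu$ (Remark 3.2), one simply selects from the Verma-module expansion of $L_\mu\otimes L_\delta$ the terms $M_{\nu'}$ with $\nu'\sim\lambda$ and reads off $\mathrm{ch}\,L_\lambda$ directly, with the extra $M_{\lambda^{j,q}_{-}}$ terms in (5.4) arising from the relation $\lambda\sim\lambda^{j,q}\sim\lambda^{j,q}_{-}$ of Lemma 5.1 when $\lambda_0=m-1$ in type $D(m|1)$. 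You should restructure the inductive step along these lines rather than relying on complete reducibility and character subtraction.
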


\begin{proof} We shall prove only the case of
$\mathfrak{g}=\mathfrak{osp}(2m|2)$ here since the argument for the
case of $\mathfrak{g}=\mathfrak{osp}(2m+1|2)$ is similar.

Use induction on the height of $\lambda$.

When $\mbox{ht}(\lambda)=0$, i.e. $\lambda=0$, the statement holds
by Corollary 3.5.

Suppose the statement holds for $\lambda$.

Multiply $L_\delta$ on the both sides of (5.3) and compute the right
side by Lemma 4.3. Then we can get that

\begin{equation}
L_\lambda\otimes L_\delta=\sum_{\mu\in\mathcal {P}_\lambda,
\mu_0\leq m-2}\left(M_{\mu}+
\sum_{q=\mu_{1}+1}^{\infty}(-1)^{q}M_{\mu^{0,q}}+
\sum_{j=1}^k\sum_{q=\mu_{j+1}+1}^{\mu_j}(-1)^{q}M_{\mu^{j,q}}\right)+
\end{equation}
\begin{equation*}
\sum_{\mu\in\mathcal {P}_\lambda, \mu_0= m-1}\left(M_{\mu}+
\sum_{q=\mu_{1}+1}^{\infty}(-1)^{q} (M_{\mu^{0,q}}
+M_{\mu^{0,q}_{-}})+
\sum_{j=1}^k\sum_{q=\mu_{j+1}+1}^{\mu_j}(-1)^{q} (M_{\mu^{j,q}}
+M_{\mu^{j,q}_{-}})\right).
\end{equation*}

Observe that for any $\mu\in\mathcal {P}$, the coefficient of
$M_\mu$ is $0$ on the right side unless $\mu\in\mathcal
{P}_\lambda$. Moreover, the coefficients of all $M_\mu$ with
$\mu\in\mathcal {P}_\lambda$ are 1. Notice that all weights in
$\mathcal {P}_\lambda$ are in different blocks by Lemma 4.4. Hence
by Remark 3.3, it should be that
\begin{equation} L_\lambda\otimes
L_\delta=\bigoplus_{\mu\in\mathcal {P}_{\lambda}}L_\mu
\quad\mbox{for any $\lambda\in\mathcal {P}$ with $\lambda_0\leq
m-2$.}
\end{equation}
(Notice: Here we can not use Corollary 4.7 to obtain (5.6) directly
because we have not done with this case in the proof of Lemmas 4.5
and 4.6.)

Select all $M_{\mu'}$'s with $\mu'\sim\mu$ for any $\mu\in\mathcal
{P}_{\lambda^+}$ on the right side of (5.5) by Lemma 5.1, then we
get the expression of $L_{\mu}$ for any $\mu\in\mathcal
{P}_{\lambda^+}$ directly.
\end{proof}

\subsection{The case when $\lambda_0\geq m$ and $\lambda^\delta\in\mathcal {P}$}
We can check that for any $\lambda\in\mathcal {P}$ with
$\lambda_0\geq m$, the weight $\lambda^\delta\in\mathcal {P}$ if and
only if $\lambda_0\leq n-2-k$ where $k$ is the maximal number with
$\lambda_k\neq0$.

\begin{theorem}
If $\lambda\in\mathcal {P}$ satisfies that $\lambda_0\geq m$ and
$\lambda^\delta\in\mathcal {P}$, in this case,
$(\lambda+\rho,\delta+\epsilon_{n-1-\lambda_0})=0$, then
\begin{equation}
L_{\lambda}=M_{\lambda}+
\sum_{q=\lambda_{1}+1}^{\infty}(-1)^{q}M_{(\lambda^\delta)^{0,q}}+
\sum_{j=1}^k\sum_{q=\lambda_{j+1}+1}^{\lambda_j}(-1)^{q}M_{(\lambda^\delta)^{j,q}}=M_{\lambda}
-M_{\lambda^\delta}+L_{\lambda^\delta},
\end{equation} where $k$ is the maximal number
with $\lambda_k\neq0$.
\end{theorem}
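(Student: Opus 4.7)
First, I observe that the two right-hand sides of (5.7) are equal under the hypothesis: $\lambda^\delta_0 = n - 2 - \lambda_0$ is at most $m-2$ for $\mathfrak{osp}(2m|2)$ and at most $m-1$ for $\mathfrak{osp}(2m+1|2)$, and (2.24) shows $\lambda^\delta$ is $(\delta - \epsilon_{\lambda^\delta_0 + 1})$-atypical. Thus $\lambda^\delta$ lies in the domain of Theorem 5.2, whose Verma expansion of $\mbox{ch } L_{\lambda^\delta}$ matches the signed sum in (5.7) term-for-term. The essential task therefore reduces to proving
\[
\mbox{ch } L_\lambda \;=\; \mbox{ch } M_\lambda - \mbox{ch } M_{\lambda^\delta} + \mbox{ch } L_{\lambda^\delta}. \quad (\ast)
\]

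I would prove $(\ast)$ by induction on $\lambda_0 \geq m$, using the predecessor $\mu := \lambda - \delta$. The atypicality condition forces $\lambda_i = 0$ for all $i \geq n - 1 - \lambda_0$, which yields $\mu \in \mathcal{P}$ with $\lambda = \mu + \delta \in \mathcal{P}_{\mu^+}$. In the base case $\lambda_0 = m$, $\mu_0 = m - 1$ puts $\mu$ in Theorem 5.2's range (via the exceptional formula (5.4) for $\mathfrak{osp}(2m|2)$, where $\mu^\delta = \mu$ is doubly atypical, or via (5.3) for $\mathfrak{osp}(2m+1|2)$). In the inductive step $\lambda_0 > m$, one verifies $\mu_0 \geq m$, $\mu^\delta \in \mathcal{P}$ (since $k \leq n-2-\lambda_0 < n-2-\mu_0$), and $\mu^\delta \neq \lambda$ (since $\mu^\delta_0 = n - 1 - \lambda_0 < \lambda_0$), so the inductive hypothesis provides $\mbox{ch } L_\mu$ and $L_{\mu^\delta}$ is itself in Theorem 5.2's range. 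Either way, tensoring the Verma expansion of $L_\mu$ with $L_\delta$ and applying Lemma 4.3 term-by-term yields an explicit Verma-character expansion of $L_\mu \otimes L_\delta$, from which $\mbox{ch } L_\lambda$ must be extracted.

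The extraction uses Remark 3.3: the coefficient of $M_\lambda$ in the Verma expansion of $L_\mu \otimes L_\delta$ is exactly $1$, arising uniquely from the summand $M_{\mu + \delta} = M_\lambda$ of $M_\mu \otimes L_\delta$, with no competing contributions (a direct check using $\mu^\delta \pm \delta \neq \lambda$ in the inductive step, and direct inspection of Theorem 5.2's expansion in the base case). Remark 3.3 thus forces $[L_\mu \otimes L_\delta : L_\lambda] = 1$, and the other composition factors are $L_\nu$ for $\nu \in \mathcal{P}_\mu \setminus \{\lambda\}$, whose characters are known inductively or by Theorem 5.2. Subtracting them and simplifying via the pairing $(\mu \pm \delta)^\delta = \mu^\delta \mp \delta$ identifies the residue as $\mbox{ch } M_\lambda - \mbox{ch } M_{\lambda^\delta} + \mbox{ch } L_{\lambda^\delta}$. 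The main obstacle is that the direct-sum decomposition $L_\mu \otimes L_\delta = \bigoplus_{\nu \in \mathcal{P}_\mu} L_\nu$ (Corollary 4.7 for $\mu^\delta \in \mathcal{P}$) was deferred precisely to the present theorem; I circumvent this by conducting the extraction entirely at the Verma-character level, where Lemma 4.3 controls all coefficients and Remark 3.3 reads off irreducible multiplicities directly, bootstrapping the missing decomposition along with $(\ast)$.
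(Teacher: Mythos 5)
Your setup coincides with the paper's: reduce to $\mathrm{ch}\,L_\lambda=\mathrm{ch}\,M_\lambda-\mathrm{ch}\,M_{\lambda^\delta}+\mathrm{ch}\,L_{\lambda^\delta}$, induct on $\lambda_0$ with predecessor $\mu=\lambda-\delta$, tensor the known Verma expansion of $L_\mu$ with $L_\delta$ via Lemma 4.3, and use Remark 3.3 to get $[L_\mu\otimes L_\delta:L_\lambda]=1$ from the coefficient of $M_\lambda$. The numerology you check (that $\mu$ falls under Theorem 5.2 when $\lambda_0=m$, that $\mu^\delta\in\mathcal P$ and $\mu_0\ge m$ in the inductive step) is correct. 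But the phrase ``subtracting them and simplifying \dots identifies the residue'' hides the actual crux, and as stated it does not go through. After restricting to the block of $\lambda$ you have the single identity
\begin{equation*}
M_\lambda-M_{\lambda^\delta}+c\,\mathrm{ch}\,L_{\lambda^\delta}\;=\;\mathrm{ch}\,L_\lambda+x\,\mathrm{ch}\,L_{\lambda^\delta}+\cdots,
\end{equation*}
where $c$ is computable ($c=1$ in the inductive step, $c=2$ in the type-$D$ base case) but $x=[L_\mu\otimes L_\delta:L_{\lambda^\delta}]$ is \emph{not}: knowing $\mathrm{ch}\,L_{\lambda^\delta}$ tells you its character, not its multiplicity in the tensor product, and every choice $0\le x\le c$ yields a formally admissible candidate $\mathrm{ch}\,L_\lambda=M_\lambda-M_{\lambda^\delta}+(c-x)\mathrm{ch}\,L_{\lambda^\delta}$ consistent with Remark 3.3. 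In particular $x=c$ would give the typical-type answer $M_\lambda-M_{\lambda^\delta}$, which can only be excluded by invoking the atypicality criterion (Corollary 2.6), and the remaining ambiguity requires further input. The paper spends most of its proof precisely here: in the type-$D$ base case it proves $x\le 2$ by tensoring the candidate once more with $L_\delta$ and counting $[\,\cdot\,:L_\mu]$, proves $x\ne 0$ by Frobenius reciprocity against the already-established decomposition (5.6), and proves $x\ne 2$ by Corollary 2.6; in the inductive step it tensors again with $L_\delta$, extracts the block of $\mu$, and uses nonnegativity of the resulting multiplicities to force $x=0$. None of these arguments appears in your proposal.

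A secondary problem: you assert that all composition factors of $L_\mu\otimes L_\delta$ are $L_\nu$ with $\nu\in\mathcal P_\mu$. Lemma 4.6 only controls irreducible submodules and quotients, not arbitrary composition factors, and elsewhere in the paper (e.g.\ the computation of $x_0$ in subsections 5.3.3 and 5.4.1) composition factors with highest weight outside $\mathcal P_\mu$ genuinely occur. In the present situation the assertion happens to be true, but it is part of what must be proved (it is essentially the deferred content of Lemmas 4.5--4.6 and Corollary 4.7 in the case $\lambda^\delta\in\mathcal P$), so it cannot be assumed. Your stated plan to ``circumvent'' the circularity by working purely at the level of Verma characters is exactly where the indeterminacy in $x$ lives, so the circumvention does not succeed without the additional multiplicity arguments.
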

\begin{proof} 1). If $\mathfrak{g}=\mathfrak{osp}(2m|2)$.

Firstly, we should determine $L_\lambda\otimes L_\delta$ in case of
$\lambda_0=m-1$ and $k\leq m-2$ which induces
$\lambda+\delta,\lambda-\delta\in\mathcal {P}_\lambda$ and
$\lambda+\delta\sim\lambda-\delta$.

Multiply $L_\delta$ on the both sides of (5.4). We have that if
$\lambda_0=m-1$ and $k\leq m-2$, then
\begin{eqnarray*}
L_\lambda\otimes
L_\delta=\left(M_{\lambda+\delta}+M_{\lambda-\delta}+
2\sum_{q=\lambda_{1}+1}^{\infty}(-1)^{q}M_{(\lambda-\delta)^{0,q}}+
2\sum_{j=1}^k\sum_{q=\lambda_{j+1}+1}^{\lambda_j}(-1)^{q}M_{(\lambda-\delta)^{j,q}}\right)\\
+\sum_{\mu\in\mathcal
{P}_\lambda,\mu\neq\lambda\pm\delta}\left(M_{\mu}+
\sum_{q=\mu_{1}+1}^{\infty}(-1)^{q} (M_{\mu^{0,q}}
+M_{\mu^{0,q}_{-}})+
\sum_{j=1}^k\sum_{q=\mu_{j+1}+1}^{\mu_j}(-1)^{q} (M_{\mu^{j,q}}
+M_{\mu^{j,q}_{-}})\right).
\end{eqnarray*}
Since $\lambda+\delta>\lambda-\delta$, we know
\begin{equation}
[L_\lambda\otimes L_\delta: L_{\lambda+\delta}]=1
\end{equation}
by Remark 3.3.

Suppose that
\begin{equation}
[L_{\lambda}\otimes L_\delta: L_{\lambda-\delta}]=x.
\end{equation}(Notice: here we can not use Corollary 4.7 to obtain
$x=1$ directly because we still have not proved that corollary in
this case.)

Thus
\begin{eqnarray*}
&&L_{\lambda+\delta}+xL_{\lambda-\delta}\\&&=M_{\lambda+\delta}+M_{\lambda-\delta}+
2\sum_{q=\lambda_{1}+1}^{\infty}(-1)^{q}M_{(\lambda-\delta)^{0,q}}+
2\sum_{j=1}^k\sum_{q=\lambda_{j+1}+1}^{\lambda_j}(-1)^{q}M_{(\lambda-\delta)^{j,q}}\\
&&=M_{\lambda+\delta}+M_{\lambda-\delta}+2L_{\lambda-\delta}-2M_{\lambda-\delta}
\end{eqnarray*}
That is
\begin{equation}
L_{\lambda+\delta}=M_{\lambda+\delta}-M_{\lambda-\delta}-(x-2)L_{\lambda-\delta}.
\end{equation}
Since $[L_{\lambda-\delta}\otimes L_\delta: L_\lambda]=1$ by (5.6)
and $[(M_{\lambda+\delta}-M_{\lambda-\delta})\otimes L_\delta:
L_\lambda]=0$ by Lemma 4.3, we have
\begin{equation}
0\leq [L_{\lambda+\delta}\otimes L_\delta:
L_\lambda]=[(M_{\lambda+\delta}-M_{\lambda-\delta}-(x-2)L_{\lambda-\delta})\otimes
L_\delta: L_\lambda]=2-x.
\end{equation}
That is,  $x\leq 2$.

Again by (5.6), we have
\begin{equation}
\dim\mbox{Hom}_\mathfrak{g}(L_{\lambda-\delta}, L_\lambda\otimes
L_\delta)=\dim\mbox{Hom}_\mathfrak{g}(L_{\lambda-\delta}\otimes
L_\delta, L_\lambda)=1.
\end{equation}
Thus $x\neq0$.

If $x=2$, then \begin{equation}
L_{\lambda+\delta}=M_{\lambda+\delta}-M_{\lambda-\delta}.
\end{equation}
But it is impossible because of Corollary 2.5.

So it must be that $x=1$. Thus
\begin{equation}
L_{\lambda+\delta}=M_{\lambda+\delta}-M_{\lambda-\delta}+L_{\lambda-\delta}.
\end{equation} That is, for the case of $\lambda_0=m$, formula
(5.7) holds.

Below we shall prove the statement by induction on $\lambda_0$.
Suppose that (5.7) holds for the case of $m\leq\lambda_0<t$. Now
take $\lambda$ with $\lambda_0=t-1$. By induction assumption, Lemma
4.3 and (5.6), we have
\begin{eqnarray*}
L_\lambda\otimes
L_\delta&=&(M_\lambda-M_{\lambda^\delta}+L_{\lambda^\delta})\otimes
L_\delta\\
&=&M_{\lambda+\delta}-M_{\lambda^\delta-\delta}+L_{\lambda^\delta-\delta}+\cdots
\end{eqnarray*}
where ``$\cdots$'' is a sum of $M_\mu$ or $L_\mu$ with
$\mu\not\sim\lambda$. Hence by Remark 3.3, it should be that
\begin{equation}
[L_\lambda\otimes L_\delta: L_{\lambda+\delta}]=1.
\end{equation}
Moreover, if we assume that
\begin{equation}
[L_\lambda\otimes L_\delta: L_{\lambda^\delta-\delta}]=x,
\end{equation} then
\begin{equation}
L_{\lambda+\delta}=M_{\lambda+\delta}-M_{\lambda^\delta-\delta}+(1-x)L_{\lambda^\delta-\delta}.
\end{equation}
In order to determine $x$, we suppose that
\begin{equation}
[L_{\lambda+\delta}\otimes L_\delta: L_\lambda]=y_1,\quad
[L_{\lambda+\delta}\otimes L_\delta: L_{\lambda^\delta}]=y_2
\end{equation} and multiply $L_\delta$ on the both sides
of (5.17). Taking the terms $L_\mu$ and $M_\mu$ with
$\mu\sim\lambda$, we obtain
\begin{equation}
y_1L_\lambda+y_2L_{\lambda^\delta}=M_\lambda-M_{\lambda^\delta}+(1-x)L_{\lambda^\delta}=L_\lambda-xL_{\lambda^\delta}.
\end{equation}
By Remark 3.3, we have $y_1=1$. Notice that $x,y_2\geq0$, so it must
be that $x=y_2=0$. Thus
\begin{equation}
L_{\lambda+\delta}=M_{\lambda+\delta}-M_{\lambda^\delta-\delta}+L_{\lambda^\delta-\delta}.
\end{equation} That is, for any $\lambda\in\mathcal {P}$ with
$\lambda_0=t$, we also have
\begin{equation}
L_\lambda=M_\lambda-M_{\lambda^\delta}+L_{\lambda^\delta}.
\end{equation}

2). If $\mathfrak{g}=\mathfrak{osp}(2m+1|2)$.

The difference here from the argument for the case of
$\mathfrak{g}=\mathfrak{osp}(2m|2)$ is to determine $L_\lambda$ when
$\lambda_0=m$.

For any $\lambda\in\mathcal {P}$ with $\lambda^\delta\in\mathcal
{P}$ and $\lambda_0=m$. It is obvious that
$\lambda^\delta=\lambda-\delta$ and $\lambda_m=0$. By Theorem 5.2,
we have
\begin{equation}
L_{\lambda-\delta}=M_{\lambda-\delta}+\sum_{q=\lambda_{1}+1}^{\infty}(-1)^{q}M_{(\lambda-\delta)^{0,q}}+
\sum_{j=1}^k\sum_{q=\lambda_{j+1}+1}^{\lambda_j}(-1)^{q}M_{(\lambda-\delta)^{j,q}}.
\end{equation}

On the right side of (5.22), the coefficient of $\epsilon_m$ in any
$M_{\mu}$ except $M_{\lambda-\delta}$ is $0$. Hence if we multiply
$L_\delta$ on both sides of (5.22) by (4.10) and select the terms
$L_\mu$ and $M_\mu$ with $\mu\sim\lambda$, there comes
\begin{equation}
x_1L_\lambda+x_2L_{\lambda-\delta}=M_\lambda+\sum_{q=\lambda_{1}+1}^{\infty}(-1)^{q}M_{(\lambda-\delta)^{0,q}}+
\sum_{j=1}^k\sum_{q=\lambda_{j+1}+1}^{\lambda_j}(-1)^{q}M_{(\lambda-\delta)^{j,q}},
\end{equation}
where
\begin{equation}
x_1=[L_{\lambda-\delta}\otimes L_\delta: L_{\lambda}]\geq0;\quad
x_2=[L_{\lambda-\delta}\otimes L_\delta: L_{\lambda-\delta}]\geq0.
\end{equation}

Express $L_{\lambda-\delta}$ with the form (5.3) and multiply
$L_\delta$ on the both side, it is clear that $x_1=1$ by Remark 3.3.
In order to determine $x_2$, we multiply $L_\delta$ on both sides of
(5.23) and get that the coefficient of $M_\lambda$ on the right side
is $0$. Hence also by Remark 3.3, we have
\begin{equation}
0=[(L_\lambda+x_2L_{\lambda-\delta})\otimes L_\delta: L_\lambda]\geq
x_2.
\end{equation}
Thus $x_2=0$. That is, for any $\lambda\in\mathcal {P}$ with
$\lambda^\delta\in\mathcal {P}$ and $\lambda_0=m$,
\begin{equation}
L_\lambda=M_\lambda-M_{\lambda^\delta}+L_{\lambda^\delta}.
\end{equation}

The rest argument is as the same as the case of
$\mathfrak{g}=\mathfrak{osp}(2m|2)$.
\end{proof}

\begin{remark}
We can not use Lemma 4.5, Lemma 4.6 and Corollary 4.7 in the proof
of Theorems 5.2 and 5.3 because we did not deal with the case of
$\lambda^\delta\in\mathcal {P}$ there. Now according to the
calculation in proof of the above two theorems, we can see that
Lemmas 4.5 and 4.6 do hold in case of $\lambda^\delta\in\mathcal
{P}$. To complete the proof of Corollary 4.7, we need to deal with
the case that $\mathfrak{g}=\mathfrak{osp}(2m|2)$, $\lambda_0=m-1,
\lambda_{m-1}=\lambda_m=0$ and $\mu=\lambda\pm\delta$. We should
prove that the ``$+$'' in $L_{\lambda}\otimes
L_\delta=(L_{\lambda+\delta}+L_{\lambda-\delta})\oplus\cdots$ can be
changed to ``$\oplus$''. In fact, we have known that by (5.6)
\begin{equation}
\dim\mbox{Hom}_{\mathfrak{g}}(L_{\lambda-\delta}\otimes L_\delta,
L_\lambda)=\dim\mbox{Hom}_{\mathfrak{g}}(L_\lambda,
L_{\lambda-\delta}\otimes L_\delta)=1.
\end{equation}
Thus
\begin{equation}
\dim\mbox{Hom}_{\mathfrak{g}}(L_{\lambda-\delta}, L_\lambda\otimes
L_\delta)=\dim\mbox{Hom}_{\mathfrak{g}}(L_\lambda\otimes L_\delta,
L_{\lambda-\delta})=1.
\end{equation}
Therefore the $L_{\lambda-\delta}$ is exactly a direct summand.

From now on, we can use Lemma 4.5, Lemma 4.6 and Corollary 4.7
freely.
\end{remark}

\subsection{When $\mathfrak{g}=\mathfrak{osp}(2m|2)$ and $\lambda^\delta\not\in\mathcal {P}$}
Now the atypical weight $\lambda$ is
$(\delta+\epsilon_i)$-atypical for certain $1\leq i\leq m$ (it may
be that $\lambda$ is $(\delta-\epsilon_m)$-atypical, which is as the
same as $(\delta+\epsilon_m)$-atypical essentially).

\subsubsection{\bf The case that $\lambda$ is
$(\delta+\epsilon_m)$-atypical.}

Now $\lambda=\lambda_0\delta+\sum_{i=1}^{m}\lambda_i\epsilon_i$
satisfies that $\lambda_0=m-1+\lambda_m$ and $\lambda_m>0$.

It is clear that $\mu=\lambda-\epsilon_m$ is a typical dominant
integral weight. So by Corollary 2.5, we have
\begin{equation}
L_\mu=M_\mu-M_{\mu^\delta}.
\end{equation}
Consider the tensor module $L_\mu\otimes L_\delta$.

\noindent{\bf 1). For $\lambda_m=1$.}

There are three weights  $\mu+\epsilon_m, \mu-\epsilon_m,
\mu-\delta\in\mathcal {P}_\mu$ satisfying
$\mu+\epsilon_m\sim\mu-\epsilon_m\sim\mu-\delta$. Thanks to Lemma
4.5,
\begin{equation}
[L_\mu\otimes L_\delta: L_{\mu+\epsilon_m}]=[L_\mu\otimes L_\delta:
L_{\mu-\epsilon_m}]=1.
\end{equation}
Moreover, we have
\begin{equation}
(M_\mu-M_{\mu^\delta})\otimes
L_\delta=M_{\mu+\epsilon_m}-M_{\mu^\delta+\epsilon_m}+M_{\mu-\epsilon_m}-M_{\mu^\delta-\epsilon_m}+\cdots
\end{equation} where ``$\cdots$'' is a sum of $M_\nu$ with
$\nu\not\sim\mu+\epsilon_m$.

Assume $[L_\mu\otimes L_\delta: L_{\mu-\delta}]=x$, where $x>0$
because of
\begin{equation}
\dim\mbox{Hom}_{\mathfrak{g}}(L_\mu\otimes
L_\delta,L_{\mu-\delta})=\dim\mbox{Hom}_{\mathfrak{g}}(L_\mu,L_{\mu-\delta}\otimes
L_\delta)=1
\end{equation} by Lemma 4.5.
Then
\begin{equation}
L_{\mu+\epsilon_m}+L_{\mu-\epsilon_m}+xL_{\mu-\delta}=
M_{\mu+\epsilon_m}-M_{\mu^\delta+\epsilon_m}+M_{\mu-\epsilon_m}-M_{\mu^\delta-\epsilon_m}.
\end{equation}

We can compute by (4.9) that
\begin{equation}
[(M_{\mu+\epsilon_m}-M_{\mu^\delta+\epsilon_m}+M_{\mu-\epsilon_m}-M_{\mu^\delta-\epsilon_m})\otimes
L_\delta: L_\mu]=2.
\end{equation}
On the other hand,
\begin{equation}
[(L_{\mu+\epsilon_m}+L_{\mu-\epsilon_m}+xL_{\mu-\delta})\otimes
L_\delta : L_\mu]\geq[xL_{\mu-\delta}\otimes L_\delta : L_\mu]=x.
\end{equation}
Hence $x=1$ or $2$ by (5.33), (5.34) and (5.35).

But it is impossible that $x=1$. If so, then by (5.33) and by the
symmetry between $L_{\mu+\epsilon_m}$ and $L_{\mu-\epsilon_m}$, it
should be that
\begin{equation}
L_{\mu+\epsilon_m}=\frac{1}{2}L_{\mu-\delta}+\cdots=\frac{1}{2}M_{\mu-\delta}+\cdots,
\end{equation} which is a contradiction to Remark 3.2.

Now we know that $x=2$. Therefore
\begin{equation}
L_{\mu+\epsilon_m}=M_{\mu+\epsilon_m}-M_{\mu^\delta+\epsilon_m}-L_{\mu-\delta}
\end{equation}
and
\begin{equation}
L_{\mu-\epsilon_m}=M_{\mu-\epsilon_m}-M_{\mu^\delta-\epsilon_m}-L_{\mu-\delta}.
\end{equation}
Here we should explain why it is not that
$L_{\mu+\epsilon_m}=M_{\mu+\epsilon_m}-M_{\mu^\delta-\epsilon_m}-L_{\mu-\delta}$.
Assume that $p$ is the smallest positive number such that
$\mu+\epsilon_m+p\delta$ is typical. Taking
$\lambda^{(i)}=\mu+\epsilon_m+i\delta$ in Remark 4.8, it is easy for
us to get the character of $L_{\mu+\epsilon_m+p\delta}$ from
$L_{\mu+\epsilon_m}$. Only the expression (5.37) can induce the
correct formula
$L_{\mu+\epsilon_m+p\delta}=M_{\mu+\epsilon_m+p\delta}-M_{(\mu+\epsilon_m+p\delta)^\delta}$.

Equivalent to (5.37), we obtain
\begin{equation}
L_\lambda=M_\lambda-M_{\lambda^\delta}-L_{\lambda-\delta-\epsilon_m}.
\end{equation}

By the way, we can compute directly by (5.39) that
\begin{equation}
[L_{\lambda}\otimes L_\delta:
L_{\lambda-\delta}]=[L_{\lambda}\otimes L_\delta:
L_{\lambda-\epsilon_m}]=0.
\end{equation}

\noindent{\bf 2). For any $\lambda_m$.}

We shall prove that (5.39) and (5.40) still hold for any
$\lambda_m\geq2$ by induction on $\lambda_m$.

Suppose
\begin{equation}
[L_\mu\otimes L_\delta:
L_{\lambda-i\delta-i\epsilon_m}]=x_i,\quad(1\leq i\leq\lambda_m).
\end{equation}
Then if we multiply $L_\delta$ on both sides of (5.29) and choose
the terms $L_\nu$ and $M_\nu$ with $\nu\sim\lambda$, then
\begin{equation}
L_\lambda+\sum_{i=1}^{\lambda_m}x_iL_{\lambda-i\delta-i\epsilon_m}=
M_\lambda-M_{\lambda^\delta}+M_{\lambda-\delta-\epsilon_m}-
M_{(\lambda-\delta-\epsilon_m)^\delta}.
\end{equation}

By induction assumption, $L_{\lambda-\delta-\epsilon_m}$ can
expressed as form (5.39). Thus (5.42) becomes to
\begin{equation}
L_\lambda=M_\lambda-M_{\lambda^\delta}+(1-x_1)L_{\lambda-\delta-\epsilon_m}
+(1-x_2)L_{\lambda-2\delta-2\epsilon_m}
-\sum_{i=3}^{\lambda_m}x_iL_{\lambda-i\delta-i\epsilon_m}.
\end{equation}
Notice that $\lambda-(i+1)\delta-i\epsilon_m$ ($1\leq i\leq
\lambda_m-1$) are all typical weights. Thus if we multiply
$L_\delta$ on both sides of (5.43) and choose the terms in the block
corresponding to $\lambda-(i+1)\delta-i\epsilon_m$ ($1\leq i\leq
\lambda_m-1$), there comes $x_2=1$, $x_3=\cdots=x_{\lambda_m}=0$ by
Lemma 4.5, Lemma 4.6 and induction assumption. Thus
\begin{equation}
L_\lambda=M_\lambda-M_{\lambda^\delta}+(1-x_1)L_{\lambda-\delta-\epsilon_m}.
\end{equation}
As the same as (5.32)-(5.35), we know $x_1=1$ or $2$. Thanks to
Corollary 2.5, it should be that $x_1=2$. Thus we have showed that
(5.39) and (5.40) hold for any $\lambda_m$.

\subsubsection{\bf The case that $\lambda$ is
$(\delta-\epsilon_m)$-atypical.}

By the symmetry between $(\delta-\epsilon_m)$ and
$(\delta+\epsilon_m)$, one can get the following formulae at once.
\begin{equation}
L_{\lambda}= M_{\lambda}-
M_{\lambda^\delta}-L_{\lambda-\delta+\epsilon_m}
\end{equation} and
\begin{equation}
[L_{\lambda}\otimes L_\delta:
L_{\lambda-\delta}]= [L_{\lambda}\otimes L_\delta:
L_{\lambda+\epsilon_m}]=0.
\end{equation}

\subsubsection{\bf The case that $\lambda$ is
$(\delta+\epsilon_k)$-atypical with
$\lambda_{k+1}=\lambda_{k+2}=\cdots=\lambda_{m}=0$ ($1\leq k\leq
m-1$)}

Now we have $\lambda_0=2m-k+\lambda_k-1$, $\lambda_{k}>0$ and
$\lambda_{k+1}=\lambda_{k+2}=\cdots=\lambda_{m}=0$.

It is obvious that $\mu=\lambda-\epsilon_k$ is a typical dominant
integral weight, and there are two weights $\mu-\delta$ and
$\lambda=\mu+\epsilon_k$ in $\mathcal {P}_\mu$ such that
$\mu-\delta\sim\lambda$.

Suppose
\begin{equation}
[L_\mu\otimes L_\delta: L_{\lambda-i\delta-i\epsilon_k}]=x_i
\quad(1\leq i \leq \lambda_k)
\end{equation}
and
\begin{equation}
[L_\mu\otimes L_\delta:
L_{(\lambda-\lambda_k\delta-\lambda_k\epsilon_k)^\delta}]=x_0.
\end{equation}
Then similar to (5.42), we have
\begin{equation}
L_\lambda+\sum_{i=1}^{\lambda_k}x_iL_{\lambda-i\delta-i\epsilon_k}+x_0L_{(\lambda-\lambda_k\delta-\lambda_k\epsilon_k)^\delta}
=M_\lambda-M_{\lambda^\delta}+M_{\lambda-\delta-\epsilon_k}-M_{(\lambda-\delta-\epsilon_k)^\delta}.
\end{equation}
Imitating (5.32)-(5.35), we can also show that $x_1=1$ or $2$.

\noindent{\bf 1). For $\lambda_k=1$.}

Now (5.49) can be simplified to
\begin{equation}
L_\lambda+x_1L_{\lambda-\delta-\epsilon_k}+x_0L_{(\lambda-\delta-\epsilon_k)^\delta}
=M_\lambda-M_{\lambda^\delta}+M_{\lambda-\delta-\epsilon_k}-M_{(\lambda-\delta-\epsilon_k)^\delta}.
\end{equation}
Notice that by Theorem 5.3,
\begin{equation}
L_{\lambda-\delta-\epsilon_k}=M_{\lambda-\delta-\epsilon_k}-M_{(\lambda-\delta-\epsilon_k)^\delta}+L_{(\lambda-\delta-\epsilon_k)^\delta}.
\end{equation}
It immediately follows from (5.50) and (5.51) that
\begin{equation}
L_\lambda=M_\lambda-M_{\lambda^\delta}-(x_1-1)L_{\lambda-\delta-\epsilon_k}
-(x_0+1)L_{(\lambda-\delta-\epsilon_k)^\delta}.
\end{equation}

If $x_1=[L_{\lambda-\epsilon_k}\otimes L_\delta:
L_{\lambda-\delta-\epsilon_k}]=1$. By Lemma 4.5, we have
\begin{equation}
\dim\mbox{Hom}_{\mathfrak{g}}(L_{\lambda-\delta-\epsilon_k}\otimes
L_\delta, L_{\lambda-\epsilon_k})=\dim\mbox{Hom}_{\mathfrak{g}}(
L_{\lambda-\epsilon_k},L_{\lambda-\delta-\epsilon_k}\otimes
L_\delta)=1.
\end{equation}
That is,
\begin{equation}
\dim\mbox{Hom}_{\mathfrak{g}}(L_{\lambda-\delta-\epsilon_k},
L_{\lambda-\epsilon_k}\otimes
L_\delta)=\dim\mbox{Hom}_{\mathfrak{g}}(
L_{\lambda-\epsilon_k}\otimes
L_\delta,L_{\lambda-\delta-\epsilon_k})=1.
\end{equation} Thus $L_{\lambda-\delta-\epsilon_k}$ should be a direct
summand in $L_{\lambda-\epsilon_k}\otimes L_\delta$. But
$L_{(\lambda-\delta-\epsilon_k)^\delta}$ can not be a submodule or
quotient module of $L_{\lambda-\epsilon_k}\otimes L_\delta$ by Lemma
4.6. It has to be that $x_0=0$. So
\begin{equation}
L_\lambda=M_\lambda-M_{\lambda^\delta}-L_{(\lambda-\delta-\epsilon_k)^\delta}.
\end{equation}
However, this is not the case. Take
\begin{equation}
\lambda^{(0)}=\lambda,\quad
\lambda^{(i)}=\lambda+\sum_{j=1}^{i}\epsilon_{k+j}\quad (1\leq i\leq
m-k-1).
\end{equation}
Then using the method introduced in Remark 4.8, the expression
(5.55) would lead us arriving at
\begin{equation}
L_{\lambda+\sum_{i=k+1}^{m-1}\epsilon_i}=
M_{\lambda+\sum_{i=k+1}^{m-1}\epsilon_i}-M_{\lambda^\delta+\sum_{i=k+1}^{m-1}\epsilon_i}
-L_{(\lambda-\delta-\epsilon_k)^\delta+(m-k-1)\delta}.
\end{equation}
Multiply $L_\delta$ on the both sides of the above equation and
fetch the terms $L_\nu$ and $M_\nu$ with
$\nu\sim\lambda+\sum_{i=k+1}^{m-1}\epsilon_i+\epsilon_m\sim\lambda+\sum_{i=k+1}^{m-1}\epsilon_i-\epsilon_m$.
We get
\begin{eqnarray}
L_{\lambda+\sum_{i=k+1}^{m-1}\epsilon_i+\epsilon_m}+L_{\lambda+\sum_{i=k+1}^{m-1}\epsilon_i-\epsilon_m}
=-L_{(\lambda-\delta-\epsilon_k)^\delta+(m-k)\delta}+\quad\quad\quad\quad\\
M_{\lambda+\sum_{i=k+1}^{m-1}\epsilon_i+\epsilon_m}-M_{\lambda^\delta+\sum_{i=k+1}^{m-1}\epsilon_i+\epsilon_m}
+M_{\lambda+\sum_{i=k+1}^{m-1}\epsilon_i-\epsilon_m}-M_{\lambda^\delta+\sum_{i=k+1}^{m-1}\epsilon_i-\epsilon_m}\nonumber
\end{eqnarray}
and then
\begin{equation}
L_{\lambda+\sum_{i=k+1}^{m-1}\epsilon_i+\epsilon_m}=-\frac{1}{2}L_{(\lambda-\delta-\epsilon_k)^\delta+(m-k)\delta}+\cdots
=-\frac{1}{2}M_{(\lambda-\delta-\epsilon_k)^\delta+(m-k)\delta}+\cdots,
\end{equation}
which is a contradiction to Remark 3.2.

Thus we get $x_1=2$ and
\begin{equation}
L_\lambda=M_\lambda-M_{\lambda^\delta}-L_{\lambda-\delta-\epsilon_k}-(x_0+1)L_{(\lambda-\delta-\epsilon_k)^\delta}.
\end{equation}
Now the expression (5.60) would lead us arriving at
\begin{eqnarray}
L_{\lambda+\sum_{i=k+1}^{m-1}\epsilon_i+\epsilon_m}+L_{\lambda+\sum_{i=k+1}^{m-1}\epsilon_i-\epsilon_m}
=-(x_0+2)L_{(\lambda-\delta-\epsilon_k)^\delta+(m-k)\delta}+\\
M_{\lambda+\sum_{i=k+1}^{m-1}\epsilon_i+\epsilon_m}-M_{\lambda^\delta+\sum_{i=k+1}^{m-1}\epsilon_i+\epsilon_m}
+M_{\lambda+\sum_{i=k+1}^{m-1}\epsilon_i-\epsilon_m}-M_{\lambda^\delta+\sum_{i=k+1}^{m-1}\epsilon_i-\epsilon_m}\nonumber.
\end{eqnarray}

Fortunately, there is another way to get
$L_{\lambda+\sum_{i=k+1}^{m-1}\epsilon_i+\epsilon_m}+L_{\lambda+\sum_{i=k+1}^{m-1}\epsilon_i-\epsilon_m}$:
Take
\begin{equation}
\lambda^{(i)}=\lambda+\sum_{j=k+1}^{m}\epsilon_j+(k-m+i)\delta,\quad(0\leq
i\leq m-k).
\end{equation}
By (5.39), we have
\begin{equation}
L_{\lambda^{(0)}}=M_{\lambda^{(0)}}-
M_{{\lambda^{(0)}}^\delta}-L_{\lambda^{(0)}-\delta-\epsilon_m}.
\end{equation}
Applying Remark 4.8, we obtain
\begin{equation}
L_{\lambda+\sum_{i=k+1}^{m}\epsilon_i}=L_{\lambda^{(m-k)}}=
M_{\lambda+\sum_{i=k+1}^{m-1}\epsilon_i+\epsilon_m}-
M_{\lambda^\delta+\sum_{i=k+1}^{m-1}\epsilon_i+\epsilon_m}-L_{(\lambda-\delta-\epsilon_k)^\delta+(m-k)\delta}
\end{equation}
Similarly,
\begin{equation}
L_{\lambda+\sum_{i=k+1}^{m-1}\epsilon_i-\epsilon_m}=M_{\lambda+\sum_{i=k+1}^{m-1}\epsilon_i-\epsilon_m}-
M_{\lambda^\delta+\sum_{i=k+1}^{m-1}\epsilon_i-\epsilon_m}-L_{(\lambda-\delta-\epsilon_k)^\delta+(m-k)\delta}.
\end{equation}

Comparing (5.61) with (5.64) and (5.65) takes us to $x_0=0$. Thus
\begin{equation}
L_\lambda=M_\lambda-M_{\lambda^\delta}-L_{\lambda-\delta-\epsilon_k}-L_{(\lambda-\delta-\epsilon_k)^\delta}.
\end{equation}

By the way, we can compute directly by (5.66) that
\begin{equation}
[L_{\lambda}\otimes L_\delta:
L_{\lambda-\epsilon_k}]=[L_{\lambda}\otimes L_\delta:
L_{\lambda-\delta}]=0
\end{equation}

\noindent{\bf 2). For $\lambda_k=2$.}

Now (5.49) can be simplified to
\begin{equation}
L_\lambda+x_1L_{\lambda-\delta-\epsilon_k}+x_2L_{\lambda-2\delta-2\epsilon_k}+x_0L_{(\lambda-2\delta-2\epsilon_k)^\delta}
=M_\lambda-M_{\lambda^\delta}+M_{\lambda-\delta-\epsilon_k}-M_{(\lambda-\delta-\epsilon_k)^\delta},
\end{equation}
where $L_{\lambda-\delta-\epsilon_k}$ can be expressed by (5.66).
Hence
\begin{equation}
L_\lambda=M_\lambda-M_{\lambda^\delta}+(1-x_1)L_{\lambda-\delta-\epsilon_k}
+(1-x_2)L_{\lambda-2\delta-2\epsilon_k}+(1-x_0)L_{(\lambda-2\delta-2\epsilon_k)^\delta}.
\end{equation}
Observe that $\lambda-\delta-2\epsilon_k$ is a typical weight. Thus
if we multiply $L_\delta$ on both sides of (5.69) and choose the
terms in the block corresponding to $\lambda-\delta-2\epsilon_k$,
then we can get $x_2=1$. But $L_{\lambda-2\delta-2\epsilon_k}$ can
not be a submodule or quotient module of
$L_{\lambda-\epsilon_k}\otimes L_\delta$ by Lemma 4.6. It means that
$x_1=2$ (the argument is similar to (5.53),(5.54)). Now (5.69)
becomes to
\begin{equation}
L_\lambda=M_\lambda-M_{\lambda^\delta}-L_{\lambda-\delta-\epsilon_k}
+(1-x_0)L_{(\lambda-2\delta-2\epsilon_k)^\delta}.
\end{equation}
Hence applying Remark 4.8 and using the symmetry between
$\pm\epsilon_m$, we can easily get from (5.70) that
\begin{eqnarray}&&
L_{\lambda+\sum_{i=k+1}^m2\epsilon_i}=M_{\lambda+\sum_{i=k+1}^m2\epsilon_i}
-M_{\lambda^\delta+\sum_{i=k+1}^m2\epsilon_i}\\\nonumber&&
-L_{\lambda-(m-k+1)\delta-\epsilon_k+\sum_{i=k+1}^m\epsilon_i}
+\frac{1-x_0}{2}L_{(m-1)\delta+\sum_{i=1}^{k-1}\lambda_i\epsilon_i+\sum_{i=k}^{m-1}\epsilon_i}
\end{eqnarray}

On the other hand, we can also get
\begin{equation}
L_{\lambda+\sum_{i=k+1}^m2\epsilon_i}=M_{\lambda+\sum_{i=k+1}^m2\epsilon_i}
-M_{\lambda^\delta+\sum_{i=k+1}^m2\epsilon_i}-L_{\lambda-(m-k+1)\delta-\epsilon_k+\sum_{i=k+1}^m\epsilon_i}
\end{equation}
from
\begin{eqnarray}
&&\quad\quad L_{\lambda-(m-k)\delta+\sum_{i=k+1}^m2\epsilon_i}=\\
\nonumber&&M_{\lambda-(m-k)\delta+\sum_{i=k+1}^m2\epsilon_i}
-M_{\lambda^\delta+(m-k)\delta+\sum_{i=k+1}^m2\epsilon_i}
-L_{\lambda-(m-k+1)\delta+\sum_{i=k+1}^{m-1}2\epsilon_i+\epsilon_m},
\end{eqnarray}
which we have got in subsection 5.3.1 before.

Comparing (5.71) with (5.72), we obtain $x_0=1$. Thus (5.70) can be
rewritten as
\begin{equation}
L_{\lambda}= M_{\lambda}-M_{\lambda^\delta}
-L_{\lambda-\delta-\epsilon_k}.
\end{equation}
Also we can compute directly by (5.74) that
\begin{equation}
[L_{\lambda}\otimes L_\delta:
L_{\lambda-\epsilon_k}]=[L_{\lambda}\otimes L_\delta:
L_{\lambda-\delta}]=0.
\end{equation}

\noindent {\bf 3). For any $\lambda_k$.}

We shall use induction on $\lambda_k$ to prove that (5.74) and
(5.75) still hold for any $\lambda_k\geq2$.

By induction assumption, (5.49) can become to
\begin{equation}
L_\lambda=M_\lambda-M_{\lambda^\delta}+\sum_{i=1}^2
(1-x_i)L_{\lambda-i\delta-i\epsilon_k}
-\sum_{i=3}^{\lambda_k}x_iL_{\lambda-i\delta-i\epsilon_k}
-x_0L_{(\lambda-\lambda_k\delta-\lambda_k\epsilon_k)^\delta}.
\end{equation}
As the same as before, If we multiply $L_\delta$ on both sides of
(5.76) and fetch the terms in the block corresponding to
$\lambda-i\delta-(i+1)\epsilon_k$, ($1\leq i\leq\lambda_k-1$). Then
we can obtain that $x_2=1$, $x_3=\cdots=x_{\lambda_k}=0$. Since
$x_2=1\neq0$, it must be that $x_1=2$ (the argument is also similar
to (5.53), (5.54)). Thus
\begin{equation}
L_\lambda=M_\lambda-M_{\lambda^\delta}-L_{\lambda-\delta-\epsilon_k}
-x_0L_{(\lambda-\lambda_k\delta-\lambda_k\epsilon_k)^\delta}.
\end{equation}
Now imitating (5.71)-(5.73), we can show that $x_0=0$.

Therefore (5.74) and (5.75) still hold for any $\lambda_k\geq2$.

\subsubsection{\bf The case that $\lambda$ is
$(\delta+\epsilon_k)$-atypical} Thanks to Remark 4.8, now there is
no difficulty for us to get
$L_{(2m-k+\lambda_k-1)\delta+\sum_{i=1}^{m}\lambda_i\epsilon_i}$
from
$L_{(2m-k+\lambda_k-1)\delta+\sum_{i=1}^{k}\lambda_i\epsilon_i}$. We
shall state the theorem in section 5.5 together with the case
$\mathfrak{g}=\mathfrak{osp}(2m+1|2)$.

\subsection{When $\mathfrak{g}=\mathfrak{osp}(2m+1|2)$ and $\lambda^\delta\not\in\mathcal {P}$}
The atypical weight $\lambda$ is $(\delta+\epsilon_i)$-atypical for
certain $1\leq i\leq m$.

\subsubsection{\bf The case that $\lambda$ is
$(\delta+\epsilon_k)$-atypical with
$\lambda_{k+1}=\lambda_{k+2}=\cdots=\lambda_{m}=0$ ($1\leq k\leq
m$)}

It must be that $\lambda_0=m+\lambda_k-k$, $\lambda_k>0$ and
$\lambda_{k+1}=\lambda_{k+2}=\cdots=\lambda_{m}=0$. It can be
checked easily that $\mu=\lambda-\epsilon_k\in\mathcal {P}$ is
typical. Hence
\begin{equation}
L_\mu=M_\mu-M_{\mu^\delta}.
\end{equation}

By Lemma 4.5,
\begin{equation}
[L_\mu\otimes L_\delta: L_\lambda]=1.
\end{equation}
Assume
\begin{equation}
[L_\mu\otimes L_\delta: L_{\lambda-i(\delta+\epsilon_k)}]=x_i,
\quad(i=1,2,\ldots,\lambda_k)
\end{equation}
and
\begin{equation}
[L_\mu\otimes L_\delta:
L_{(\lambda-\lambda_k\delta-\lambda_k\epsilon_k)^\delta}]=x_0.
\end{equation}
Thus if we multiply $L_\delta$ on both sides of (5.78) and fetch the
terms $L_\nu$ and $M_\nu$ with $\nu\sim\lambda$, then we get
\begin{equation}
L_\lambda+\sum_{i=1}^{\lambda_k}x_iL_{\lambda-i(\delta+\epsilon_k)}
+x_0L_{(\lambda-\lambda_k\delta-\lambda_k\epsilon_k)^\delta}=
M_\lambda+M_{\lambda-\delta-\epsilon_k}-M_{\lambda^\delta}-M_{(\lambda-\delta-\epsilon_k)^\delta}.
\end{equation}

Since $\mu$ is typical, we have that
\begin{equation}
[L_{\lambda-\delta-\epsilon_k}\otimes L_\delta: L_\mu]=1
\end{equation} by Lemma 4.5, and have that
\begin{equation}
[L_{\lambda-i(\delta+\epsilon_k)}\otimes L_\delta:
L_\mu]=[L_{(\lambda-\delta-\epsilon_k)^\delta}\otimes L_\delta:
L_\mu]=0,\quad(i=2,3,\ldots,\lambda_k)
\end{equation} by Lemma 4.6.

It is easy to calculate that
\begin{equation}
[(M_\lambda+M_{\lambda-\delta-\epsilon_k}-M_{\lambda^\delta}-M_{(\lambda-\delta-\epsilon_k)^\delta})\otimes
L_\delta : L_\mu]=2.
\end{equation}

So combining (5.82)-(5.85), there comes
\begin{equation}
x_1\leq[L_\lambda\otimes L_\delta: L_\mu]+x_1=2.
\end{equation}

Moreover, since
\begin{equation}
\dim\mbox{Hom}_{\mathfrak{g}}(L_\mu\otimes
L_\delta,L_{\mu-\delta})=\dim\mbox{Hom}_{\mathfrak{g}}(L_\mu,L_{\mu-\delta}\otimes
L_\delta)=1,
\end{equation} it should be that $x_1>0$.

\noindent{\bf 1). For $\lambda_k=1$.}

Now (5.82) becomes to
\begin{equation}
L_\lambda+x_1L_{\lambda-\delta-\epsilon_k}
+x_0L_{(\lambda-\delta-\epsilon_k)^\delta}=
M_\lambda+M_{\lambda-\delta-\epsilon_k}-M_{\lambda^\delta}-M_{(\lambda-\delta-\epsilon_k)^\delta}.
\end{equation}

Notice that by Theorem 5.3,
\begin{equation}
L_{\lambda-\delta-\epsilon_k}=M_{\lambda-\delta-\epsilon_k}-
M_{(\lambda-\delta-\epsilon_k)^\delta}+L_{(\lambda-\delta-\epsilon_k)^\delta}.
\end{equation}
By (5.88) and (5.89),
\begin{equation}
L_\lambda=M_\lambda-M_{\lambda^\delta}+(1-x_1)L_{\lambda-\delta-\epsilon_k}-
(x_0+1)L_{(\lambda-\delta-\epsilon_k)^\delta}.
\end{equation}

Using the method introduced in Remark 4.8, we can get from (5.90)
that
\begin{eqnarray} &&
L_{\lambda+\sum_{i=k+1}^{m}\epsilon_i}=
M_{\lambda+\sum_{i=k+1}^{m}\epsilon_i}
-M_{(\lambda+\sum_{i=k+1}^{m}\epsilon_i)^\delta}\\\nonumber&&\quad\quad\quad
+(1-x_1)L_{\lambda-(m-k+1)\delta-\epsilon_k}
-(x_0+1)L_{(\lambda-(m-k+1)\delta-\epsilon_k)^\delta}.
\end{eqnarray}
If we multiply $L_\delta$ on both sides of (5.91) and select the
terms $L_\nu$ and $M_\nu$ with $\nu\sim\lambda$, then we get
\begin{eqnarray} &&
L_{\lambda+\sum_{i=k+1}^{m}\epsilon_i}=
M_{\lambda+\sum_{i=k+1}^{m}\epsilon_i}
-M_{(\lambda+\sum_{i=k+1}^{m}\epsilon_i)^\delta}\\\nonumber&&\quad\quad\quad
+(1-x_1)L_{(\lambda-(m-k+1)\delta-\epsilon_k)^\delta}
-(x_0+1)L_{\lambda-(m-k+1)\delta-\epsilon_k}.
\end{eqnarray}
Comparing (5.91) with (5.92) shows us that
\begin{equation}
x_1-1=x_0+1\quad \Rightarrow \quad x_1=2+x_0\geq 2.
\end{equation}
Inequalities (5.86) and (5.93) induce that $x_1=2$ and $x_0=0$.

Hence
\begin{equation}
L_\lambda=M_\lambda-M_{\lambda^\delta}-L_{\lambda-\delta-\epsilon_k}
-L_{(\lambda-\delta-\epsilon_k)^\delta}.
\end{equation}
By the way, we have got
\begin{equation}
[L_\lambda\otimes L_\delta: L_{\lambda-\delta}]=[L_\lambda\otimes
L_\delta: L_{\lambda-\epsilon_k}]=0.
\end{equation}

\noindent{\bf 2). For $\lambda_k=2$.}

Now (5.82) becomes to
\begin{equation}
L_\lambda+x_1L_{\lambda-\delta-\epsilon_k}+x_2L_{\lambda-2\delta-2\epsilon_k}
+x_0L_{(\lambda-2\delta-2\epsilon_k)^\delta}=
M_\lambda+M_{\lambda-\delta-\epsilon_k}-M_{\lambda^\delta}-M_{(\lambda-\delta-\epsilon_k)^\delta},
\end{equation} where $L_{\lambda-\delta-\epsilon_k}$ can be
expressed as form (5.94). Hence
\begin{equation}
L_\lambda=M_\lambda-M_{\lambda^\delta}+(1-x_1)L_{\lambda-\delta-\epsilon_k}
+(1-x_2)L_{\lambda-2\delta-2\epsilon_k}+(1-x_0)L_{(\lambda-2\delta-2\epsilon_k)^\delta}.
\end{equation}

Using the method introduced in Remark 4.8 again, we can get from
(5.97) that
\begin{eqnarray} \ &&
L_{\lambda+\sum_{i=k+1}^{m}\epsilon_i}=
M_{\lambda+\sum_{i=k+1}^{m}\epsilon_i}
-M_{(\lambda+\sum_{i=k+1}^{m}\epsilon_i)^\delta}+(1-x_1)L_{\lambda-\delta-\epsilon_k+\sum_{i=k+1}^{m}\epsilon_i}\\\nonumber&&\quad\quad\quad
+(1-x_2)L_{\lambda-(m-k+2)\delta-2\epsilon_k}
+(1-x_0)L_{(\lambda-(m-k+2)\delta-2\epsilon_k)^\delta}.
\end{eqnarray}

Multiply $L_\delta$ on the both sides of (5.98) and select the terms
$L_\nu$ and $M_\nu$ with $\nu\sim\lambda$, then we have
\begin{eqnarray}\ &&
L_{\lambda+\sum_{i=k+1}^{m}\epsilon_i}=
M_{\lambda+\sum_{i=k+1}^{m}\epsilon_i}
-M_{(\lambda+\sum_{i=k+1}^{m}\epsilon_i)^\delta}+(1-x_1)L_{\lambda-\delta-\epsilon_k+\sum_{i=k+1}^{m}\epsilon_i}\\\nonumber&&\quad\quad\quad
+(1-x_2)L_{(\lambda-(m-k+2)\delta-2\epsilon_k)^\delta}
+(1-x_0)L_{\lambda-(m-k+2)\delta-2\epsilon_k}.
\end{eqnarray}
Compare (5.98) and (5.99), then we get $x_2=x_0$.

Now we begin to determine $x_2$. Notice that
$\lambda-\delta-2\epsilon_k$ is a typical weight, hence
\begin{equation}
[L_\lambda\otimes L_\delta:
L_{\lambda-\delta-2\epsilon_k}]=[L_{(\lambda-2\delta-2\epsilon_k)^\delta}\otimes
L_\delta:L_{\lambda-\delta-2\epsilon_k}]=0
\end{equation} by Lemma 4.6,
\begin{equation}
[(M_\lambda-M_{\lambda^\delta})\otimes
L_\delta:L_{\lambda-\delta-2\epsilon_k}]=0
\end{equation} by direct calculation,
\begin{equation}
[L_{\lambda-\delta-\epsilon_k}\otimes
L_\delta:L_{\lambda-\delta-2\epsilon_k}]=0
\end{equation} by (5.95), and
\begin{equation}
[L_{\lambda-2\delta-2\epsilon_k}\otimes
L_\delta:L_{\lambda-\delta-2\epsilon_k}]=1
\end{equation} by Lemma 4.5.

Therefore (5.100)-(5.103) induce that $x_2=1$ (hence $x_0=1$, too).

We have known that $x_1=1$ or $2$. But it is impossible that $x_1=1$
because of Corollary 2.5. So it must be that $x_1=2$ and
\begin{equation}
L_\lambda=M_\lambda-M_{\lambda^\delta}-L_{\lambda-\delta-\epsilon_k}.
\end{equation}
It is easy to get from (5.104) that
\begin{equation}
[L_\lambda\otimes L_\delta: L_{\lambda-\delta}]=[L_\lambda\otimes
L_\delta: L_{\lambda-\epsilon_k}]=0.
\end{equation}

\noindent{\bf 3). For any $\lambda_k$.}

We shall use induction on $\lambda_m$ to show that (5.104) and
(5.105) also hold for any $\lambda_k\geq2$.

By induction assumption and (5.82),
\begin{equation}
L_\lambda=M_\lambda-M_{\lambda^\delta}+\sum_{i=1}^2(1-x_i)L_{\lambda-i\delta-i\epsilon_k}-
\sum_{i=3}^{\lambda_k}x_iL_{\lambda-i\delta-i\epsilon_k}
-x_0L_{(\lambda-\lambda_k\delta-\lambda_k\epsilon_k)^\delta}.
\end{equation}
Multiply $L_\delta$ on both sides of (5.106). Then if we choose the
terms in the block corresponding to
$\lambda-i\delta-(i+1)\epsilon_k$ ($1\leq i\leq \lambda_k-1$), there
comes $x_2=1$, $x_3=\cdots=x_{\lambda_k}=0$. Imitating
(5.91)-(5.93), we can get $x_0=x_{\lambda_k}=0$.

Thus
\begin{equation}
L_\lambda=M_\lambda-M_{\lambda^\delta}+(1-x_1)L_{\lambda-\delta-\epsilon_k}.
\end{equation}
Recall that we have known $x_1=1$ or $2$.  Thanks to Corollary 2.5,
it should be that $x_1=2$. Hence we get that (5.104) and (5.105)
still hold for any $\lambda_k\geq2$.

\subsubsection{\bf The case that $\lambda$ is
$(\delta+\epsilon_k)$-atypical} Thanks to Remark 4.8, now there is
no difficulty for us to get
$L_{(2m-k+\lambda_k-1)\delta+\sum_{i=1}^{m}\lambda_i\epsilon_i}$
from
$L_{(2m-k+\lambda_k-1)\delta+\sum_{i=1}^{k}\lambda_i\epsilon_i}$. We
shall state the theorem in the next section together with the case
$\mathfrak{g}=\mathfrak{osp}(2m|2)$.

\subsection{Final results for the case of $\lambda^\delta\not\in\mathcal {P}$}
For any
$\lambda=\lambda_0\delta+\sum_{i=1}^{m}\lambda_i\epsilon_i\in\mathcal
{P}$, if it is $(\delta+\epsilon_k)$-atypical and $\lambda_k\neq0$,
then we define
\begin{equation}
\varphi(\lambda)=\left\{\begin{array}{lc}\lambda-(m-k+1)\delta-\sum_{i=k}^{m-1}\epsilon_i+\epsilon_m
,& \mbox{if $\mathfrak{g}=\mathfrak{osp}(2m|2)$ and
$\lambda_m=-\lambda_k$;}
\\\lambda-(t-k+1)\delta-\sum_{i=k}^t\epsilon_i, & \mbox{otherwise,} \end{array}\right.
\end{equation} where $t\geq k$ is the maximal number such that
$|\lambda_t|=\lambda_k$.

when $\mathfrak{g}=\mathfrak{osp}(2m|2)$, if $\lambda$ is
$(\delta-\epsilon_m)$-atypical and $\lambda_m\neq0$, we can also
define
\begin{equation}
\varphi(\lambda)=\lambda-\delta+\epsilon_k.
\end{equation}

It is easy to show that $\lambda\sim\varphi(\lambda)\in\mathcal
{P}$.

Denote by $\theta_\lambda$ the maximal integer such that
$\varphi^{\theta_\lambda}(\lambda)$ is well defined. It is clear
that $\varphi^{\theta_\lambda}(\lambda)$ and
$\varphi^{\theta_\lambda}(\lambda)^\delta$ are both in $\mathcal
{P}$. Denote $\lambda^T=\varphi^{\theta_\lambda}(\lambda)^\delta$
(hence $\varphi^{\theta_\lambda}(\lambda)=(\lambda^T)^\delta$) and
call it a \emph{tail typical} weight.

Using the method introduced in Remark 4.8, we can complete our
argument in subsection 5.3 and 5.4 by the above notations. To
summarize:
\begin{theorem}
For any atypical weight $\lambda\in\mathcal {P}$ with
$\lambda^\delta\not\in\mathcal {P}$,
\begin{equation}
L_\lambda=\left\{
\begin{array}{ll}
M_\lambda-M_{\lambda^\delta}-L_{\varphi(\lambda)},& \mbox{if
$\varphi(\lambda)^\delta\not\in\mathcal {P}$ or $\varphi(\lambda)=\varphi(\lambda)^\delta$}\\
M_\lambda-M_{\lambda^\delta}-L_{\varphi(\lambda)}-L_{\varphi(\lambda)^\delta},
&\mbox{if $\varphi(\lambda)^\delta\in\mathcal {P}$ and
$\varphi(\lambda)\neq\varphi(\lambda)^\delta$}.
\end{array}\right.
\end{equation}
More precisely, if $\mathfrak{g}=\mathfrak{osp}(2m|2)$ and
${(\lambda^T)}_0=m-1$ (i.e.
$\varphi^{\theta_\lambda}(\lambda)=\lambda^T$), then
\begin{eqnarray}
L_\lambda&=&\sum_{i=0}^{\theta_\lambda-1}(-1)^i(M_{\varphi^i(\lambda)}-M_{\varphi^i(\lambda)^\delta})
+(-1)^{\theta_\lambda}M_{\lambda^T}
\\\nonumber&&
+\sum_{q=(\lambda^T)_{1}+1}^{\infty}(-1)^{\theta_\lambda+q}
(M_{(\lambda^T)^{0,q}} +M_{(\lambda^T)^{0,q}_{-}})+\\\nonumber
&&\sum_{j=1}^k\sum_{q=(\lambda^T)_{j+1}+1}^{(\lambda^T)_j}(-1)^{\theta_\lambda+q}
(M_{(\lambda^T)^{j,q}} +M_{(\lambda^T)^{j,q}_{-}});
\end{eqnarray}
otherwise,
\begin{eqnarray}
L_\lambda=\sum_{i=0}^{\theta_\lambda-1}(-1)^i(M_{\varphi^i(\lambda)}-M_{\varphi^i(\lambda)^\delta})
+(-1)^{\theta_\lambda}(M_{\lambda^T}+M_{(\lambda^T)^\delta})+\\\nonumber
2\sum_{q=(\lambda^T)_{1}+1}^{\infty}(-1)^{\theta_\lambda+q}M_{(\lambda^T)^{0,q}}+
2\sum_{j=1}^k\sum_{q=(\lambda^T)_{j+1}+1}^{(\lambda^T)_j}(-1)^{\theta_\lambda+q}M_{(\lambda^T)^{j,q}}.
\end{eqnarray}
\end{theorem}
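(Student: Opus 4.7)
The plan is to assemble Theorem 5.5 from the recursions painstakingly established in Sections 5.3 and 5.4, together with the ``tail typical'' base case coming from Theorems 5.2 and 5.3, using Remark 4.8 as the bridging device.

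First, I would check that $\varphi$ is well-defined on atypical $\lambda\in\mathcal{P}$ with $\lambda^\delta\not\in\mathcal{P}$, and that $\varphi(\lambda)\sim\lambda$ and $\varphi(\lambda)\in\mathcal{P}$. This is a bookkeeping check: the formula for $\varphi(\lambda)$ subtracts an explicit sum of odd roots of the form $\delta+\epsilon_i$ (or $\delta-\epsilon_m$) which are all orthogonal to $\lambda+\rho$, so iterated application of Lemma 2.2 forces $\varphi(\lambda)\sim\lambda$; dominance is inherited because the subtraction peels off coordinates of $\lambda$ starting from the largest index at which the maximum value is attained. Since $\varphi$ strictly drops the height, the iterate $\varphi^{\theta_\lambda}(\lambda)$ exists after finitely many steps and lies in the ``borderline'' set where its $\delta$-reflection belongs to $\mathcal{P}$.

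Next, I would derive the one-step recursion (5.110). The cases where $\lambda$ is $(\delta+\epsilon_m)$-, $(\delta-\epsilon_m)$-, or $(\delta+\epsilon_k)$-atypical with $\lambda_{k+1}=\cdots=\lambda_m=0$ have already been treated in \S5.3--5.4 by induction on $\lambda_k$, yielding exactly the two alternatives of (5.110). For a general atypical $\lambda$ with some $\lambda_{k+1},\ldots,\lambda_m$ nonzero, I would invoke Remark 4.8: starting from the truncated weight $\lambda'=\lambda_0\delta+\sum_{i=1}^{k}\lambda_i\epsilon_i$ (for which the formula is known), one climbs back to $\lambda$ by tensoring repeatedly with $L_\delta$; at each stage Lemma 4.3, Corollary 4.7 and Lemma 4.5 pick out the single relevant summand in the block of $\lambda$, and the terms $M_\nu-M_{\nu^\delta}$ and $L_{\varphi(\nu)}$ transform by the same $\otimes L_\delta$-rule. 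Matching the surviving terms reproduces (5.110) for $\lambda$ itself.

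Finally, I would iterate (5.110) $\theta_\lambda$ times. The ``$M_\lambda-M_{\lambda^\delta}$'' parts telescope into $\sum_{i=0}^{\theta_\lambda-1}(-1)^i(M_{\varphi^i(\lambda)}-M_{\varphi^i(\lambda)^\delta})$, and at the bottom $\varphi^{\theta_\lambda}(\lambda)$ is either $(\lambda^T)^\delta$ with $\lambda^T\in\mathcal{P}$ (the dichotomy comes down to whether $(\lambda^T)_0=m-1$ for $\mathfrak{osp}(2m|2)$ or not). Substituting the known expression for $L_{\lambda^T}$ from Theorem 5.3 into the recursion yields (5.112), while Theorem 5.2 --- which in the $(\lambda^T)_0=m-1$ case gives a double sum over both $\mu^{j,q}$ and $\mu^{j,q}_-$ --- yields (5.111).

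The main obstacle will be the second branch of (5.110), where $\varphi(\lambda)^\delta\in\mathcal{P}$ and $\varphi(\lambda)\neq\varphi(\lambda)^\delta$: one must show that both $L_{\varphi(\lambda)}$ and $L_{\varphi(\lambda)^\delta}$ appear with coefficient one in $L_\lambda$. This is precisely the delicate ``jump'' moment when the recursion first crosses into the $\lambda^\delta\in\mathcal{P}$ regime, and is the reason \S5.3 computes the multiplicities $x_i$ so carefully via a comparison between the $L_\delta$-iteration and Remark 4.8 (cf.\ (5.61)--(5.65)); one verifies directly that the ``extra'' term $L_{\varphi(\lambda)^\delta}$ must be present by cross-checking the character obtained by two different paths to the same irreducible module. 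Once this is in hand, the telescoped formulas (5.111) and (5.112) follow by straightforward induction on $\theta_\lambda$.
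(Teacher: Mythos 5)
Your proposal is correct and follows essentially the same route as the paper: Theorem 5.5 is indeed just the telescoped summary of the one-step recursions established case by case in \S5.3--5.4 (with the multiplicities $x_0,x_1,\ldots$ pinned down by comparing the $\otimes L_\delta$-iteration against the Remark 4.8 path), extended to general atypical weights via Remark 4.8 and closed off at the tail typical weight by substituting Theorems 5.2 and 5.3. You also correctly single out the genuinely delicate point, namely showing that both $L_{\varphi(\lambda)}$ and $L_{\varphi(\lambda)^\delta}$ occur with coefficient one when the recursion crosses into the $\lambda^\delta\in\mathcal{P}$ regime, which is exactly where the paper expends its effort.
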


\subsection{Decomposition of $L_\lambda\otimes L_\delta$}

In our arguments before, we also built up enough information about
the tensor module $L_\lambda\otimes L_\delta$. As a co-product, its
decomposition has been obtained.
\begin{corollary}
a) For any atypical weight $\lambda\in \mathcal {P}$, the tensor
module $L_\lambda\otimes L_\delta$ is completely reducible.
Precisely, if there exists $1\leq i\leq m$ such that
$(\lambda+\rho,\delta+\epsilon_i)=0$ and
$\lambda-(\delta+\epsilon_i)\in \mathcal {P}$ , then
\begin{equation}
L_\lambda\otimes L_\delta=\bigoplus_{\mu\in
P^+_\lambda\setminus\{\lambda-\delta,\lambda-\epsilon_i\}}L_\mu.
\end{equation}
When $\mathfrak{g}=\mathfrak{osp}(2m|2)$, if
$(\lambda+\rho,\delta-\epsilon_m)=0$ and
$\lambda-(\delta-\epsilon_m)\in \mathcal {P}$ , then
\begin{equation}
L_\lambda\otimes L_\delta=\bigoplus_{\mu\in
P^+_\lambda\setminus\{\lambda-\delta,\lambda+\epsilon_m\}}L_\mu.
\end{equation}
Otherwise,
\begin{equation}
L_\lambda\otimes L_\delta=\bigoplus_{\mu\in \mathcal
{P}_\lambda}L_\mu.
\end{equation}

b) For any typical weight $\lambda\in \mathcal {P}$, the tensor
module $L_\lambda\otimes L_\delta$ can be written as
\begin{equation}
L_\lambda\otimes L_\delta=\sum_{\mu\in \mathcal
{P}_\lambda}\left(a_\mu
L_\mu+{a_{\varphi(\mu)}L_{\varphi(\mu)}}+a_{\varphi(\mu)^{\delta}}L_{\varphi(\mu)^{\delta}}\right),
\end{equation}
where $a_\mu=2$ if there exists $\mu'\in \mathcal {P}_\lambda$ with
$\mu'\succ \mu$ and $a_\mu=1$ otherwise, $a_{\varphi(\mu)}=1$ (resp.
$a_{\varphi(\mu)^{\delta}}=1$) if $a_\mu=2$,
$\varphi(\mu)\in\mathcal {P}$ (resp. $\varphi(\mu)^\delta\in\mathcal
{P}$) and $a_{\varphi(\mu)}=0$ (resp. $a_{\varphi(\mu)^{\delta}}=0$)
otherwise. furthermore, there are at most two $\mu$'s with
$a_\mu=2$. When the $a_\mu$'s are all equal to $1$, the tensor
module is completely reducible.
\end{corollary}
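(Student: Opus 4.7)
The plan is to handle the atypical and typical cases separately, in each case applying Lemma 4.3 to compute $M_\nu \otimes L_\delta$ and then using Theorem 5.5 or Corollary 2.5 to convert between generalized Verma characters and irreducible characters.

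For part (a), where $\lambda$ is atypical, Corollary 4.7 already exhibits each atypical $\mu \in \mathcal{P}_\lambda$ as a multiplicity-one direct summand of $L_\lambda \otimes L_\delta$, and Lemma 4.5 handles the typical $\mu \in \mathcal{P}_{\lambda^+}$. For the typical weights in $\mathcal{P}_{\lambda^-}$, compare $\mathrm{ch}(L_\lambda \otimes L_\delta)$, expanded via Lemma 4.3 on the formula of Theorem 5.5, against the sum $\sum_\mu \mathrm{ch}\,L_\mu$; any missing summand would be detected by this character count. The explicit exclusion of $\lambda - \delta$ and $\lambda - \epsilon_i$, or $\lambda + \epsilon_m$ in the $(\delta - \epsilon_m)$-atypical $\mathfrak{osp}(2m|2)$ case, is already recorded in the vanishing identities (5.40), (5.46), (5.67), (5.75), (5.95), and (5.105). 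Complete reducibility then follows from the exact match of characters.

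For part (b), Corollary 2.5 writes $\mathrm{ch}\,L_\lambda$ as $\mathrm{ch}\,M_\lambda - \mathrm{ch}\,M_{\lambda^\delta}$ (or $\mathrm{ch}\,M_\lambda$ alone, depending on whether $\lambda^\delta \in \mathcal{P}$). Applying Lemma 4.3 to both factors and pairing terms via the identifications $(\mu \pm \epsilon_i)^\delta = \mu^\delta \pm \epsilon_i$ and $(\mu \pm \delta)^\delta = \mu^\delta \mp \delta$ regroups the Verma contributions as a sum of pairs $M_\mu - M_{\mu^\delta}$ over $\mu \in \mathcal{P}_\lambda$. If $\mu$ is typical the pair is $L_\mu$ by Corollary 2.5; if $\mu$ is atypical, Theorem 5.5 converts the pair into $L_\mu$ together with the extras $L_{\varphi(\mu)}$ and possibly $L_{\varphi(\mu)^\delta}$. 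The doubling $a_\mu = 2$ arises precisely when some $\mu' \in \mathcal{P}_\lambda$ with $\mu' \succ \mu$ contributes a second $M_\mu$ (through the identification $(\mu')^\delta = \mu$), and Lemma 4.4 bounds the number of such $\mu$ by two; in the absence of any doubling, the weights of $\mathcal{P}_\lambda$ lie in pairwise distinct blocks, so $L_\lambda \otimes L_\delta$ is a genuine direct sum.

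I expect the main obstacle to be the case-by-case bookkeeping in (b): attributing each term in the signed Verma expansion to a definite irreducible summand ($L_\mu$, $L_{\varphi(\mu)}$, or $L_{\varphi(\mu)^\delta}$), and verifying that the coefficients match the asserted values of $a_\mu$ and $a_{\varphi(\mu)}$, especially when some of the weights $\mu^\delta$, $\varphi(\mu)$, or $\varphi(\mu)^\delta$ fail to lie in $\mathcal{P}$. The underlying combinatorics is controlled by the detailed analyses already carried out in subsections 5.3 and 5.4, so the argument reduces to a finite check driven by the typicality of each $\mu \in \mathcal{P}_\lambda$ and by which of the associated dualized weights remain dominant integral.
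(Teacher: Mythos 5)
Your proposal is correct and follows essentially the same route as the paper, which offers no separate argument for this corollary beyond observing that it is a by-product of the computations already carried out (Lemmas 4.3--4.6, Corollary 4.7, Remark 5.4, and the multiplicity and vanishing identities such as (5.6), (5.40), (5.46), (5.67), (5.75), (5.95), (5.105)) --- precisely the ingredients you assemble. The one phrase to tighten is ``complete reducibility then follows from the exact match of characters'': a character identity alone cannot exclude non-split extensions, and the splitting really comes from Corollary 4.7, Lemma 4.5 and the adjunction argument of Remark 5.4 exhibiting each constituent as a direct summand, after which the character count shows that nothing else remains.
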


\section*{Acknowledgement} Part of the work was done when the author
visited the University of Sydney from January to June, 2009. He
would like to express his deep thanks to Prof. Ruibin Zhang, without
whom the work can not be finished. He is also indebted to the
Academy of Mathematics and Systems Science, Chinese Academy of
Sciences for its financial support during that time.

\bibliographystyle{amsplain}

\end{document}